\tikzset{
commutative diagrams/.cd,
arrow style=tikz,
diagrams={>=latex}}
\theoremstyle{plain} \newtheorem{theorem}{Theorem}[section]
\theoremstyle{plain} \newtheorem{lem}[theorem]{Lemma}
\theoremstyle{plain} \newtheorem{prop}[theorem]{Proposition}
\theoremstyle{plain} \newtheorem{cor}[theorem]{Corollary}
\theoremstyle{definition} 
\theoremstyle{definition}\newtheorem{ex}[theorem]{Example}
\theoremstyle{definition}\newtheorem{comm}[theorem]{Comment}
\newtheorem*{theorem*}{Theorem}
\newtheorem*{definition*}{Definition}
\newtheorem*{lem*}{Lemma}
\newcommand{\thmref}[1]{Theorem~\ref{#1}}
\newcommand{\propref}[1]{Proposition~\ref{#1}}
\newcommand{\ppropref}[1]{Prop~\ref{#1}}
\newcommand{\lemref}[1]{Lemma~\ref{#1}}
\numberwithin{equation}{section}
\renewcommand{\t}{\tau}
\newcommand{\ul}{\underline}
\newcommand{\mto}{\mapsto}
\newcommand{\x}{\times}
\newcommand{\ox}{\otimes}
\newcommand{\aand}{\mbox{and}}
\newcommand{\ffor}{\mbox{for}}
\newcommand{\iin}{\mbox{in}}
\renewcommand{\to}{\rightarrow}
\newcommand{\too}{\longrightarrow}
\newcommand{\Too}{\Longrightarrow}
\newcommand{\vtwo}{\vskip 2mm}
\newcommand{\vthree}{\vskip 3mm}
\newcommand{\vfour}{\vskip 4mm}
\newcommand{\hthree}{\hskip 3mm}
\newcommand{\hfive}{\hskip 5mm}
\newcommand{\hten}{\hskip 5mm}
\renewcommand{\r}{\right}
\renewcommand{\l}{\left}
\newcommand{\<}{\subset}
\newcommand{\ii}{^{-1}}
\newcommand{\pr}{^{\prime}}
\newcommand{\s}{\sigma}
\renewcommand{\a}{\alpha}
\renewcommand{\b}{\beta}
\newcommand{\la}{\lambda}
\newcommand{\om}{\omega}
\renewcommand{\L}{\Lambda}
\renewcommand{\d}{\delta}
\newcommand{\oo}{\infty}
\newcommand{\pd}{\partial}
\newcommand{\ch}{\textsf{ch}}
\renewcommand{\c}{\textsf{c}}
\newcommand{\p}{\textsf{p}}
\newcommand{\sw}{\textsf{sw}}
\newcommand{\cf}{\textsf{cf}}
\newcommand{\pf}{\textsf{cf}}
\newcommand{\swf}{\textsf{cf}}
\newcommand{\Ahat}{\widehat{\textsf{A}}}
\newcommand{\Td}{\textsf{Td}}
\newcommand{\Ker}{\textmd{\small {\rm Ker}}\,}
\newcommand{\Si}{\Sigma}
\newcommand{\rk}{\mathrm{rk}}
\newcommand{\Ss}{\mathcal{S}} 
\newcommand{\Ee}{\mathcal{E}}     
\newcommand{\Vv}{\mathcal{V}}    
\newcommand{\Oo}{\mathcal{O}}
\newcommand{\Z}{\mathbb{Z}}                            
\newcommand{\R}{\mathbb{R}}   
\newcommand{\C}{\mathbb{C}}   
\newcommand{\N}{\mathbb{N}} 
\newcommand{\Q}{\mathbb{Q}} 
\newcommand{\E}{\mathbb{E}}
\newcommand{\Sbb}{\mathbb{S}}
\newcommand{\Gl}{\mathrm{Gl}} 
\newcommand{\hh}{\mathbb{h}}
\newcommand{\kk}{\mathbb{k}}
\renewcommand{\x}{\times} 
\newcommand{\Det }{\mathrm{Det}\,}
\newcommand{\etaSOX}{\mathlarger{\mathlarger{\eta}}\mbox{{\small SO}}_{\mbox{\tiny X}}}
\newcommand{\etaSOXq}{\mathlarger{\mathlarger{\eta}}\mbox{{\small SO}}_{\mbox{\tiny X,q}}}
\newcommand{\nuX}{\nu_{\mbox{\tiny X}}}
\newcommand{\etaaX}{\mathlarger{\mathlarger{\mathlarger{\eta}}}\mbox{{\small O}}_{\mbox{\tiny X}}}
\newcommand{\etaaSOX}{\mathlarger{\mathlarger{\mathlarger{\eta}}} \mbox{{\small SO}}_{\mbox{\tiny X}}}
\newcommand{\etaaUXq}{\mathlarger{\mathlarger{\mathlarger{\eta}}} \mbox{{\small U}}_{\mbox{\tiny X,q}}}
\renewcommand{\H}{\mbox{{\small {\rm H}}}}
\newcommand{\K}{\mbox{{\small {\rm K}}}}
\newcommand{\KO}{\mbox{{\small {\rm KO}}}}
\newcommand{\Hss}{{\scaleto{{\rm {\bf \, H}}}{3.25pt}}}
\newcommand{\Hsss}{{\scaleto{{\rm {\bf \, H\Z_2}}}{3.25pt}}}
\newcommand{\Kss}{{\scaleto{{\rm {\bf \, K}}}{3.25pt}}}
\newcommand{\KOss}{{\scaleto{{\rm {\bf \, KO}}}{3.25pt}}}
\newcommand{\MSp}{\mbox{{\small {\rm MSp}}}}
\newcommand{\MSO}{\mbox{{\small {\rm MSO}}}}
\newcommand{\MU}{\mbox{{\small  {\rm MU}}}}
\newcommand{\MO}{\mbox{{\small {\rm MO}}}}
\newcommand{\BSO}{\mbox{{\small {\rm BSO}}}}
\newcommand{\BU}{\mbox{{\small {\rm BU}}}}
\newcommand{\mso}{{\scaleto{{\rm {\bf \,MSO}}}{3.25pt}}}
\newcommand{\muu}{{\scaleto{{\rm {\bf \,MU}}}{3.25pt}}}
\newcommand{\mo}{{\scaleto{{\rm {\bf \,MO}}}{3.25pt}}}
\newcommand{\so}{{\scaleto{{\rm {\bf SO}}}{3.25pt}}}%{\mbox{{\tiny {\rm SO}}}}
\newcommand{\uu }{{\scaleto{{\rm {\bf U}}}{3.25pt}}}%{\mbox{{\tiny {\rm U}}}}
\newcommand{\xX}{\times_{\mbox{\tiny X}}}
\newcommand{\uX}{\sqcup_{\mbox{\tiny X}}}
\newcommand{\Tx}{T_{\mathsmaller{X}}}
\newcommand{\Tm}{T_{\mathsmaller{M}}}
\newcommand{\Tn}{T_{\mathsmaller{N}}}
\newcommand{\Tw}{T_{\mathsmaller{W}}}
\newcommand{\Tmn}{T_{\mathsmaller{M\times_{\mathsmaller{{\tiny X}}}N}}}
\newcommand{\MxN}{M\times_{\mathsmaller{X}}N}
\newcommand{\MxY}{M\times_{\mathsmaller{X}}Y}
\newcommand{\MxMpr}{M\times_{\mathsmaller{X}}M\pr}
\newcommand{\sww}{{\rm sw_1}}
\newcommand{\MX}{\mbox{$\mathsmaller{M/X}$}}
\begin{document}

\title{Vertical Genera}

\author{Niccol\`{o} Salvatori and Simon Scott}
\date{}

%\begin{abstract}
%Generalised characteristic classes are constructed for bordism cohomology which provide a natural extension of classical genera to bordism cohomology rings taking values in singular cohomology. 
%\end{abstract}

\maketitle

\section{Introduction}

In this paper we extend the classical constructions of genera on a compact boundaryless manifold, such as the signature, $\hat{A}$-genus and Todd genus, to the case of families of manifolds parametrised by a space $X$.\\

To do so requires the construction of certain generalised Pontryagin and Chern classes 
as maps from bordism cohomology to the singular cohomology of $X$. These relate to the usual characteristic classes on $K$-theory via the Chern-Dold character. \\ 

Bordism theory and genera permeate  both algebraic topology and geometric index theory, with broad implications for theoretical physics. Specifically, Classical (elliptic) genera define partition functions in  type II superstring theory, while the classical bordism ring defines in its `quantised' form the ontology of topological QFT.  The parametrised genera considered here may be relevant to  `families' versions of these applications; for instance, it is natural to contemplate  a fibred/vertical TQFT as a symmetric monoidal functor from the bordism category of fibre bundles  to the category of vector bundles. \\

We begin by briefly reviewing the basic constructions of classical bordism. 

\subsection{Classical genera}

A rational  genus with values in an integral domain $R$ over $\Q$  is a ring homomorphism 
\begin{equation}\label{varphi}
\varphi: \MSO_* \ox\Q \to R
\end{equation}
with $\MSO_*$ the oriented bordism ring. An element of $\MSO_*$ is the equivalence class $[N]$ of a closed oriented $n$-manifold $N$ in which $N\sim N'$  (bordant) if $N \sqcup -N' = \partial W$ is the boundary of a compact $(n+1)$-manifold $W$, with $-N'$ the manifold $N'$ with orientation reversed. The additive and multiplicative structure of $\MSO_*$ is defined by disjoint union and direct product, respectively. Thus, $\varphi$ assigns to each oriented closed manifold $N$ a element  $\varphi(N)\in R\ox \Q$ with  
$\varphi(N\sqcup N')=\varphi(N)+\varphi(N')$
and $\varphi(N\x N') = \varphi(N)\varphi(N')$, and such that if $N = \partial W$ is a boundary  then  $\varphi(N)=0$. These properties are captured and characterised in a more granular way using the Pontryagin numbers 
\begin{equation}\label{Pnumbers}
p_J(N) = \langle p_{j_1}\cdots p_{j_r}, [N] \rangle  = \int_N p_{j_1}\cdots p_{j_r} \in\Q
\end{equation}
defined for each partition $J=(j_1,\dots,j_r)$ of $\frac{1}{4}\dim N  \in \N$, with $p_k\in H^{4k}(N,\Q)$ the $k$th Pontryagin class of $N$, along with  a formal power series 
\begin{equation}\label{fphi}
f_\varphi  \in R\ox\Q[[x]]
\end{equation}
prescribing how to construct the genus $\varphi$ from linear combinations of the $p_J$. 
The intrinsic numbers $p_J(N)$  are cobordism invariants, in fact
\begin{equation}\label{classical pontryagin}
[N] = [N']   \ \ \ \iin\ \MSO_*\otimes \Q  \  \ \iff  \ \  p_J(N) = p_J(N') \ \ \forall \ |J| =\dim N/4,
\end{equation}
equivalently,  $p_{j_1}(N)\cup \cdots \cup p_{j_r}(N)$ with $j_1\leq j_2\leq \ldots \leq j_r$ and $\sum j_r = \dim N/4$ are a basis for the dual cobordism ring $\MSO^*$. The characteristic power series $f_\phi$ has the form
\begin{equation}\label{Pnumbers}
f_\varphi (x) = \frac{x}{e_\varphi(x)}  \in R\ox\Q[[x]]
\end{equation}
in which  $e_\varphi  \in R\ox\Q[[x]]$, with leading term $x$ and $e_\varphi(-x) = -e_\varphi(x)$, is the formal inverse power series to   the logarithm  series associated to $\varphi$ defined by
$$l_\varphi (x)   := \sum_{n=1}^\oo \frac{\varphi(\C P^{2n})}{2n+1}\, x^{2n+1} \ \in R\ox\Q[[x]].$$
To give $l_\varphi$, or $e_\varphi$ or $f_\varphi $, is the same thing as $\varphi$, as follows from Thom's identification $\MSO_* \ox\Q \cong \Q[\C P^2, \C P^4, \C P^6, \ldots ],$ i.e. the spaces $\C P^2, \C P^4, \C P^6, \ldots$ form a generating set for $\MSO_* \ox\Q$; so, for example, $\MSO_2 \ox\Q$ is generated by $\C P^2$ while $\MSO_4 \ox\Q$ is generated by $\C P^2 \x \C P^2$ and $\C P^4$.
Explicitly, the product $\Pi_{i=1}^N f_\varphi(x_i)$ is symmetric and even in each variable $x_i$ and so each summand of homogeneity degree $k$ can be written as a polynomial in the elementary symmetric functions of $x^2_1, \ldots, x_N^2$. On taking for  $x_i$ the Chern class of the $i$th splitting line bundle of the complexified tangent bundle, those elementary symmetric functions become the Pontryagin classes $p_k=p_k(TN)$. The component in $H^{\dim N}(N,\Q)$ of the  product  gives in this way a polynomial $K(p_1, \ldots, p_k)$ and by a theorem of Hirzebruch
$$ \varphi(N) = \langle K(p_1, \ldots, p_k), [N] \rangle .$$

The most familiar examples are the signature of a $n=4k$ dimensional manifold for which $e_\varphi(x) = \tanh(x)$, and the $\hat{A}$-genus for which $e_\varphi(x) = 2\sinh(x/2)$. More recent work has been focused around the Witten genus  $\omega: \MSO_* \ox\Q \to \Q[[q]]$ for which $e_\varphi(x) = 2\sinh(x/2) \Pi_{n\geq 1} (1 - q^n e^x)(1 - q^n e^{-x})(1 + q^n)^{-2},$ and its applications to modular forms, M-theory, and elliptic cohomology.  \\

Such genera are natural evolutions of classical Riemann-Roch type formulae of algebraic geometry. This is made precise by  geometric and topological index theory and the Atiyah-Singer index theorems. In the above cases, the signature is the index of the operator $d+ d^* : \Omega^+(N) \to \Omega^-(N)$ on the even component of the Hodge decomposition of the de Rham complex, 
while if $N$ is a spin manifold the $\hat{A}$ genus is equal to the index of the Dirac operator $\slashed{\partial} : \Ss^+(N) \to \Ss^-(N)$ on positive spinors. The Witten genus is formally the index of a Dirac operator on the loop space of a $4k$ dimensional spin manifold;  a rigorous construction of this operator is missing, but there is a well developed stable homotopy description of the Witten genus as map of spectra  from elliptic cohomology to topological modular forms  \cite{Hopkins}.\\

\subsection{Vertical genera}  The most geometrically appealing case of the scenario we consider here is for a fibre bundle defined by a submersion $\pi: M\to X$ endowed with a choice of orientation  on the vertical tangent bundle $T_\pi$ over $M$  (along the fibres), specifying a smooth family of closed oriented-diffeomorphic manifolds $N_x = \pi\ii(x) $, note $M$ need not be orientable. A second such bundle $\pi\pr: M\pr\to X$ is fibrewise bordant to $\pi$ if there is a vertically oriented (see below) manifold $W$ with boundary diffeomorphic to $M\sqcup -M\pr$ and a map $\s:W\to X$ (we do not require $\s$ to be a submersion) which restricts to $\pi$ on $M$ and $\pi\pr$ on $M\pr$. 
 The set of all such objects forms a graded ring $\etaaSOX = \bigoplus_q \etaSOXq$ summed over fibre dimension $q$ in which the additive and multiplicative structure is defined by fibrewise disjoint union and fibre product. As a natural extension of \eqref{varphi},  a fibred (vertical) rational genus is defined to be a ring homomorphism 
\begin{equation}\label{varphiX}
\etaSOX \ox \Q \to H^*(X,\Q).
\end{equation} 
$\etaaSOX$ reduces when $X=pt$ is a point to  the classical bordism ring $\MSO_*$ while \eqref{varphiX} with $R=\Q$ then reduces to \eqref{varphi}.  An indication of the likely form of such genera is inferred from the cohomological version  of the Atiyah-Singer index Theorem for a smooth family $\mathcal{D}^\text{spin}_\pi$ of Dirac operators defined by a geometric spin fibration $\pi: M \rightarrow X$ (submersion) with index bundle $\text{Ind}\,\mathcal{D}^\text{spin}_\pi \in \K^0(X)$, which states that
\begin{equation}\label{FIT}
\text{ch}(\text{Ind}\,\mathcal{D}^\text{spin}_\pi) = \hat{A}(\pi) := \pi_!( \hat{A}(T_\pi)).
\end{equation} 
Here, $\pi_!( \hat{A}(T_\pi))$ -- a vertical genus -- is the the push-forward (integral over the fibre) to $H^*(X,\Q)$ of the $\hat{A}$-genus in $H^*(M,\Q)$ of the vertical tangent bundle $T_\pi$. \\  

The construction of vertical genera does not, in fact, depend in any essential way on $\pi$ being a submersion, a similar construction can be made for any homotopy class of maps $f: M \to X$ from a  closed {\it vertically oriented} manifold $M$ of dimension $k$  to $X$, defining a bordism homology class $[f]\in \MSO_k(X)$ (see \S2 for precise definitions). The natural inclusion 
\begin{equation}\label{etaX to MSOX homology}
\etaSOXq \to \MSO_{\dim X + q}(X)
\end{equation}
is then dual by Atiyah-Poincar\'{e} duality to an inclusion   
\begin{equation}\label{etaX to MSOX}
\etaSOXq \to \MSO^{-q}(X)
\end{equation}
identifying $\etaaSOX$ as a graded subring of bordism cohomology $\MSO^*(X)$. (The (co)bordism cohomology groups $\MSO^r(X)$ are defined for all integers $r$,  non-trivially so for negative integers,  but $\MSO^r(X)=0$ for $r>\dim X$.) The upshot is that $\etaaSOX$ may be viewed either homologically or dually as a cohomology theory. For ring properties this is important since the graded multiplication defined by the fibre product $$ \mathlarger{\mathlarger{\eta}}\mbox{{\small SO}}_{\mbox{\tiny X,p}}\x  \mathlarger{\mathlarger{\eta}} \mbox{{\small SO}}_{\mbox{\tiny X,q}} \to \mathlarger{\mathlarger{\eta}} \mbox{{\small SO}}_{\mbox{\tiny X,p+ q}}$$ is seen to coincide via \eqref{etaX to MSOX} with the cup product on  $\MSO^*(X)$.
 \\

Consequently, the general definition of an oriented vertical genus over a smooth closed manifold $X$  is a ring homomorphism 
\begin{equation*}
\nuX :  \MSO^*(X) \ox\Q \to \H^*(X, \Q)
\end{equation*}
from the oriented cobordism cohomology ring $\MSO^*(X) $ to the singular cohomology ring of $X$.  Our purpose here is to give a construction of such genera in terms of  vertical Pontryagin classes
\begin{equation*}
\p^{\mso}_J : \MSO^*(X) \to \H^*(X, \Z), \hten J = (j_1,\ldots, j_m)\in \N^\oo,
\end{equation*}
 which collectively  characterise $\MSO^*(X)\ox\Q $   in the same way that, classically \eqref{classical pontryagin}, Pontryagin numbers characterise  the classical oriented (co)bordism ring $\MSO_*\ox\Q =\MSO(pt)\ox \Q$ 
 coefficient ring to $\MSO^*(X)\otimes \Q$. Precisely, extending \eqref{classical pontryagin}, one has: \\

{\bf Theorem 1.}   $\a=\a\pr \ \,\text{in}\ \MSO^*(X)\otimes \Q  \, \Leftrightarrow \,  \p^{\mso}_J(\a)=\p^{\mso}_J(\a\pr)\  \ {\rm  in} \   \H^*(X, \Q) \  \forall\ J \subset \mathbb{N}^\infty.$\\

A corresponding result for complex bordism cohomology $\MU^*(X)$ holds in terms of vertical Chern classes
$$\c^{\muu}_J : \MU^*(X) \to \H^*(X, \Z)$$
which relate  to the Conner-Floyd Chern classes in $\MU^*(X)$ via the Chern-Dold character 
$\ch^{\muu} : \MU^*(X) \to \H^*(X, \MSO_*)$.  Vertical Chern classes
can be built  into stably complex vertical  genera, defining ring homomorphisms $ \MU^*(X) \ox\Q \to \H^*(X, \Q).$\\ 

For unoriented bordism cohomology  the correspondence holds in terms of vertical Stiefel-Whitney classes ${\rm sw}^{\mo}_J : \MO^*(X) \to \H^*(X, \Z_2)$. \\ 

Vertical genera on $\MSO^*(X)\ox\Q $, or in their geometrically natural habitat on  $\etaaSOX$,  admit a straightforward characterisation mirroring the classical case: \\

{\bf Theorem 2.} \ \ {\it To each characteristic power series $f_\nu \in \mathbb{Q}[x]$, and resulting multiplicative sequence, there is a vertical genus 
$\nu_{\mbox{\tiny X}} :  \MSO^*(X)\ox\Q \to \H^*(X, \Q)$ given by a power series in the vertical Pontryagin classes $\p^{\mso}_J$.} \\

An analogous statement holds for vertical stably complex genera. Each classical genus is thus seen to admit a vertical/fibrewise analogue. For example, 
one has the vertical $\hat{A}$-genus $\a\mto \hat{A}(\a)$ with component polynomials 
$$\hat{A}_1\left(\a\right)= -\frac{1}{24}\p^{\mso}_1(\a), \ \ 
\hat{A}_2\left(\a \right)= \frac{7}{2^7 \cdot 3^2 \cdot 5}\left(-4 \p^{\mso}_2(\a)+7\p^{\mso}_1(\a)^2\right), \ldots \ \ \in \H^*(X,\Q)$$ 
and so on. Theorem 2 says that familiy's index has the following multiplicativity property:
\begin{cor}
 In $\H^*(X, \mathbb{Q})$ one has
\begin{align}\label{(0.22)}
\hat{A}(\pi\times_{\mbox{\tiny X}} \pi\pr)=\hat{A}(\pi) \cup \hat{A}(\pi\pr)
\end{align}
for $\pi\times_{\mbox{\tiny X}} \pi\pr : M\xX M\pr\to X$  the fibre product (with fibre $\pi\ii(x)\x (\pi\pr)\ii(x)$ at $x$).  
\end{cor}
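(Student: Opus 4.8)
The plan is to deduce the corollary from Theorem~2 together with the identification, recorded in the introduction, of the fibre-product multiplication on $\etaaSOX$ with the cup product on $\MSO^*(X)\ox\Q$. First I would observe that the vertical $\hat{A}$-genus is, by Theorem~2, a \emph{ring} homomorphism
\begin{equation*}
\hat{A}(-) : \MSO^*(X)\ox\Q \too \H^*(X,\Q),
\end{equation*}
namely the one attached to the characteristic power series $f_\nu(x) = x/(2\sinh(x/2))$. In particular it is multiplicative: $\hat{A}(\a\cup\a\pr) = \hat{A}(\a)\cup\hat{A}(\a\pr)$ for all $\a,\a\pr\in\MSO^*(X)\ox\Q$.

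Next I would translate this back to the geometric picture. Under the inclusion \eqref{etaX to MSOX}, the fibration $\pi : M\to X$ with oriented vertical tangent bundle $T_\pi$ determines a class $[\pi]\in\etaSOXq\subset\MSO^{-q}(X)$, and likewise $[\pi\pr]\in\MSO^{-q\pr}(X)$. As stated in the excerpt, the fibre-product operation $\etaSOXq\x\etaSOXqpr\to\etaSOXqplusqpr$ corresponds under \eqref{etaX to MSOX} to the cup product on $\MSO^*(X)$; that is, $[\pi\xX\pi\pr] = [\pi]\cup[\pi\pr]$ in $\MSO^{-q-q\pr}(X)\ox\Q$. Hence
\begin{equation*}
\hat{A}(\pi\xX\pi\pr) = \hat{A}\bigl([\pi]\cup[\pi\pr]\bigr) = \hat{A}([\pi])\cup\hat{A}([\pi\pr]) = \hat{A}(\pi)\cup\hat{A}(\pi\pr),
\end{equation*}
which is \eqref{(0.22)}. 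It remains only to check that the symbol $\hat{A}(\pi)$ in \eqref{FIT}, defined as the fibre integral $\pi_!(\hat{A}(T_\pi))$, agrees with $\hat{A}([\pi])$ computed from the vertical Pontryagin classes $\p^{\mso}_J$ via Theorem~2; this is the compatibility of the vertical Pontryagin classes with ordinary fibre integration of the Pontryagin classes of $T_\pi$, and should already be part of the construction of $\p^{\mso}_J$ (it is how the vertical classes are pinned down on the subring $\etaaSOX$).

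The only genuine point to verify is this last identification, and I expect it to be the main (though minor) obstacle: one must confirm that on classes coming from honest fibrations the vertical Pontryagin classes $\p^{\mso}_J([\pi])$ are exactly $\pi_!$ of the corresponding monomials in $p_k(T_\pi)$, so that the multiplicative sequence attached to $f_\nu$ evaluated on the $\p^{\mso}_J$ reproduces $\pi_!(\hat{A}(T_\pi))$. Granting that — and granting the ring-homomorphism property from Theorem~2 and the cup-product description of the fibre product — the corollary is immediate, since multiplicativity of a ring map is being applied to two explicit elements. I would also remark that the same argument yields the analogous statement for any vertical genus $\nuX$ in place of $\hat{A}$, and in particular for the vertical signature and Todd genus, and (using the complex analogue of Theorem~2) for stably complex fibrations.
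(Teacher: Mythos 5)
Your argument is correct and is exactly the one the paper intends: the corollary is deduced from Theorem~2 by applying the ring-homomorphism property of the vertical genus to $[\pi]\cup[\pi\pr]=[\pi\xX\pi\pr]$ under the identification \eqref{etaX to MSOX}. The compatibility you flag at the end — that for a submersion $\pi$ one has $-\Vv^{\rm st}_\pi=T_\pi$ and hence $\p^{\mso}_J(D^{\mso}[\pi])=\pi_!(\p_J(T_\pi))$ — is indeed addressed in the paper (in \S4 and again in the example closing \S5), so no real gap remains.
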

Further, since rationally the Chern character $\rm{ch}$ is a ring isomorphism, the classical cobordism invariance of the pointwise index for a single manifold is repositioned for fibrations by Theorems 1 and 2 as:\\

\begin{cor} {\rm (Vertical cobordism invariance of the family index)}
 If $\pi: M \rightarrow X$ and $\pi\pr: M' \rightarrow X$ are vertically bordant spin fibrations, so $[\pi]=[\pi']$, then
\begin{align*}
\rm{Ind}\,\mathcal{D}^\text{spin}_\pi=\rm{Ind}\,\mathcal{D}^\text{spin}_{\pi\pr} \qquad \text{ in } \K(X) \otimes \mathbb{Q}.
\end{align*}
If $\pi: M \rightarrow X$ is a boundary fibration, then $\rm{Ind}\,\mathcal{D}^\text{spin}_\pi=0$.
Moreover, for any $\pi, \pi\pr$ 
$$\rm{Ind}\,\mathcal{D}^\text{spin}_{\pi\times_{\mbox{\tiny X}} \pi\pr}= \rm{Ind}\,\mathcal{D}^\text{spin}_\pi \otimes \rm{Ind}\,\mathcal{D}^\text{spin}_{\pi\pr}.
$$
\end{cor}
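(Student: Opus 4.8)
The plan is to reduce all three assertions to the cohomological family index theorem \eqref{FIT}, combined with the classical fact that the Chern character $\ch\colon\K(X)\ox\Q\to\H^*(X,\Q)$ is a ring isomorphism on the rational $\K$-theory of the closed manifold $X$, hence in particular injective. The one input borrowed from the body of the paper is that the geometrically defined vertical $\hat{A}$-genus $\hat{A}(\pi)=\pi_!\bigl(\hat{A}(T_\pi)\bigr)$ is precisely the value on the bordism class $[\pi]\in\etaaSOX\ox\Q\subset\MSO^*(X)\ox\Q$ of the ring homomorphism $\hat{A}\colon\MSO^*(X)\ox\Q\to\H^*(X,\Q)$ produced by Theorem~2; in particular $\hat{A}(\pi)$ depends only on $[\pi]$ and is multiplicative in it. Once this is in hand everything below is formal.

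\emph{Bordism invariance and vanishing.} If $\pi$ and $\pi\pr$ are vertically bordant then $[\pi]=[\pi\pr]$ already in $\etaaSOX$ (only an oriented vertical bordism of $\pi$ and $\pi\pr$ is used here, not a spin bordism between them, since the $\hat{A}$-genus factors through $\MSO^*(X)\ox\Q$), so $\hat{A}(\pi)=\hat{A}(\pi\pr)$ in $\H^*(X,\Q)$. By \eqref{FIT},
\[
\ch\bigl(\mathrm{Ind}\,\mathcal{D}^{\text{spin}}_\pi\bigr)=\hat{A}(\pi)=\hat{A}(\pi\pr)=\ch\bigl(\mathrm{Ind}\,\mathcal{D}^{\text{spin}}_{\pi\pr}\bigr),
\]
and injectivity of $\ch$ on $\K(X)\ox\Q$ forces $\mathrm{Ind}\,\mathcal{D}^{\text{spin}}_\pi=\mathrm{Ind}\,\mathcal{D}^{\text{spin}}_{\pi\pr}$ there. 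If moreover $\pi$ is a boundary fibration then $[\pi]=0$ in $\etaaSOX$, hence $\hat{A}(\pi)=0$ because a ring homomorphism kills $0$; so $\ch(\mathrm{Ind}\,\mathcal{D}^{\text{spin}}_\pi)=0$ and, again by injectivity, $\mathrm{Ind}\,\mathcal{D}^{\text{spin}}_\pi=0$ in $\K(X)\ox\Q$. (Alternatively one may route this through Theorem~1, noting $\p^{\mso}_J(\pi)=\p^{\mso}_J(\pi\pr)$ for all $J$.)

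\emph{Multiplicativity under fibre product.} Apply \eqref{FIT} to $\pi\xX\pi\pr$, which is again a geometric spin fibration: its vertical tangent bundle is the fibrewise direct sum of the pullbacks of $T_\pi$ and $T_{\pi\pr}$, and the sum of two spin bundles is spin. Using the multiplicativity of the vertical $\hat{A}$-genus \eqref{(0.22)},
\begin{align*}
\ch\bigl(\mathrm{Ind}\,\mathcal{D}^{\text{spin}}_{\pi\xX\pi\pr}\bigr)
&=\hat{A}(\pi\xX\pi\pr)=\hat{A}(\pi)\cup\hat{A}(\pi\pr)\\
&=\ch\bigl(\mathrm{Ind}\,\mathcal{D}^{\text{spin}}_\pi\bigr)\cup\ch\bigl(\mathrm{Ind}\,\mathcal{D}^{\text{spin}}_{\pi\pr}\bigr)
=\ch\bigl(\mathrm{Ind}\,\mathcal{D}^{\text{spin}}_\pi\ox\mathrm{Ind}\,\mathcal{D}^{\text{spin}}_{\pi\pr}\bigr),
\end{align*}
the last equality because $\ch$ is a ring homomorphism and $\ox$ denotes the internal tensor product in $\K(X)$. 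Injectivity of $\ch$ on $\K(X)\ox\Q$ then gives $\mathrm{Ind}\,\mathcal{D}^{\text{spin}}_{\pi\xX\pi\pr}=\mathrm{Ind}\,\mathcal{D}^{\text{spin}}_\pi\ox\mathrm{Ind}\,\mathcal{D}^{\text{spin}}_{\pi\pr}$.

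\emph{Main obstacle.} The real work is the identification invoked in the first paragraph, that $\pi_!\bigl(\hat{A}(T_\pi)\bigr)$ equals the Theorem~2 vertical genus evaluated at $[\pi]$ under \eqref{etaX to MSOX}, and that this is compatible with fibre products. I expect this to come down to two naturality facts: that $\p^{\mso}_J([\pi])$ is the fibre integral $\pi_!$ of the Pontryagin classes of $T_\pi$ (via the fibrewise splitting principle and the construction of the vertical Pontryagin classes), and that $\pi_!$ obeys a projection formula making it multiplicative on fibre products — which is exactly what feeds \eqref{(0.22)}. A secondary, essentially bookkeeping, point will be checking that the geometric data (vertical metrics, connections, spin structures) can be chosen on $\pi\xX\pi\pr$ so that \eqref{FIT} genuinely applies; with that arranged, the rest of the argument is purely homological and rests only on the rational injectivity of $\ch$.
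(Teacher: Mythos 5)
Your proof is correct and follows essentially the same route as the paper, which presents the corollary as an immediate consequence of \eqref{FIT}, Theorems 1 and 2, and the rational ring isomorphism property of the Chern character. The ``main obstacle'' you flag — that $\pi_!(\hat{A}(T_\pi))$ agrees with the Theorem~2 genus under \eqref{etaX to MSOX} — is in fact handled in the body of the paper (the discussion after \eqref{pJ}, where $\p^{\mso}_J(D^{\mso}[\pi])=\pi_!(\p_J(T_\pi M))$ for a submersion since $-\Vv^{\rm st}_\pi = T_\pi$), so your reduction is complete.
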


Similarly, there is a vertical  $L$-genus which  from \cite{Atiyah2}  implies the vertical bordism invariance of a natural geometric family of signature operators $\mathcal{D}^\text{sig}_\pi$ and the multiplicativity of its index bundle.  It would be interesting to know more about other vertical elliptic genera from the viewpoint here, such as the vertical Witten genus taking values in $H^*(X, \Q[[q]])$, which for a fibre bundle $M \stackrel{\pi}{\rightarrow} X$ of $4n$-dimensional closed spin manifolds  may, conjecturally,  be close to the Chern character of the formal index bundle of the resulting family of Witten-type Dirac operators on the fibrewise loop space of  $M$.

\section{Vertically oriented bordism homology}

All manifolds will be smooth and compact. The tangent bundle of a manifold $N$ will be denoted $\Tn$. The orientation bundle $ \mathcal{O}_N$ of $N$ is the principal  $\Gl(1,\R)$  frame bundle $F(\Det \Tn)$  of the determinant line bundle $\Det \Tn := \wedge^n \Tn$, or, equivalently for our purposes, the   principal $\Z_2$ bundle $F(\Det \Tn)_{\Gl(1,\R)}\x\Z_2$ coming from the map $\Gl(1,\R)\to \Z_2$, $x\mto x/|x|$, whose transition functions are $\det(g_{ij})/|\det(g_{ij})|$ with $g_{ij}$ the transition functions of $\Tn$.  More generally, the orientation bundle $ \mathcal{O}_{E-F}$ of a stable bundle $E- F$ in the $\K$-theory ring of real vector bundles $\KO(X)$ is the  frame bundle of the determinant line bundle $\Det E\ox \Det F^*$, and a choice of orientation on $E- F$, if one exists, is a choice of trivialisation of $ \mathcal{O}_{E-F}$. \\

A smooth map $f:N\to X$ is said to be vertically oriented if there exists an isomorphism of orientation bundles 
$\mathcal{O}_N \cong f^* \mathcal{O}_X$ with a choice of isomorphism $ \a_{\mbox{\tiny N,X}} : \mathcal{O}_N \to f^* \mathcal{O}_X$ specified. One has:\\ 
\begin{lem} \label{v-orient}
A vertical orientation on $f:N\to X$ is equivalent to a choice of orientation on its stable normal bundle $$\Vv^{\rm st}_f := f^*\Tx - \Tn \ \in \KO(N).$$ 
\end{lem}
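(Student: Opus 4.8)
The plan is to unwind both sides to statements about trivialising a single $\Z_2$-bundle (equivalently, a line bundle up to the $x\mapsto x/|x|$ reduction), and then to identify those two $\Z_2$-bundles. Recall that an orientation on a stable bundle $E-F\in\KO(N)$ is a trivialisation of its orientation bundle $\mathcal{O}_{E-F}=F(\Det E\ox\Det F^*)$, and that for a genuine bundle $E$ one has $\mathcal{O}_E=F(\Det E)$; the formation of $\mathcal{O}$ is additive in the sense that $\mathcal{O}_{A\op B}\cong\mathcal{O}_A\ox_{\Z_2}\mathcal{O}_B$ (from $\Det(A\op B)\cong\Det A\ox\Det B$) and $\mathcal{O}_{-A}\cong\mathcal{O}_A$ under the identification of a $\Z_2$-torsor with its inverse. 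Hence for $\Vv^{\rm st}_f=f^*\Tx-\Tn$ we get a canonical isomorphism
\begin{equation}\label{plan-Ov}
\mathcal{O}_{\Vv^{\rm st}_f}\;\cong\;\mathcal{O}_{f^*\Tx}\ox_{\Z_2}\mathcal{O}_{\Tn}\;\cong\;f^*\mathcal{O}_X\ox_{\Z_2}\mathcal{O}_N,
\end{equation}
using $\mathcal{O}_{f^*\Tx}\cong f^*\mathcal{O}_{\Tx}=f^*\mathcal{O}_X$. This is the computational heart of the argument and I would state it as the first step, being a little careful that $\mathcal{O}$ depends only on the stable class (adding a trivial bundle $\R^k$ does not change $\Det$ up to canonical iso, so $\mathcal{O}$ is well defined on $\KO(N)$, not just on bundles).

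Next I would pass between trivialisations of a $\Z_2$-torsor $P$ and of $P^{-1}=P$: since $\Z_2$ is its own inverse group, a section of $P$ is the same datum as a section of $P^{-1}$, and more usefully, a section of $P\ox_{\Z_2}Q$ together with a section of $Q$ canonically produces a section of $P$, and conversely. Applying this to \eqref{plan-Ov}: a trivialisation $s$ of $\mathcal{O}_{\Vv^{\rm st}_f}$ together with a trivialisation $t$ of $\mathcal{O}_N$ yields a trivialisation of $f^*\mathcal{O}_X$; but $\mathcal{O}_N$ is \emph{not} assumed trivial, so this naive route needs the hypothesis reorganised. The correct move is: rewrite \eqref{plan-Ov} as $\mathcal{O}_{\Vv^{\rm st}_f}\ox_{\Z_2}\mathcal{O}_N\cong f^*\mathcal{O}_X$ (again using $\mathcal{O}_N\ox_{\Z_2}\mathcal{O}_N$ canonically trivial), so that a trivialisation of $\mathcal{O}_{\Vv^{\rm st}_f}$ is \emph{exactly the same datum} as an isomorphism $\mathcal{O}_N\xrightarrow{\ \sim\ }f^*\mathcal{O}_X$, i.e. as a vertical orientation $\a_{\mbox{\tiny N,X}}$ of $f$. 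This bijection, natural in $f$, is the statement of the lemma.

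Finally I would record that the correspondence is compatible with the structure one expects: the ``absolute'' case $X=pt$ recovers that an orientation of $N$ (trivialisation of $\mathcal{O}_N$) is an orientation of $-\Tn$, hence of the ordinary stable normal bundle, which is the classical fact the lemma generalises; and under composition of vertically oriented maps the induced orientations of stable normal bundles add correctly, via \eqref{plan-Ov} applied to $g\circ f$ together with the cocycle $\Vv^{\rm st}_{g\circ f}=f^*\Vv^{\rm st}_g+\Vv^{\rm st}_f$ in $\KO$. The main obstacle is purely bookkeeping: making precise and consistent the identifications $\mathcal{O}_{A\op B}\cong\mathcal{O}_A\ox_{\Z_2}\mathcal{O}_B$, $\mathcal{O}_{-A}\cong\mathcal{O}_A$, and $P\ox_{\Z_2}P\cong\underline{\Z_2}$, and checking that the resulting bijection between vertical orientations and stable-normal orientations does not depend on the auxiliary choices (e.g.\ the stabilisation used to represent $\Vv^{\rm st}_f$ by an actual bundle pair). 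Once the $\Z_2$-torsor yoga is set up cleanly, the proof is a two-line consequence of \eqref{plan-Ov}.
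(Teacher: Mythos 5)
Your proof is correct and follows essentially the same route as the paper: both compute $\mathcal{O}_{\Vv^{\rm st}_f}$ as $F(\Det f^*\Tx\ox\Det\Tn^*)$ and identify a trivialisation of it with an isomorphism $\mathcal{O}_N\cong f^*\mathcal{O}_X$. You spell out the $\Z_2$-torsor bookkeeping ($\mathcal{O}_{A\oplus B}\cong\mathcal{O}_A\ox_{\Z_2}\mathcal{O}_B$, $\mathcal{O}_{-A}\cong\mathcal{O}_A$, $P\ox_{\Z_2}P$ trivial) more explicitly than the paper's one-line argument, but the core identification is the same.
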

\begin{proof}
$f^*\mathcal{O} _X = \mathcal{O}_{f^* \Tx}$ and so a vertical orientation on $f$ is equivalent to a choice of  isomorphism $\Det \Tn \cong f^*\Det  \Tx \cong \Det f^* \Tx$, that is, a trivialisation of the line bundle $\Det \Tn \ox \Det f^* \Tx$, which is the same as a trivialisation of its frame bundle. 
\end{proof}

The induced orientation in a fibre product is defined as follows. Let $f:M\to X$ and $g:N\to X$ be smooth maps. Then there is a commutative diagram
\begin{equation}\label{fibre prod diag}
\begin{tikzcd}
\MxN \arrow{r}{\mu} \arrow[swap]{dr}[above]{\hfive \   s = f \times_{{\tiny X}} g}\arrow{d}{\nu} &  M\arrow{d}{f} \\
N \arrow{r}{g}   & X
\end{tikzcd}
\end{equation}
in which
$$ \MxN =\{(y,z)\in M\x N \ | \ f(y) = g(z)\}$$
is the fibre product of $(M,f)$ and $(N,g)$. Equivalently, $ \MxN = (f \x g)^{-1}(\text{diag}(X))$ with  $\text{diag}(X)$ the diagonal in $X\x X$.  The maps  $\mu, \nu$ are the projection maps. The diagonal map is, thus,
\begin{equation}\label{sfpgq}
s= f\circ \mu = g\circ \nu.
\end{equation}
 $f$ and $g$ are  transverse if for $(y,z)\in M \xX N$ one has $df_y(T_y M) + dg_z(T_z N) = T_x X$, where $x =f(y) = g(z)$, or,  equivalently, if $f \x g$ is transverse to $\text{diag}(X)$. If $f$ and $g$ are transverse maps then $\MxN$ is a smooth manifold of dimension $\dim M + \dim N - \dim X$; if $\dim X > \dim M + \dim N$ then $\MxN$ is the empty manifold (see Prop II.4 \cite{Lang}).

\begin{prop}\label{induced orientations}
Let $f$ and $g$ be transverse maps. Then 
\begin{equation}\label{Vv pull back}
\Vv^{\rm st}_\nu  = \mu^* \Vv^{\rm st}_f 
\end{equation} and
\begin{equation}\label{Vv product}
\Vv^{\rm st}_{f \times_{{\tiny X}} g}  = \mu^* \Vv^{\rm st}_f + \nu^* \Vv^{\rm st}_g.
\end{equation}
\vtwo
The fibre product map $f \times_{{\tiny X}} g: \MxN\to X$ is canonically vertically oriented by vertical orientations on $f:M\to X$ and $g:N\to X$. The pull-back map 
$$ \nu: g^*(M\stackrel{f}{\to}X) := \MxN \to N$$
is canonically vertically oriented by a vertical orientation on $f:M\to X$.
\end{prop}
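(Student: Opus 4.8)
The plan is to derive everything from \lemref{v-orient} together with the defining pullback square \eqref{fibre prod diag}, reducing the statement about induced vertical orientations to the two displayed stable-bundle identities \eqref{Vv pull back} and \eqref{Vv product}. First I would establish \eqref{Vv pull back}: by definition $\Vv^{\rm st}_\nu = \nu^* \Ty - T_{\MxN}$, so I need to identify $T_{\MxN}$. Since $f$ and $g$ are transverse, $\MxN = (f\x g)\ii(\text{diag}(X))$ is cut out cleanly, and the standard description of the tangent bundle of a transverse preimage gives a short exact sequence $0 \to T_{\MxN} \to \mu^* \Tm \op \nu^* \Tn \to s^* \Tx \to 0$ (the last map being $df - dg$, surjective by transversality, with $s = f\circ\mu = g\circ\nu$ as in \eqref{sfpgq}). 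Hence in $\KO(\MxN)$,
\begin{equation*}
T_{\MxN} = \mu^*\Tm + \nu^*\Tn - s^*\Tx.
\end{equation*}
Now \eqref{Vv pull back} follows: $\nu^*\Ty - T_{\MxN} = \nu^*\Tn - \mu^*\Tm - \nu^*\Tn + s^*\Tx = s^*\Tx - \mu^*\Tm = \mu^*(f^*\Tx - \Tm) = \mu^*\Vv^{\rm st}_f$, using $s^*\Tx = \mu^* f^* \Tx$. Similarly \eqref{Vv product}: $\Vv^{\rm st}_s = s^*\Tx - T_{\MxN} = s^*\Tx - \mu^*\Tm - \nu^*\Tn + s^*\Tx = (\mu^* f^*\Tx - \mu^*\Tm) + (\nu^* g^*\Tx - \nu^*\Tn) = \mu^*\Vv^{\rm st}_f + \nu^*\Vv^{\rm st}_g$, again rewriting one copy of $s^*\Tx$ through $\mu$ and the other through $\nu$.

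With the $\KO$-theory identities in hand, the vertical orientations are obtained functorially. A vertical orientation on $f$ is, by \lemref{v-orient}, a trivialisation of $\mathcal{O}_{\Vv^{\rm st}_f}$, equivalently of the line bundle $\Det \Tm \ox \Det f^*\Tx$. Pulling back along $\mu$ gives a trivialisation of $\mu^*\mathcal{O}_{\Vv^{\rm st}_f} = \mathcal{O}_{\mu^*\Vv^{\rm st}_f} = \mathcal{O}_{\Vv^{\rm st}_\nu}$ by \eqref{Vv pull back}, which is exactly a vertical orientation on $\nu: \MxN \to N$; this proves the second assertion. For the first, a vertical orientation on $g$ gives a trivialisation of $\mathcal{O}_{\Vv^{\rm st}_g}$, hence of $\nu^*\mathcal{O}_{\Vv^{\rm st}_g} = \mathcal{O}_{\nu^*\Vv^{\rm st}_g}$; combined with the $\mu$-pullback of the $f$-orientation trivialising $\mathcal{O}_{\mu^*\Vv^{\rm st}_f}$, and using that $\mathcal{O}$ converts Whitney sum to the tensor (equivalently $\Z_2$-sum) of $\Z_2$-bundles, the two trivialisations multiply to a trivialisation of $\mathcal{O}_{\mu^*\Vv^{\rm st}_f + \nu^*\Vv^{\rm st}_g} = \mathcal{O}_{\Vv^{\rm st}_s}$ by \eqref{Vv product}. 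That is the canonical vertical orientation on $f\x_{{\tiny X}} g$. Canonicity (independence of auxiliary choices, and naturality) is immediate since every step is a pullback or a tensor product of the given data.

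I expect the main obstacle to be purely a matter of bookkeeping rather than depth: making the exact sequence for $T_{\MxN}$ precise and correctly oriented. Two points need care. First, one should check the claimed sequence genuinely comes from transversality — it is the pullback to $\MxN$ of $0 \to T(\text{diag} X) \to T(X\x X)|_{\text{diag}} \to \nu_{\text{diag}} \to 0$ along $f\x g$, with $f\x g$ transverse to $\text{diag}(X)$, so that $T_{\MxN}$ is the pullback of $T(\text{diag} X)\cong \Tx$ inside $\mu^*\Tm \op \nu^*\Tn = (f\x g)^*T(X\x X)|_{\text{diag}}$; the dimension count $\dim M + \dim N - \dim X$ quoted before the proposition confirms the ranks match. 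Second, since we are working in $\KO$ and then with determinant lines, only the stable/virtual class matters and the sign ambiguities in $\Det$ are precisely what the orientation data pins down — so no extra sign analysis beyond what \lemref{v-orient} already packages is required. A brief remark that when $X = \mathrm{pt}$ these reduce to the classical product and pullback orientations on $M\x N$ would tie the construction back to the introduction, but is not logically needed.
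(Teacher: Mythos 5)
Your proof is correct and follows essentially the same route as the paper: the same short exact sequence $0 \to T_{\MxN} \to \mu^*\Tm \oplus \nu^*\Tn \to s^*\Tx \to 0$ from transversality yields the two $\KO$-theory identities, and the induced vertical orientations are obtained exactly as in the paper by pulling back and tensoring trivialisations of the determinant line bundles via \lemref{v-orient}. (A minor slip: at the start of your derivation of \eqref{Vv pull back} you wrote $\nu^*\Ty$ where $\nu^*\Tn$ was meant, which your next line silently corrects.)
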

\begin{proof}
For transverse $f$ and $g$ there is a bundle isomorphism 
\begin{equation}\label{Tfibreprod}
\Tmn +s^* \Tx \cong \mu^* \Tm + \nu^* \Tn,
\end{equation}
as follows from the exact sequence \cite{Joyce}
$$0\to \Tmn \stackrel{d\mu\oplus d\nu}{\too} \mu^* \Tm + \nu^* \Tn \stackrel{\mu^* df - \nu^* dg}{\too} s^* (\Tx) \to 0.$$
Exactness is by \eqref{sfpgq} and transversality. Hence as stable bundles
$$\Tmn - \nu^* \Tn  \stackrel{\eqref{Tfibreprod}}{=}    \mu^* \Tm - s^* \Tx   =    \mu^* \Tm - (f\circ \mu)^* \Tx  =   \mu^* (\Tm - f^* \Tx),$$
which is \eqref{Vv pull back}, and
\begin{equation}\label{Vvs}
\Tmn - s^* \Tx  =    (\mu^*\Tm - s^* \Tx) + (\nu^* \Tn- s^* \Tx)
\end{equation}
which from \eqref{sfpgq} is \eqref{Vv product}. By \lemref{v-orient} the stated induced orientations are immediate from taking top exterior powers (determinant bundles) of these identifications.  
\end{proof}

\vthree

From Atiyah's construction \cite{Atiyah1}, an element $[f,N,\mu]\in \MSO_n(X)$ of bordism homology is represented by a vertically oriented map $f:N\to X$ from a closed manifold $N$ of dimension $n$ --- thus with a given orientation bundle isomorphism  $\mu: f^*\mathcal{O}_X\cong \mathcal{O}_N$ ---  with two such triples $(f,N,\mu)$, $(f\pr,N\pr,\mu\pr)$, being equivalent  if there is a triple $(\s,W,\la)$ with  $\s:W\to X$ an oriented map  from a manifold $W$ of dimension $n+1$ with boundary $\partial W\cong N\sqcup N\pr$ such that $\s_{|N} = f,$ $\s_{|N\pr} = f\pr$, and $\la: \s^*\mathcal{O}_X\cong \mathcal{O}_W$ with $\la_{|N} = \mu,$ $\la_{|N\pr} = -\mu\pr$. 
\begin{comm}\label{atiyah}
In fact, Atiyah defines a bordism homology group $ \MSO_n(X, \a)$ for each principal $\Z_2$-bundle $\a$ on $X$. In that notation, $ \MSO_n(X)$ is the group  $\MSO_n(X,\Oo_X)$ --- note, in contrast to its usage here,  in \cite{Atiyah1}  `$\MSO_n(X)$' refers to the case  $\Oo_X = X\x\Z_2$, i.e. orientable $X$.
\end{comm}
The additive abelian structure of $ \MSO_n(X)$  is defined by the fibrewise disjoint union $(f\uX f\pr ,N\uX N\pr, \a\uX\a\pr)$. Fibre product defines a natural product  
\begin{equation}\label{msoprod}
\MSO_m (X)  \times \MSO_n (X) \to  \MSO_{m+n -\mbox{\tiny {\rm dim} X}} (X),
\end{equation}
and for $\phi:Y\to X$ a Umkehr/Gysin/transfer  map  
\begin{equation}\label{msoumkehr}
\MSO_m (X)  \to  \MSO_{m+\mbox{\tiny {\rm dim}Y- \mbox{\tiny {\rm dim}X}}} (Y).
\end{equation}
The product \eqref{msoprod} maps $([f,M,\mu], [f\pr,M\pr,\mu\pr])$ to $ [ f\xX f\pr: \MxMpr,\mu\xX\mu\pr]$ 
where $f, f\pr$  are chosen (homotoped) within their bordism classes to be mutually transverse.   Similarly, \eqref{msoumkehr} is the map $[f,M,\a]\mto [f\xX \phi,\MxY,\phi^* \a]$ with $f$ chosen transverse to $\phi$.  $\mu\xX\mu\pr$ and $\phi^* \a$ indicate the canonical induced orientations of \propref{induced orientations}.  See \cite{Cohen} for more on Umkehr maps.\\

\eqref{msoprod} is not a graded product
\footnote{Except on the subgroup $\eta so_{X,q}$ of fibre bundles of fibre dimension  $q$ for which \eqref{msoprod} defines a graded product $\eta so_{X,q}\x \eta so_{X,q\pr} \to \eta so_{X,q+q\pr}$
coinciding with the cohomology product  via \eqref{etaX to MSOX}.}.
This is corrected  by  Atiyah-Poincar\'{e} duality which posits matters into  bordism cohomology, with respect to which the homology fibre product \eqref{msoprod} relates to the product on the graded bordism cohomology ring in the same way that the intersection product $\H_m(X) \x \H_n(X) \to \H_{m+n -\mbox{\tiny {\rm dim} X}} (X)$
on singular homology relates to the cup product via classical Poincar\'{e} duality. Precisely, one has a commutative array
\begin{equation}\label{MSOduality}
\begin{tikzcd}[column sep=scriptsize]
  \MSO^{\,-q} (X) \x  \MSO^{\,-q\pr} (X)   \arrow{r}     & \MSO^{\,-(q+q\pr)} (X) \\
 \MSO_{\mbox{\tiny {\rm dim} X + q}}(X) \x \MSO_{\mbox{\tiny {\rm dim} X + $q^{\pr}$}} (X) \arrow{r}   \arrow[u]  &  \MSO_{\mbox{\tiny {\rm dim} X + q + $q\pr$}}(X) \arrow[u] 
\end{tikzcd}
\end{equation}
in which the  duality map $\MSO_{\mbox{\tiny {\rm dim} X + q}}(X) \to \MSO^{\,-q} (X) $ is defined as follows. For vertically oriented $f:M\to X$ representing $[f, M, \a] \in \MSO_{\mbox{\tiny {\rm dim} X + q}} (X)$,  with orientation $\a$ on its stable normal bundle  
$$\Vv^{\rm st}_f  = f^* \Tx - \Tm$$
of rank
$$ {\rm rk}( \Vv^{\rm st}_f) = - q :=  - (\dim M  - \dim X),$$
we may choose an embedding 
$e_l : M \to \R^l$ and fibrewise it to the embedding 
$$e_l \x f  : M \to \R^l \x X,  \hten m\mto (e(m), f(m)),$$ 
which extends to an embedding of the normal bundle 
\begin{equation}\label{v(exf)}
\Vv_{e_l\x f} =  (e_l\x f)^*(\ul l  + \Tx)/\Tm \to \R^l \x X
\end{equation}
 of $e_l\x f$, with $\ul l$ the rank $l$ trivial bundle.  $\Vv_{e_l\x f}$ has rank 
$${\rm rk}(\Vv(e_l\x f)) = l-q.$$

The Thom construction collapses \eqref{v(exf)} contravariantly to the Pontryagin-Thom map 
\begin{equation}\label{PT}
\Si^l X_+ \to M^{\Vv_{e_l\x f}}.
\end{equation}
Here, $Y^V = {\rm Th}(V)$ is the Thom space (one-point compactification of the total space) of a vector bundle $V\to Y$; in particular, the iterated reduced suspension $\Si^l X_+$ is the Thom space of the trivial bundle $\ul l =\R^l \x X$.\\

Since $\Vv_{e_l\x f} - \ul l$ is a representative for the stable normal bundle $\Vv^{\rm st}_f $ this can be positioned invariantly.  $\Vv_{e_l\x f}$ is the same thing as the corresponding homotopy class of maps $M \to \BSO_{l-q}$ to the Grassmannian of oriented $l-q$ planes, covered by a map $\Vv_{e_l\x f} \to \xi_{l-q} $ to the universal bundle, 
giving an induced map of Thom pre-spectra $ M^{\Vv_{e_l\x f}} \to \MSO(l-q)$, where $\MSO(n) := \BSO_n^{\xi_n}$. With \eqref{PT} this defines an element of $[\Si^l X_+, \MSO(l-q)]$, which as a stable colimit is an element of 
$$\MSO^{-q}(X):=\lim_{\stackrel{\longrightarrow}{l}}[\Sigma^l X_+, \MSO(l-q)].$$
If we work in the stable category we may desuspend the map  \eqref{PT} directly to get the stable Pontryagin-Thom map 
\begin{equation}\label{stablePT}
X_+ \to \Si^{-l} M^{\Vv_{e_l\x f}}  = M^{\Vv_{e_l\x f} - \ul l } =  M^{\Vv^{\rm st}_f}
\end{equation}
to the Thom space of the rank $-q$ stable bundle $\Vv^{\rm st}_f$ and so an element in
$$ \MSO^{-q}(X) = [X_+, \MSO_{-q}],$$
where $\MSO_{-q}$ is the Thom spectrum (proper), i.e. the suspension spectrum  of the stable bundle in $\K(\BSO)$ of rank 0 which restricts on $\BSO_n$ to $\xi_n - \ul n$. 
As the graded ring product on $ \MSO^*(X)$ is induced by Whitney sum, the commutativity of  \eqref{MSOduality} follows from \eqref{Vv product}. The proof that  the homotopy class so constructed is independent of the bordism class of $f$ and that the assignment is bijective follows closely standard Thom constructions and the use of $L$-equivalence to mediate the duality map, similarly to \cite{Atiyah1}. \\

There is likewise a Pontryagin-Thom  map between the Thom complexes of the stable normal bundles
\begin{equation}\label{stablePT2}
X^{-\Tx}  \to   M^{-\Tm},
\end{equation}
which  is a particular instance of the extension to the case of a virtual bundle $\xi \to X$, for which one has  the Pontryagin-Thom map
\begin{equation*}
X^{\xi}  \to  M^{f^*\xi + \Vv^{\rm st}_f}
\end{equation*}
obtained by applying \eqref{PT} to the induced map $f^*\xi\to \xi$. Setting $\xi= \zeta - \Tx$ for a virtual bundle $\zeta$ on $X$, this can be put into the form  \cite{CrJa}, \cite{CoJo},
\begin{equation*}
X^{\zeta - \Tx}  \to  M^{f^*\zeta - \Tm}
\end{equation*}
which yields \eqref{stablePT} and \eqref{stablePT2} efficiently. \\

\vtwo

A similar characterisation holds for complex bordism homology $MU_m (X)$ comprising bordism classes $[M,f]$ of maps $f:M\to X$,  $\dim M =m$,  with a stable complex structure on  $\Vv_f^{\rm st} = f^*\Tx - \Tm$, meaning a class of complex structures on $f^*\Tx  + \Ee$ where $\Tm + \Ee = \ul n$, i.e. $\Vv_f^{\rm st}$ is in the image of the forgetful map $\K(X)\to \KO(X).$  There is a natural inclusion $\etaaUXq  \to \MU_{q+\dim X} (X)$. On the other hand, with $q=\dim M - \dim X$, we may choose an embedding $M\to \R^l$ with an isomorphism $\Vv_{e_l\x f} = \zeta_\R$ with $\zeta$  a stable complex vector bundle  of rank $l-q =2k$ identified with the pullback $\mu^*(\xi_k)$ of the canonical bundle $\xi_k\to \BU(k)$ for a homotopy class $\mu \to \BU(k)$. Taking Thom spaces  yields a map $\Sigma^l X_+\to M^{\Vv_{e_l\x f}} \to \MU(k)$ 
and so the duality isomorphism $\MU_q(X) \to \MU^{\dim X - q} (X)$ to the stable  homotopy group
$$\MU^n(X) = \lim_{\stackrel{\longrightarrow}{k}}[\Sigma^{2k-n}X_+, \MU(k)].$$

\section{Umkehr for vertically oriented maps}

Let $\E$ be  a multiplicative cohomology theory. An $\E$-orientation on a vector bundle $V\stackrel{\mu}{\to} M$ of rank $n$ is a Thom class $u = u_V\in \tilde{\E}(M^V)$ whose fibrewise restriction  $$ \tilde{\E}^n(M^V) \to \tilde{\E}^n({\rm Th}(V_m)) = \tilde{\E}^n(S^n)$$  is a generator for each $m$ in $M$. One then has a Thom isomorphism
\begin{equation}\label{T isom}
\phi = \phi_V : \E^r(M) \to \tilde{\E}^{r+n}(M^V), \hfive c\mto \mu^*(c)\cup u_V, \hthree u_V = \phi_V(1).
\end{equation}
This extends to  a canonical isomorphism
\begin{equation*}
\phi_{V-\ul l}: \E^r(M) \to \tilde{\E}^{r+n}(M^V) \cong \tilde{\E}^{r+n-l}(\Si^{-l} M^V) := \tilde{\E}^{r+n-l}(M^{V-\ul l}) 
\end{equation*}  
with the first arrow implemented as in \eqref{T isom}, and hence to defining a stable Thom isomorphism for virtual bundles via   $M^{W-W\pr} = M^{W+W^{\prime\prime}-\ul l}$ where $W\pr+W^{\prime\prime} = \ul l$ - note that $n$ may be negative - this is used here only  for the stable normal bundle.\\

A vertical $\E$-orientation on a  map $f:M\to X$  is defined to be an $\E$-orientation on its stable normal bundle $\Vv^{\rm st}_f$, or, equivalently, compatibly on each normal bundle $\Vv_{e_l\x f}$. One then has a Thom isomorphism 
$\E^r(M) \to \tilde{\E}^{r+l-q}(M^{\Vv_{e_l\x f}})$
which combines with the Pontryagin-Thom map \eqref{PT} to define the Umkehr/Gysin/integration-over-the-fibre map
\begin{equation}\label{umkehr1}
f_! : \E^r(M) \to \tilde{\E}^{r+l-q}(M^{\Vv_{e_l\x f}}) \stackrel{{\rm P.T.}}{\longrightarrow} \tilde{\E}^{r+l-q}(\Si^l X_+)  \stackrel{\cong}{\to} \E^{r-q}(X), 
\end{equation}
or,  more cleanly, by combining with the stable  Pontryagin-Thom map \eqref{stablePT} as
\begin{equation}\label{umkehr2}
f_! : \E^r(M) \to \tilde{\E}^{r-q}(M^{\Vv^{\rm st}_f}) \stackrel{{\rm P.T.}}{\longrightarrow} \E^{r-q}(X),
\end{equation}
and in case $M$ is itself  $\E$-orientable using \eqref{stablePT2} as
\begin{equation}\label{umkehr3}
f_! : \E^r(M) \to \tilde{\E}^{r-\dim M}(M^{-\Tm}) \stackrel{{\rm P.T.}}{\longrightarrow}    \tilde{\E}^{r-\dim M}(X^{-\Tx}) \to \E^{r-\dim M +\dim X}(X),
\end{equation}
where unmarked arrows in \eqref{umkehr1}, \eqref{umkehr2}, \eqref{umkehr3} are Thom isomorphisms, noting that $\Tm$ and  $\Vv^{\rm st}_f$  are $\E$-orientable imply $\Tx$ is  $\E$-orientable. The Umkehr map $f_!$ is functorial with
\begin{equation}\label{umkehr cup}
f_! (f^* (\s) \cup \om) = \s \cup f_!(\om).
\end{equation}
In the case \eqref{umkehr3}, when $M$ and $X$ are $\E$-orientable, $f_!$ may be constructed in its classical form as the Poincar\'{e}  dual to the  push-forward  $f_*$ in homology  
\begin{equation}\label{umkehrPD}
f_! : \E^r(M)  \stackrel{D}{\longrightarrow} \E_{\dim M - r}(M)  \stackrel{f_*}{\longrightarrow}   \E_{\dim M - r}(X)  \stackrel{D\ii}{\longrightarrow}  \E^{\dim X - \dim M + r}(X).
\end{equation}
The  Poincar\'{e}  duality isomorphisms $D$ are defined by cap product with the respective  $\E$ fundamental  homology classes defined by the orientations on $M$ and $X$. More generally, Poincar\'{e} duality in $\E$ holds  provided that the manifold $X$ is oriented in $\E$ and Spanier-Whitehead duality (S-duality) holds, then the diagram of isomorphisms 
\begin{equation}\label{E Poincare duality}
\begin{tikzcd}
\E^r(X)  \arrow{r}{{\rm {\small Thom}}} \arrow[swap]{dr}[below]{D}  &  \tilde{\E}^{r-\dim X}(X^{-\Tx}) \arrow{d}{S}\\
&  \ \ \ \E_{\dim X - r}(X)
\end{tikzcd}
\end{equation}
commutes. \lemref {v-orient} expresses the equivalence in singular cohomology of the Thom-Pontryagin \eqref{umkehr2} and Poincar\'{e} duality \eqref{umkehrPD} realisations of $f_!$.
\begin{comm}
 Atiyah-Poincar\'{e} duality holds on the multiplicative cohomology $\MSO^*$  in Sect.1 without $X$ being oriented, vertical orientability suffices, using just the Pontryagin-Thom map. In the presence of an orientation this  coincides with \eqref{E Poincare duality}  just as the Umkehr map \eqref{umkehr2}, defined without an orientation on $X$, then coincides with \eqref{umkehr3}. 
The isomorphism $\MSO_n(X):= \MSO_n(X,\Oo_X)\cong \MSO^{\dim X  - n}(X)$  of \cite{Atiyah 1} may be viewed as Poincar\'{e} duality for homology with twisted coefficients in a comparable way to classical singular homology Poincar\'{e}  duality for non-oriented manifolds \cite{Ranicki}. \\  
\end{comm} 
 
The fundamental property of the Umkehr for vertically oriented maps $f:M\to X$ is functoriality:
\begin{prop}
For $\phi: Y \to  X$  let $\mu: \phi^*(M) = \MxY\to Y$ be the fibre product pull-back, for which one has the commutative diagram \eqref{fibre prod diag} relabelled here as
\begin{equation}\label{pullbacksquare}
\begin{tikzcd}
W:= \MxY  \arrow{r}{\beta}  \arrow{d}{\mu} & M \arrow{d}{f} \\
Y \arrow{r}{\phi}   & X.
\end{tikzcd}
\end{equation}
Then,
\begin{equation}\label{reciprocity}
\mu_! \circ \beta^* = \phi^* \circ f_!.
\end{equation}
\end{prop}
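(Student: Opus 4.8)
The plan is to show that the entire Pontryagin--Thom construction of the Umkehr map is natural with respect to the fibre‑product square \eqref{pullbacksquare}. Put $q=\dim M-\dim X$; since \eqref{pullbacksquare} is a fibre product one also has $q=\dim W-\dim Y$, so $f_!$ and $\mu_!$ both lower degree by $q$ and the relevant groups in the diagram below will match. By \eqref{umkehr2}, $f_!$ is the composite of the stable Thom isomorphism $\E^r(M)\to\tilde{\E}^{r-q}(M^{\Vv^{\rm st}_f})$ for the chosen $\E$‑orientation of $\Vv^{\rm st}_f=f^*\Tx-\Tm$ with the map on $\tilde{\E}$ induced by the stable Pontryagin--Thom map $X_+\to M^{\Vv^{\rm st}_f}$ of \eqref{stablePT}, and likewise for $\mu_!$ with $(X,M,f)$ replaced by $(Y,W,\mu)$. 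Assuming, as throughout, that $f$ is transverse to $\phi$ so that $W=\MxY$ is smooth, I would assemble the commuting ladder
\begin{equation*}
\begin{tikzcd}[column sep=large]
\E^r(M) \arrow{r}{\mathrm{Th}} \arrow{d}{\beta^*} & \tilde{\E}^{r-q}(M^{\Vv^{\rm st}_f}) \arrow{r}{\mathrm{P.T.}} \arrow{d}{\bar\beta^*} & \E^{r-q}(X) \arrow{d}{\phi^*} \\
\E^r(W) \arrow{r}{\mathrm{Th}} & \tilde{\E}^{r-q}(W^{\Vv^{\rm st}_\mu}) \arrow{r}{\mathrm{P.T.}} & \E^{r-q}(Y)
\end{tikzcd}
\end{equation*}
whose rows are $f_!$ and $\mu_!$; reading off the outer rectangle then gives $\mu_!\circ\beta^*=\phi^*\circ f_!$, which is \eqref{reciprocity}. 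It remains to check the two inner squares.

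For the left square I would appeal to \propref{induced orientations}. In the present labelling (its $\nu$ is our $\mu$, its $g$ our $\phi$), \eqref{Vv pull back} reads $\Vv^{\rm st}_\mu=\beta^*\Vv^{\rm st}_f$, and the vertical $\E$‑orientation used to build $\mu_!$ is the canonically induced (i.e.\ $\beta$‑pulled‑back) one. Promoting the orientation‑bundle statement of \propref{induced orientations} to Thom classes is routine: the tautological bundle map $\beta^*\Vv^{\rm st}_f\to\Vv^{\rm st}_f$ over $\beta$ induces a Thom‑spectrum map $\bar\beta\colon W^{\Vv^{\rm st}_\mu}\to M^{\Vv^{\rm st}_f}$ with $\bar\beta^*u_{\Vv^{\rm st}_f}=u_{\Vv^{\rm st}_\mu}$. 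Since the Thom isomorphism \eqref{T isom} is $c\mapsto(\mathrm{proj})^*(c)\cup u$ and $\beta^*$ is a multiplicative natural transformation, the left square commutes.

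For the right square I would use naturality of the Pontryagin--Thom collapse. Choose an embedding $e_l\colon M\to\R^l$; then $(e_l\circ\beta,\mu)\colon W\to\R^l\x Y$ is again an embedding ($W\hookrightarrow M\x Y$ and $e_l$ are, and $W$ is compact), and it is exactly the fibre product of $e_l\x f\colon M\to\R^l\x X$ along $\mathrm{id}_{\R^l}\x\phi$. Hence by \eqref{Vv pull back} its normal bundle is $\beta^*\Vv_{e_l\x f}$, and --- using $f\pitchfork\phi$ --- a tubular neighbourhood of $W$ is the $(\mathrm{id}\x\phi)$‑preimage of one for $M$, so the collapse map \eqref{PT} for $\mu$ is the $\beta$‑pullback of that for $f$: the square with sides $\Sigma^l\phi_+$, the collapse $\Sigma^l X_+\to M^{\Vv_{e_l\x f}}$, the collapse $\Sigma^l Y_+\to W^{\Vv_{e_l\x\mu}}$, and $\bar\beta$ commutes, as does the corresponding square of classifying maps into $\MSO(l-q)$. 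Desuspending (passing to the stable category) gives commutativity of the right square.

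The step I expect to be the main obstacle is precisely this last one: arranging the embedding, tubular neighbourhood, collapse and universal classifying map for $\mu$ to be literally the $\beta$‑pullbacks of those for $f$, so that the Pontryagin--Thom data is strictly natural over \eqref{pullbacksquare} --- with the transversality $f\pitchfork\phi$ doing the geometric work --- and checking that this compatibility survives the stabilisation in \eqref{stablePT} and the desuspensions used to define the Thom isomorphisms for the rank $-q$ virtual bundles $\Vv^{\rm st}_f$, $\Vv^{\rm st}_\mu$. This is the familiar naturality of the Thom--Pontryagin construction, but it needs to be set up with some care. In the special case where $M$ and $X$ are $\E$‑orientable one can instead route through the Poincar\'{e}‑duality form \eqref{umkehrPD} of $f_!$ and reduce \eqref{reciprocity} to the standard base‑change identity for the homology pushforward and the cap products attached to \eqref{pullbacksquare}; only the Pontryagin--Thom argument, however, covers the general vertically oriented case.
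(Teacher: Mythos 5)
Your argument has exactly the same structure as the paper's: factor each Umkehr map as (Thom isomorphism) followed by (Pontryagin--Thom collapse) and verify that the resulting two-square ladder built over \eqref{pullbacksquare} commutes, with the left square handled by $\Vv^{\rm st}_\mu=\beta^*\Vv^{\rm st}_f$ from Proposition~\ref{induced orientations} and naturality of \eqref{T isom}. The one genuine difference is how the right square is established. The paper simply cites Frobenius reciprocity (Prop.~12.9 of \cite{CrJa}), which packages the needed commutative square of stable maps
$Y_+ \to X_+$, $W^{\Vv^{\rm st}_\mu}\to M^{\Vv^{\rm st}_f}$
as a black box from fibrewise stable homotopy theory; you instead unwind it geometrically, choosing an embedding $e_l\colon M\to\R^l$, exhibiting $(e_l\circ\beta,\mu)$ as the transverse base-change of $e_l\times f$ along $\mathrm{id}\times\phi$, and matching tubular neighbourhoods so that the collapse maps are literally compatible before desuspending. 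Both are sound; your version is more elementary and self-contained (it shows where the transversality $f\pitchfork\phi$ actually enters), while the paper's citation is faster and transparently applies at the level of Thom spectra without re-deriving the tubular-neighbourhood compatibility. You correctly flag that the naturality of the tubular-neighbourhood/collapse data under transverse base change, and its persistence through stabilisation, is the only part requiring care; this is precisely what the Crabb--James reference is doing for the paper.
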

\vthree
\begin{proof}
Frobenius reciprocity (Prop 12.9 \cite{CrJa}) applied to  \eqref{pullbacksquare} gives a commutative diagram of stable maps
\begin{equation*}
\begin{tikzcd}
Y_+  \arrow{r}{\phi_+}  \arrow{d}{\mu_+} & X_+ \arrow{d}{f_+} \\
W^{\Vv^{\rm st}_\mu}  \arrow{r}{\beta_+}   & M^{\Vv^{\rm st}_f},
\end{tikzcd}
\hten\Longrightarrow \hten
\begin{tikzcd}
 \E^{r-q} (M^{\Vv^{\rm st}_f})  \arrow{r}{f^*_+}  \arrow{d}{\beta^*_+} & \E^{r-q} (X) \arrow{d}{\phi^*} \\
\E^{r-q} ( W^{\Vv^{\rm st}_\mu})       \arrow{r}{\mu^*_+}       & \E^{r-q} (Y),
\end{tikzcd}
\end{equation*}
which concatenates with the naturality of the Thom isomorphism --- that since  $\Vv^{\rm st}_\mu = \beta^*\Vv^{\rm st}_f$ (\ppropref{induced orientations})
$$
\begin{tikzcd}
\E^r(M)  \arrow{r}{\mbox{{\tiny Thom}}}  \arrow{d}{\beta^*}  & \E^{r-q} (M^{\Vv^{\rm st}_f})  \arrow{d}{\beta^*_+}  \\
\E^r(W)       \arrow{r}{\mbox{{\tiny Thom}}}    &\E^{r-q} ( W^{\Vv^{\rm st}_\mu})     \end{tikzcd}
$$  commutes ---  %Rudyak p.301
to give the commutativity of 
\begin{equation*}
\begin{tikzcd}
E^r(M)  \arrow{r}{\mbox{{\tiny Thom}}}  \arrow{d}{\beta^*}  & \E^{r-q} (M^{\Vv^{\rm st}_f})  \arrow{r}{f^*_+}  \arrow{d}{\beta^*_+} & \E^{r-q} (X) \arrow{d}{\phi^*} \\
E^r(W)       \arrow{r}{\mbox{{\tiny Thom}}}    &\E^{r-q} ( W^{\Vv^{\rm st}_\mu})       \arrow{r}{\mu^*_+}       & \E^{r-q} (Y),
\end{tikzcd}
\end{equation*}
and hence \eqref{reciprocity}.
\end{proof}

\vthree 

\section{Vertical characteristic classes}

We may apply the Umkehr/Gysin/integration-over-the-fibre for vertically oriented maps to define for each multi-index $J=(j_1, j_2, \dots, j_r)\<\N^\oo$ a generalised characteristic class 
$$\p^{\mso}_J : \MSO^*(X) \ox \Q \to \H^*(X,\Q)$$ 
by 
\begin{equation}\label{pJ}
\p^{\mso}_J(\om) = f^\om_!(\p_J(-\Vv^{\rm st}_{f^\om})),
\end{equation}
\vtwo
where $f^\om:M\to X$ is a representative for the dual of $\om = D^{\mso} [f^\om]$, with  $D^{\mso}$  the Atiyah-Poincar\'{e} duality isomorphism of Sect.1 -- this is independent of the choice of $f^\om \in[f^\om]$, see \thmref{pJf}.  $[f]$  abbreviates a homology class $[f,M,\a]\in \MSO_*(X)\ox\Q$. On the right-hand side, $\p_J(\zeta) = p_{j_1}(\zeta)\cdots p_{j_r}(\zeta)\in \H^{4|J|}(M,\Q)$ is the $J^{\rm th}$ Pontryagin class of a stable bundle $\zeta\in \KO(M)$. \\
 
Here, a characteristic class 
$\a(E)= 1+ \a_1(E) + \dots + \a_m(E)$, $\a_r(E) \in \H^r(M, R)$, 
on a semigroup of vector bundles $E$ which  on trivial bundles is equal to $1$  and has the exponential 
property $
\a(E + F)=\a(E) \cdot \a(F)$ is stable provided there are no odd degree terms $\a_{2j+1}(E)$, or without restriction on the $\a_r(E)$ if $R=\Z_2$. In these cases
 $\a$ extends to virtual bundles $E-E'$ in the corresponding $\K$-theory ring as
$\a(E-E'):=\a(E)/\a(E')$ in $\H^*(M, R),$ so
$
\a_0(E-E')=1,$  $\a_1(E-E')=\a_1(E)-\a_1(E'),$ $\a_2(E-E')=\a_2(E)-\a_1(E)\a_1(E')-\a_2(E'),$
and so on.  The total Chern class, the rational total Pontryagin class, and the total Stiefel-Whitney classes are each stable classes and defined on the corresponding stable normal bundle $\Vv^{\rm st}_f$.\\

The rational total Pontryagin class has the form
\begin{equation*}
\p(-\Vv^{\rm st}_f):=p(\Tm - f^* \Tx) = 1 + \p_1(-\Vv^{\rm st}_f) + \p_2(-\Vv^{\rm st}_f) + \cdots 
\end{equation*}
with $\p_j(-\Vv^{\rm st}_f) \in \H^{4j}(M, \mathbb{Z})$, and 
\begin{align*}
\p_J(-\Vv^{\rm st}_f):=\p_{j_1}(-\Vv^{\rm st}_f)\p_{j_2}(-\Vv^{\rm st}_f)\cdots \p_{j_r}(-\Vv^{\rm st}_f) \in \H^{4|J|}(X, \mathbb{Z}).
\end{align*}
The Umkehr map of this is the vertical characteristic class \eqref{pJ}. If $f=\pi$ is a submersion defining $[\pi]  \in \etaaSOX$ then 
\begin{equation*}
 \p^{\mso}_J(D^{\mso}[\pi]) := \pi_!(\p_J(T_\pi M)) = \int_{\mbox{$\mathsmaller{M/X}$}} \p_J(\mbox{$\mathsmaller{M/X}$})
\end{equation*}
 associated to   the Pontryagin class of the tangent bundle along the fibres $T_\pi M$. The  notation  
$\int_{\MX}$ refers to the de Rham realization of $\pi_!$ as  integration over the fibre on a Chern-Weil representative $\p_J(\mbox{$\mathsmaller{M/X}$})$ of the given classes. \\

The choice of orientation $\a$ in $[f^\a]:=[f]=[f,M,\a]$ has been suppressed in the above, but the vertical Pontryagin class is sensitive to it, even though   $\p_J(-\Vv^{\rm st}_f)$ is not. When the  orientation $\a$ on  $\Vv^{\rm st}_f$ is reversed  to $-\a$ the Thom class then changes sign and hence
$$f^{-\a}_! = - f^\a_! \, ,$$ and hence: 
\begin{lem}\label{alphasign}  
\begin{equation}\label{pJalpha}
\p^{\mso}_J(D^{\mso}[f,M,-\a])= - \,\p^{\mso}_J(D^{\mso}[f,M,\a]).
\end{equation}
\end{lem}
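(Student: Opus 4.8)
The statement to prove is \lemref{alphasign}, namely that reversing the vertical orientation $\a$ negates the vertical Pontryagin class $\p^{\mso}_J$. The plan is to isolate exactly where the orientation data enters the definition \eqref{pJ}, and show it enters only through the Umkehr map $f^\om_!$ in a way that flips sign.

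First I would recall from \S3 that the Umkehr map $f_!$ is built from the Pontryagin-Thom collapse \eqref{stablePT} composed with the Thom isomorphism $\phi_{\Vv^{\rm st}_f} : \E^r(M) \to \tilde\E^{r-q}(M^{\Vv^{\rm st}_f})$, and that a vertical $\E$-orientation on $f$ is \emph{by definition} a choice of Thom class $u = u_{\Vv^{\rm st}_f} \in \tilde\E^{-q}(M^{\Vv^{\rm st}_f})$ on the stable normal bundle. The Thom isomorphism sends $c \mapsto \mu^*(c) \cup u$, so it is linear in $u$. The key elementary observation is that reversing the orientation $\a \mapsto -\a$ replaces the Thom class $u$ by $-u$: indeed, a change of orientation of a real line bundle (here the determinant line $\Det \Tm \ox \Det f^*\Tx$ whose trivialisations are the vertical orientations, by \lemref{v-orient}) negates the associated Thom class, and for a stable bundle the Thom class is assembled multiplicatively from the determinant-line contribution, so it too changes sign. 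Consequently $\phi^{-\a}_{\Vv^{\rm st}_f} = -\,\phi^{\a}_{\Vv^{\rm st}_f}$, and since the Pontryagin-Thom map \eqref{stablePT} and the final Thom isomorphism $\tilde\E^{r-q}(\Si^l X_+) \cong \E^{r-q}(X)$ do not involve the vertical orientation of $f$, we get $f^{-\a}_! = - f^\a_!$ as asserted in the line preceding the lemma.

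Next I would feed this into \eqref{pJ}. The point is that the argument $\p_J(-\Vv^{\rm st}_{f^\om})$ of $f^\om_!$ is a Pontryagin class of the \emph{underlying} stable bundle $\Vv^{\rm st}_f = f^*\Tx - \Tm \in \KO(M)$, which depends on $f$ and $M$ but not at all on the chosen orientation isomorphism $\a$ of $\Oo_N \cong f^*\Oo_X$ — Pontryagin classes are orientation-insensitive (this is already remarked in the paragraph before the lemma). Therefore when we pass from $[f,M,\a]$ to $[f,M,-\a]$ the class $\p_J(-\Vv^{\rm st}_f) \in \H^{4|J|}(M,\Q)$ is unchanged, and only the outer map changes, from $f^\a_!$ to $f^{-\a}_! = -f^\a_!$. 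Applying $-f^\a_!$ to the unchanged argument gives $-\p^{\mso}_J(D^{\mso}[f,M,\a])$, which is \eqref{pJalpha}.

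The one point requiring genuine care — the main obstacle — is justifying $f^{-\a}_! = -f^\a_!$ rigorously at the stable/Thom-class level, i.e. that reversing a trivialisation of the determinant line $\Det \Tm \ox \Det f^*\Tx$ negates the stable Thom class in $\tilde\H^{*}(M^{\Vv^{\rm st}_f};\Q)$. For an honest rank-$n$ bundle $V \to M$ with $n>0$ this is the classical fact that the Thom class is a generator of $\tilde\H^n(\mathrm{Th}(V_m)) \cong \tilde\H^n(S^n) \cong \Z$ and a choice of generator is exactly an orientation, the two choices differing by $-1$; the subtlety here is that $\Vv^{\rm st}_f$ is a \emph{virtual} bundle of (possibly negative) rank $-q$, so one must invoke the stable Thom isomorphism construction from \S3, $M^{W-W'} = M^{W+W''-\ul l}$ with $W'+W'' = \ul l$, and note that the orientation data and hence the sign ambiguity is carried entirely by the determinant line $\Det V$ of a genuine representative, which appears linearly in the Thom class. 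Once this bookkeeping is in place — and it is exactly the content already asserted in the sentence ``the Thom class then changes sign and hence $f^{-\a}_! = -f^\a_!$'' — the lemma is immediate. I would present the proof in two lines: cite the preceding sign computation for the Umkehr map, observe that $\p_J(-\Vv^{\rm st}_f)$ is orientation-independent, and apply \eqref{pJ}.
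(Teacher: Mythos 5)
Your proposal is correct and follows exactly the paper's argument, which is contained in the sentence immediately preceding the lemma: reversing $\a$ negates the Thom class on $\Vv^{\rm st}_f$, hence negates $f_!$, while $\p_J(-\Vv^{\rm st}_f)$ itself is orientation-insensitive. Your extra care about the virtual-bundle bookkeeping for the stable Thom class is a reasonable elaboration of what the paper takes as given, but it does not change the route.
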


\vthree 

We note also the naturality  of the vertical classes. 
\begin{prop}
For a map $\phi: Y \to  X$ one has for each $\om\in\MSO^*(X)$
\begin{equation}\label{pJ pullback}
\phi^*\p^{\mso}_J(\om) = \p^{\mso}_J(\phi^*\om).
\end{equation}
\end{prop}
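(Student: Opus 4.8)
The plan is to reduce the naturality statement \eqref{pJ pullback} to two already-established facts: the functoriality of the Umkehr map under base change, Proposition with identity \eqref{reciprocity}, and the naturality of ordinary Pontryagin classes. First I would unwind the definition \eqref{pJ}. Choose a vertically oriented representative $f=f^\om:M\to X$ with $\om=D^{\mso}[f,M,\a]$, so that $\p^{\mso}_J(\om)=f_!\bigl(\p_J(-\Vv^{\rm st}_f)\bigr)$. Then form the fibre-product pull-back square \eqref{pullbacksquare}, writing $W=\MxY$ with projections $\beta:W\to M$ and $\mu:W\to Y$; by \propref{induced orientations} the map $\mu$ is canonically vertically oriented with $\Vv^{\rm st}_\mu=\beta^*\Vv^{\rm st}_f$, and $[\mu,W,\beta^*\a]$ represents $\phi^*\om$ under the duality isomorphism. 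This last point — that Atiyah-Poincar\'e duality intertwines the homological transfer \eqref{msoumkehr} with the cohomological pull-back $\phi^*$ — is exactly the compatibility already recorded when the duality map was constructed in Sect.~1 (the commutativity of the Thom-Pontryagin construction with $\phi_+$, which is the stable input to Proposition's proof). So $\p^{\mso}_J(\phi^*\om)=\mu_!\bigl(\p_J(-\Vv^{\rm st}_\mu)\bigr)$.

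Next I would compute both sides. On the one hand, $\p_J(-\Vv^{\rm st}_\mu)=\p_J(-\beta^*\Vv^{\rm st}_f)=\beta^*\p_J(-\Vv^{\rm st}_f)$ by naturality of Pontryagin classes (and the fact that $\p_J$ of a stable bundle only depends on the $\K$-theory class, as noted in Sect.~4). Hence
\begin{equation*}
\p^{\mso}_J(\phi^*\om)=\mu_!\bigl(\beta^*\p_J(-\Vv^{\rm st}_f)\bigr).
\end{equation*}
On the other hand, applying $\phi^*$ to the definition of $\p^{\mso}_J(\om)$ gives $\phi^*\p^{\mso}_J(\om)=\phi^*f_!\bigl(\p_J(-\Vv^{\rm st}_f)\bigr)$. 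Now invoke \eqref{reciprocity}, which says precisely $\mu_!\circ\beta^*=\phi^*\circ f_!$ for the square \eqref{pullbacksquare}. Applying this identity to the class $\p_J(-\Vv^{\rm st}_f)\in\H^*(M,\Q)$ equates the two displayed expressions, yielding \eqref{pJ pullback}.

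The only genuine subtlety — and the step I would present most carefully — is the well-definedness/compatibility bookkeeping: one must know that $f_!$ in \eqref{pJ} does not depend on the chosen representative $f^\om$ (which is \thmref{pJf}, assumed available), and that the particular representative $\mu:W\to Y$ arising from the fibre product is an admissible representative of the dual of $\phi^*\om$, with the orientation $\beta^*\a$ the correct induced one. Both are consequences of the constructions in Sect.~1–2: the first from the transfer being defined on bordism classes, the second from \propref{induced orientations} together with the fact that the duality map is built from the Pontryagin-Thom collapse, which is manifestly natural in $X$. A minor point to flag is the orientation signs of \lemref{alphasign}: since $\phi^*$ and the base-change transfer both respect the canonical induced orientations, no sign discrepancy appears, so the identity \eqref{pJ pullback} holds on the nose rather than up to sign. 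Everything else is formal diagram-chasing, so I do not anticipate a real obstacle; the write-up is essentially "unwind \eqref{pJ}, pull back, apply \eqref{reciprocity}."
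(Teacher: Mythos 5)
Your proposal is correct and follows essentially the same route as the paper: both reduce \eqref{pJ pullback} to \eqref{reciprocity}, naturality of $\p_J$, the stable identity $\Vv^{\rm st}_\mu=\beta^*\Vv^{\rm st}_f$ from \propref{induced orientations}, and the compatibility $D^{\mso}[\mu]=\phi^*D^{\mso}[f]$. The one place where you cite and the paper argues is precisely that last compatibility: you assert it was ``already recorded'' in the construction of the duality map, but the paper does not record it there; instead it proves it inside this very proof, by chasing the Pontryagin--Thom classifying maps through a diagram whose top square comes from Frobenius reciprocity and whose bottom triangle commutes because $\Vv^{\rm st}_\mu=\beta^*\Vv^{\rm st}_f$ in $\K(W)$ -- so a complete write-up should supply that short diagram argument rather than reference it.
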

\begin{proof} 
Writing $\om= D^{\mso}[f]$ and with reference to the diagram of \eqref{pullbacksquare}, we have 
\begin{eqnarray*}
\phi^*\p^{\mso}_J(\om) & = & \phi^* f_!(\p_J(-\Vv^{\rm st}_{f})) \\
& \stackrel{\eqref{reciprocity}}{=}  & \mu_! (\beta^*(\p_J(-\Vv^{\rm st}_f)))  \\
& =  & \mu_! (\p_J(-\beta^*\Vv^{\rm st}_f)  \\
& = & \mu_! (\p_J(-\Vv^{\rm st}_\mu)) \\
& = & \p^{\mso}_J(\om^\mu)
\end{eqnarray*}
with  $\om^\mu = D^{\mso}[\mu]$. The fourth equality is the  stable equivalence \eqref{Vv pull back} of $\Vv^{\rm st}_\mu$ and $\beta^*(\Vv^{\rm st}_f)$. The claim, then, setting $\om^f :=\om$, is that
\begin{equation}\label{ommu=omf}
\om^\mu = \phi^* \om^f .
\end{equation}
But, from Sect.1,  $ \om^f $ is the Pontryagin-Thom classifying map
$$X_+ \stackrel{f_+}{\too} M^{\Vv^{\rm st}_f} \too \MSO_{-q},$$
which pulls-back by $\phi$ to 
$$Y_+ \stackrel{\phi_+}{\too}  X_+ \stackrel{f_+}{\too} M^{\Vv^{\rm st}_f} \too \MSO_{-q},$$
while $ \om^\mu $ is the Pontryagin-Thom classifying map
$$Y_+ \stackrel{\mu_+}{\too} M^{\Vv^{\rm st}_\mu} \too \MSO_{-q},$$
and these fit together in the diagram
\begin{equation*}
\begin{tikzcd}[row sep=scriptsize, column sep=1mm]
Y_+ \arrow{rr}{\phi_+}  \arrow{dd}{\mu_+} & & X_+  \arrow{dd}{f_+} &  \\ 
 &   &  &  \\ 
 (M\xX Y)^{\Vv^{\rm st}_\mu}  \arrow{rr}{\beta_+} \arrow{dr} & & M^{\Vv^{\rm st}_f}  \arrow{dl} &  \\ 
 & \MSO_{-q}  &  & 
\end{tikzcd}.
\end{equation*} 
The top square we know to commute from \eqref{pullbacksquare}, while the lower triangle commutes up to homotopy because 
$\Vv^{\rm st}_\mu= \beta^*(\Vv^{\rm st}_f)$ in $\K(M)$. Hence \eqref{ommu=omf} holds in $[Y_+, \MSO_{-q}]$, i.e. in  $\MSO^{-q}(Y)$. 
\end{proof}

\vtwo 

The characteristic classes \eqref{pJ} are well-defined  invariants of the vertically oriented bordism class $[f,M,\a]$: \\

\begin{theorem}\label{pJf}
If $(f,M,\a) \sim (f\pr,M\pr,\a\pr)$ (are bordant in $\MSO_*(X)$) then 
\begin{equation}\label{vert  pJ}
f_!(\p_J(-\Vv^{\rm st}_f)) = f\pr_!(\p_J(-\Vv^{\rm st}_{f\pr}))
\end{equation}
 in $\H^{*}(X, \Q)$ for each $J\in\N^\oo$. Hence $\p_J(D^{\mso} ([f])):=f_!(\p_J(-\Vv^{\rm st}_f))$ gives a well-defined   homomorphism of abelian groups
\begin{equation*}
\p^{\mso}_J: \MSO^*(X) \ox \Q\to \H^{*}(X, \Q). 
\end{equation*}
\end{theorem}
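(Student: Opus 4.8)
The plan is to show that the value $f_!(\p_J(-\Vv^{\rm st}_f))$ depends only on the bordism class $[f,M,\alpha]\in\MSO_*(X)\ox\Q$, by the standard "bordism kills characteristic numbers" mechanism, carried out at the level of the Umkehr/Gysin construction rather than with numbers. So suppose $(f,M,\alpha)\sim(f',M',\alpha')$, witnessed by a triple $(\sigma,W,\lambda)$ with $\sigma:W\to X$, $\partial W\cong M\sqcup M'$, $\sigma|_M=f$, $\sigma|_{M'}=f'$, and the compatible orientation $\lambda$ on $\Vv^{\rm st}_\sigma$ restricting to $\alpha$ and $-\alpha'$ on the two boundary pieces. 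The key observation is that the inclusions $i:M\hookrightarrow W$ and $i':M'\hookrightarrow W$ pull back the stable normal data of $\sigma$ to that of $f$ and $f'$: since $\sigma\circ i=f$ we have $i^*\Vv^{\rm st}_\sigma = i^*(\sigma^*\Tx - \Tw)= f^*\Tx - i^*\Tw$, and because $M$ is a component of $\partial W$ the bundle $i^*\Tw$ differs from $\Tm$ only by a trivial line (the inward normal), so $i^*\Vv^{\rm st}_\sigma = \Vv^{\rm st}_f$ as \emph{stable} bundles, compatibly with orientations. Hence $i^*\p_J(-\Vv^{\rm st}_\sigma)=\p_J(-\Vv^{\rm st}_f)$, and likewise on $M'$.

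The next step is to run the Umkehr map for the non-closed manifold $W$. Choosing an embedding $W\to\R^l$ compatible with the boundary embeddings, the Pontryagin--Thom construction produces a stable map $X_+\to W^{\Vv^{\rm st}_\sigma}$ whose restrictions to the boundary collars recover the Pontryagin--Thom maps of $f$ and $f'$; this gives a well-defined Umkehr $\sigma_!:\H^*(W,\Q)\to\H^*(X,\Q)$ and a commuting-boundary relation of the form $\sigma_!\circ i^* $ combining the two boundary contributions with the relative sequence of $(W,\partial W)$. The cleanest way to package this: the element $\p_J(-\Vv^{\rm st}_\sigma)\in\H^{4|J|}(W,\Q)$ is a global class on $W$, so applying the functoriality of Umkehr for the boundary inclusion, together with the elementary fact that the Gysin push-forward of a class that extends over the bounding manifold $W$ is zero on $\partial W$ (equivalently: $\partial W = \partial W$ bounds, so its fundamental class vanishes in $\H_*(W)$, hence $(\partial\text{-inclusion})_!\, i^*(c)=0$ for any $c\in\H^*(W)$), yields $f_!(i^*c) - f'_!(i'^*c)=0$, where the minus sign is exactly the orientation flip $-\alpha'$ encoded in $\lambda$ and handled via \lemref{alphasign}. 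Setting $c=\p_J(-\Vv^{\rm st}_\sigma)$ and using $i^*c=\p_J(-\Vv^{\rm st}_f)$, $i'^*c=\p_J(-\Vv^{\rm st}_{f'})$ gives \eqref{vert pJ}. That $\p^{\mso}_J$ is then a homomorphism of abelian groups follows from additivity of the Pontryagin--Thom construction under fibrewise disjoint union, i.e. $(f\uX f')_! = f_!\oplus f'_!$ on the wedge, together with $\p_J$ being additive in the relevant sense on the disjoint bundles.

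Finally, one should check independence of the choice of representative $f^\omega$ \emph{within} a fixed homotopy/bordism class, but this is a special case of the above: two choices are bordant over $X$ by a cylinder-type bordism (or one invokes that homotopic maps are $\MSO_*$-bordant), so \eqref{vert pJ} already covers it. \textbf{The main obstacle} I anticipate is the careful bookkeeping of orientations and the trivial-line-bundle discrepancy between $i^*\Tw$ and $\Tm$ at the boundary: one must verify that the $\E$-orientation $\lambda$ on $\Vv^{\rm st}_\sigma$ restricts to $\alpha$ on $M$ and to $-\alpha'$ on $M'$ \emph{after} this stabilization, so that the signs in the boundary formula match the sign in \lemref{alphasign} and the two terms genuinely cancel rather than add. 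Everything else --- the Pontryagin--Thom construction for manifolds with boundary, naturality of Thom isomorphisms, and vanishing of the boundary push-forward --- is standard and parallels the classical proof that Pontryagin numbers are cobordism invariants, now lifted from $\H^{\dim}(\cdot,\Q)$-valued numbers to $\H^*(X,\Q)$-valued Umkehr images.
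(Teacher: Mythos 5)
Your plan is the same as the paper's: reduce to the case $[f,M,\a]\sim 0$, observe that $\p_J(-\Vv^{\rm st}_f)$ extends over the bounding manifold $W$ because $i^*\Vv^{\rm st}_\s$ and $\Vv^{\rm st}_f$ agree stably (the paper records this as $i^*\Vv^{\rm st}_\s = \Vv^{\rm st}_f - \ul 1$), and then argue that the Umkehr of a class extending over $W$ is killed. The sign bookkeeping via \lemref{alphasign} is also exactly what the paper does.

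However, the way you justify the vanishing has a genuine gap. Your parenthetical ``equivalently: $\partial W$ bounds, so its fundamental class vanishes in $\H_*(W)$'' cannot be run as stated: $W$, $M$, $X$ are not assumed orientable, and with $\Q$ coefficients a non-orientable compact $W^{n+1}$ has $\H_{n+1}(W,\partial W;\Q)=0$, so there is no fundamental class and Lefschetz--Poincar\'e duality does not hold naively. This is precisely the obstacle the paper resolves by introducing the twisted coefficient system $\Z^{\sww}$ (singular chains of the orientation double cover tensored over $\Z[\Z_2]$), noting that a vertical orientation $\mathcal{O}_M\cong f^*\mathcal{O}_X$ gives a canonical bundle map $\mathcal{O}_M\to\mathcal{O}_X$ and hence a push-forward $f^{\sww}_*$ on twisted homology, so the Umkehr extends to vertically-oriented $f:M\to X$ and $\s:W\to X$ as in \eqref{nonorientable umkehr} and \eqref{nonorientable umkehr with boundary}. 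The second thing you leave implicit is the actual mechanism connecting $\s_!$, $f_!$, and the relative class. ``Functoriality of Umkehr for the boundary inclusion'' by itself does not give what you need; the paper extracts from Dold's compatibility of (Lefschetz--)Poincar\'e duality with the long exact sequence of $(W,M)$ --- the commutative array \eqref{diagramusefulforendofproof} --- the precise identity $f_!=\s_!\circ\delta$ \eqref{nicecommutativityproperty}, so that $f_!(\p_J(-\Vv^{\rm st}_f))=\s_!\big(\delta\,i^*\p_J(-\Vv^{\rm st}_\s)\big)=0$ by exactness. Your alternative phrasing via the Pontryagin--Thom map for $W$ is essentially what the paper sketches in the Comment following the theorem, but there too one needs the analogous commuting diagram with the relative Thom space $W^{\Vv^{\rm st}_\s}$, which you have not written out. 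Finally, the paper also treats separately the invariance under vertical-orientation-preserving (even merely topological, via Novikov) diffeomorphisms of representatives, which your ``cylinder bordism'' remark subsumes only if one has already set up the bordism relation to include such identifications.
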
 
 
\vtwo  

\begin{proof}
In singular cohomology the Poincar\'{e} duality  construction \eqref{umkehrPD} of the Umkehr map extends to vertically-oriented $f$. Precisely, Poincar\'{e} duality holds for a closed non-orientable manifold $N$  in singular cohomology  as a cap product isomorphism
\begin{equation*}
\H^r(N,\Z) \stackrel{D}{\to} \H_{\dim N - r} (N,\Z^{\sww})  
\end{equation*}
with $\H_k(N,\Z^{\sww})$ the homology of the twisted singular chain complex \mbox{$S_k(\mathcal{O}_N)\ox_{\Z[\Z_2]}\Z^-$} of the orientation double cover $\mathcal{O}_N$, where   $\Z^-$ is $\Z$ as the right $\Z[\Z_2]$-module in which the generator of $\Z_2$ acts by $-1$, reducing to $\H_k(N,\Z)$ if $N$ is orientable; see Ch.4 of \cite{Ranicki}. The point here is that $f:M\to X$ being vertically oriented means a specific isomorphism   $ \a_{\mbox{\tiny M,X}} : \mathcal{O}_M \to f^* \mathcal{O}_X$ and hence a canonical bundle map $  \mathcal{O}_M \to  \mathcal{O}_X$ and hence a map of twisted chain complexes defining a push-forward 
\begin{equation*}
 f^{\sww}_*: \H_k (M,\Z^{\sww})   \to  \H_k (X,\Z^{\sww}),
\end{equation*}
and hence the extension of   \eqref{umkehrPD} as
\begin{equation}\label{nonorientable umkehr}
f_! : \H^r(M,\Z)  \stackrel{D}{\longrightarrow} \H_{\dim M - r}(M,\Z^{\sww})    \stackrel{f^{\sww}_*}{\longrightarrow}  \H_{\dim M - r}(X,\Z^{\sww})    \stackrel{D\ii}{\longrightarrow}  \H^{r-q}(X,\Z).
\end{equation} 
 
This extends to a vertically oriented map $\s: W \to X$ on a manifold $W$ with boundary $M := \pd W\neq \emptyset$ (defining a bordism) in an analogous manner as
\begin{equation}\label{nonorientable umkehr with boundary}
\s_! : \H^r(W,M, \Z)  \stackrel{D}{\longrightarrow} \H_{\dim M - r}(W,\Z^{\sww})    \stackrel{\s^{\sww}_*}{\longrightarrow}  \H_{\dim M - r}(X,\Z^{\sww})    \stackrel{D\ii}{\longrightarrow}  \H^{r-q}(X,\Z),
\end{equation}
where $D$, here, is Leftschetz-Poincar\'{e} duality. \\

For $(f,M,\a), (f\pr,M\pr,\a\pr)$ representing elements of $\MSO_n(X)$, the difference element is represented by $(f\sqcup f\pr, M \sqcup M\pr,  \a\sqcup (-\a\pr))$ with, using \lemref{alphasign}, vertical Pontryagin class
$$(f\sqcup f\pr)_!(\p_J(-\Vv^{\rm st}_{(f\sqcup f\pr,  \a\sqcup (-\a\pr)})) = f_!(\p_J(-\Vv^{\rm st}_{(f,\a)})) - f\pr_!(\p_J(-\Vv^{\rm st}_{(f\pr,\a\pr)}))$$
\eqref{vert  pJ} is thus equivalent to $(f\sqcup f\pr)_!(\p_J(-\Vv^{\rm st}_{(f\sqcup f\pr,  \a\sqcup (-\a\pr)})) =0$ if $(f,M,\a) \sim (f\pr,M\pr,\a\pr)$. So it is enough to show 
\begin{equation}\label{vert  pJ 0}
f_!(\p_J(-\Vv^{\rm st}_f)) = 0 \hfive \ffor \ \  (f,M,\a)\sim 0 \ \ \iin \  \ \MSO_n(X),
\end{equation}
that is, for $f:M\to X$ the restriction to  $\partial W =M$ of vertically oriented $\sigma: W \to  X$, so  
\begin{equation}\label{s o i =f}
\sigma\circ i = f
\end{equation}
 where $i:M\to W$
is the inclusion map, and check $f_!(\p_J(-\Vv^{\rm st}_f))$ is invariant with respect  to fibrewise diffeomorphisms of $M\to N$ preserving the vertical orientation. \\

To see \eqref{vert  pJ 0}, there is the commutative diagram
\begin{equation}\label{diagramusefulforendofproof}
\begin{tikzcd}
 \H^k(W,\Z) \arrow{r}{i^*} \arrow{d} & \H^k(M,\Z)\arrow{r}{\delta} \arrow{d} & \H^{k+1}(W, M,\Z)\arrow{r}\arrow{d} & \H^{k+1}(W,\Z)\arrow{d}  \\
 \H_{m +1 - k}(W, M,\Z^{\sww}) \arrow{r} & \H_{m-k}(M,\Z^{\sww}) \arrow{r}{i_*} & \H_{m-k}(W,\Z^{\sww})\arrow{r} & \H_{m-k}(W, M,\Z^{\sww})\end{tikzcd}
\end{equation}
in which the vertical maps are the Poincar\'e duality maps and the horizontal sequences are exact. This is  Prop 9., Ch. 8,  of  \cite{Dold} modified to Poincar\'{e} duality for vertically oriented maps. Combining with \eqref{nonorientable umkehr} and \eqref{nonorientable umkehr with boundary}  one obtains 
\begin{equation}\label{keystone}
\begin{tikzcd}[row sep=scriptsize, column sep=scriptsize] %[rr, bend left=30, leftarrow, crossing over]
& \H_{m-k}(W,\Z^{\sww}) \arrow[leftarrow]{dl}[above]{i_*}  \arrow[rr, leftarrow] \arrow[bend left=30]{dd} & & \H^{k+1}(W, M,\Z) \arrow[leftarrow]{dl}[above]{\delta} \arrow[bend left=30, dashed]{dd}{\sigma_!} \\ 
\H_{m-k}(M,\Z^{\sww})  \arrow{dr}{f_*} \arrow[rr,crossing over, leftarrow]  & & \H^k(M,\Z) \arrow[dashed]{dr}[above]{f_!} \\ 
& \H_{m-k}(X,\Z^{\sww})  \arrow[rr] & & \H^{k-q}(X,\Z) 
\end{tikzcd}
\end{equation}
(where $\H_{m-k}(W,\Z^{\sww}) \to  \H_{m-k}(X,\Z^{\sww})$ is $\sigma_*$) in which the solid arrows form a commutative array. That commutativity implies that the right-hand triangle also commutes
\begin{align}\label{nicecommutativityproperty}
f_!= \sigma_! \circ \delta.
\end{align}
Consider 
$$\p_J(-\Vv^{\rm st}_f)\in \H^{4|J|}(W,\Z).$$
The boundary $M= \pd W$ has a collar neighbourhood $\mathcal{U}_M:= \partial W \times [0, 1) \<W$ and so the restriction of $\Tw $ to $\pd W$ has the form 
$ i^* \Tw  \cong \Tm  + \ul 1.$
Hence 
\begin{equation}\label{VfVs}
i^* \Vv^{\rm st}_\s  = i^* (\s^* \Tx  - \Tw ) = i^* \s^* \Tx  - \Tm  - \ul 1 = f^* \Tx - \Tm  - \ul1 = \Vv^{\rm st}_f - \ul1.
\end{equation}
Hence, as since $\p_J$ is stable,
$\p_J(-\Vv^{\rm st}_f)   = \p_J(-\Vv^{\rm st}_f + \ul1)   = i^* \p_J(-\Vv^{\rm st}_\s).$
Thus, 
$$
f_!(\p_J(-\Vv^{\rm st}_f)) =  f_! i^* \p_J(-\Vv^{\rm st}_\s)
 \stackrel{(\ref{s o i =f})}{=}  \sigma_! \circ \delta (  i^* \p_J(-\Vv^{\rm st}_\s)) = \sigma_!  (\delta \circ i^*)(\p_J(-\Vv^{\rm st}_\s)) = 0$$
by the exactness of  (\ref{diagramusefulforendofproof}). \\

To see that $f_!(\p_J(-\Vv^{\rm st}_f)) = g_!(\p_J(-\Vv^{\rm st}_g)) $ for $f:M\to X$ and $g:N\to X$ vertically oriented maps with
$f = g\circ \psi$
and $\psi : M\to N$ a vertical-orientation preserving diffeomorphism, one in this case has $\Tm\cong \psi^* \Tn$, and hence  $\Vv^{\rm st}_f  = \psi^* \Vv^{\rm st}_g$, and hence 
\begin{align*} 
f_!(\p_J(-\Vv^{\rm st}_f))= g_!\psi_! \p_J(-\psi^* \Vv^{\rm st}_g) & =g_!(\psi_!( \psi^* \p_J(-\Vv^{\rm st}_g))) \\ 
& = g_!(\p_J(-\Vv^{\rm st}_g)\psi_! (1)) \\ & = g_!(\p_J(- \Vv^{\rm st}_g))
\end{align*}
since for a diffeomorphism $\psi_! (1)=\pm 1$ with the positive sign taken if preserving the vertical orientation. \\

This continues to hold even for $\psi$ a homeomorphism, preserving vertical orientation. For, in this case Novikov has shown that the rational Pontryagin classes are  topological invariants --- one has 
$\psi^* \p_j(N) = \p_j(M)$ over $\Q$. But $\p(N) = \p(-\Vv^{\rm st}_g) g^* \p(X),$ with $p$ the total Pontryagin class, and a similar formula for $p(M)$. Equating using the Whitney sum formulae, one iteratively infers   $\phi^* \p_j(-\Vv^{\rm st}_g)= \p_j(-\Vv^{\rm st}_f)$ for all $j$, and the result follows as before. 
\end{proof} 

This gives one direction of Theorem 1. \\

Corresponding statements are immediate for the vertical Chern and Stiefel-Whitney classes on, respectively, $\MU^*(X)$ and $\MO^*(X)$: one has 
$$\c^{\muu}_J : \MU^*(X) \to \H^r(X,\Z)$$
defined by 
\begin{equation*}
\c^{\muu}_J(D^{\muu} [f]) = f_!(\c_J(-\Vv^{\rm st}_f)),
\end{equation*}
\vtwo
where $\c_J(-\Vv^{\rm st}_f) := \c_J(\zeta) = \c_{j_1}(\zeta)\cdots \c_{j_r}(\zeta)$ is the  $J^{\rm th}$ Chern class of  a stably equivalent complex virtual bundle $\zeta\in \K(M)$ defining the stable vertical complex structure on $-\Vv^{\rm st}_f$ --- this is independent of the choice of $\zeta$. Since a stable complex structure on  $\Vv^{\rm st}_f$  defines an orientation on it, the above proof also implies 
\begin{equation*}
\mu =0 \ {\rm in}  \ \MU^*(X) \  \Rightarrow  \c^{\muu}_J (\mu) =0 \  \,\forall J\<\N^\oo.
\end{equation*}

Likewise, 
vertical Stiefel-Whitney classes
$$\sw^{\mo}_J : \MO^*(X) \to \H^r(X,\Z_2)$$ are
defined by 
\begin{equation*}
\sw^{\mo}_J(D^{\mo} [f]) = f_!(\sw_J(-\Vv^{\rm st}_f)),
\end{equation*}
which collectively vanish when $f$ is an unoriented boundary map. 
\vtwo
In the latter case matters are simplified because any manifold is  $\H\Z_2$-oriented, so the proof of \thmref{pJf} goes through unchanged with $\Z_2$ replacing $\Z$ and the twisted coefficients $\Z^{\sww}$. \\

Indeed, for any generalised cohomology theory $\E$ which is multiplicative with Poincar\'{e} duality  and for which there is a stable $\E$-characteristic class  $\b: \K(N) \to \E(N)$, defining $\b_J(D^\E [f]) := f_!(\b_J(-\Vv^{\rm st}_f))$, if \eqref{diagramusefulforendofproof} commutes, then $\mu =0$ in $\E^*(X)$ $ \Rightarrow$  $\b_J (\mu) =0$ for all $J$, as in the proof of \thmref{pJf}. \\

\begin{comm}
A  more generic proof (avoiding twisted coefficients) uses the construction \eqref{umkehr2} of $f_!$, requiring only that $f$ be vertically oriented; for this \eqref{keystone} is replaced by a diagram
\begin{equation*}
\begin{tikzcd}[row sep=scriptsize, column sep=scriptsize]
E^k(M) \arrow[bend left=30]{rr}{\d} \arrow{dddr}[below]{f_! \ \ }  \arrow{dd}{\phi^M} & & E^{k+1}(W,M)  \arrow{dddl}[below]{\ \ \sigma_!} \arrow{dd}{\phi^W} &  \\ 
 &   &  &  \\ 
 E^{k-q}(M^{\Vv_f^{st}}) \arrow[rr, bend left=30, leftarrow, crossing over]{L^*} \arrow{dr}[below]{P.T. \ \ } & & E^{k-q}(W^{\Vv^{st}_{\sigma}}) \arrow{dl}[below]{ \ \ P.T.}  &  \\ 
 & E^{k-q}(X)  &  & 
\end{tikzcd}
\end{equation*}  
where the unmarked arrow is induced by the map  $M^{\Vv_f^{st}}\to W^{\Vv^{st}_{\sigma}}$ defined via \eqref{VfVs}. The rest of the proof proceeds unchanged when commutativity holds. 
\end{comm}

\section{The converse: $\p^{\mso}_J (\mu) =0$ $\forall J$ $\Rightarrow $  $\mu =0$ in $\MSO^*(X)$ }

\vthree 

For orientable  $X$ the result is inferred from classical results of Conner and Floyd, as follows. In this case $[f]\in \MSO_*(X)$ is represented by $f:M\to X$  with $M$ $\H\Z$-orientable. Thus, one has generating fundamental classes $[M]\in \H_n(M,\Z)$ and $[X]\in \H_n(X,\Z)$. \\

A rational homology class $\b\in \H_*(X,\Q)$ is uniquely determined by the Kronecker pairing map  $$ \langle \cdot\, ,\b \rangle  : \H^*(X,\Q) \to \Q.$$ Define $q^{\mso}_J([f])\in \H_*(X,\Q)$   by setting  -- for arbitrary $\om\in \H^*(X,\Q)$ since $M$ is orientable -- 
$$ \langle \om, q^{\mso}_J( [f]) \rangle  \ = \ \langle f^*\om\cup \p_J(-\Vv^{\rm st}_f), [M] \rangle .$$
Using \eqref{umkehr cup},  the right-hand side is
$$ \langle \om\cup f_!(\p_J(-\Vv^{\rm st}_f)), [X] \rangle  \ =  \ \langle \om\cup \p^{\mso}_J(D^{\mso} [f])), [X] \rangle  \ =  \ \langle \om, D (\p^{\mso}_J(D^{\mso} [f])) \rangle $$
where the second equality uses the property $\langle \om\cup\nu, [X] \rangle  \ = \ \langle \om ,D\nu \rangle $ with $D$ Poincar\'{e} duality on $\H^*(X,\Q)$, and hence:
 
\begin{lem}\label{qf}
$q^{\mso}_J([f])\in \H_*(X,\Q)$ is Poincar\'{e} dual to $\p^{\mso}_J(D^{\mso} [f])\in \H^*(X,\Q)$.  
\end{lem}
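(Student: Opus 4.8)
The plan is to observe that the statement is already essentially proved by the chain of equalities displayed immediately above it; what remains is to isolate the structural facts making that chain valid and to pass from ``pairs equally against all of $\H^*(X,\Q)$'' to an honest identification of homology classes. Concretely, I would first record the standing hypotheses of this section: $X$ is orientable and, because $[f]\in\MSO_*(X)$, the representing manifold $M$ is $\H\Z$-orientable, so the rational Poincar\'e duality isomorphisms $D=D_X$ on $\H^*(X,\Q)$ and $D_M$ on $\H^*(M,\Q)$ are available, and the Umkehr map $f_!$ appearing in \eqref{pJ} is the classical one of \eqref{umkehrPD}, namely $f_! = D_X^{-1}\circ f_*\circ D_M$.

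The one substantive step --- and the place I expect any friction --- is the adjunction identity
\[
\langle f^*\om\cup\eta,[M]\rangle = \langle\om\cup f_!\eta,[X]\rangle,\qquad \om\in\H^*(X,\Q),\ \eta\in\H^*(M,\Q).
\]
I would prove it in two moves. First, for a top-degree class $\gamma\in\H^{\dim M}(M,\Q)$ one has $\langle f_!\gamma,[X]\rangle = \langle\gamma,[M]\rangle$: writing $f_!\gamma = D_X^{-1}f_*(\gamma\cap[M])$ and using that the augmentation $\H_0(\,\cdot\,,\Q)\to\Q$ is natural (hence commutes with $f_*$), both sides equal the augmentation of $\gamma\cap[M]$. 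Second, the projection formula \eqref{umkehr cup} gives $f_!(f^*\om\cup\eta) = \om\cup f_!\eta$ in $\H^*(X,\Q)$. Applying the first move to $\gamma = f^*\om\cup\eta$ and then the projection formula yields the displayed adjunction.

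With this in hand the lemma is immediate. For every $\om\in\H^*(X,\Q)$,
\begin{align*}
\langle\om, q^{\mso}_J([f])\rangle
&= \langle f^*\om\cup\p_J(-\Vv^{\rm st}_f),[M]\rangle\\
&= \langle\om\cup f_!(\p_J(-\Vv^{\rm st}_f)),[X]\rangle\\
&= \langle\om\cup\p^{\mso}_J(D^{\mso}[f]),[X]\rangle\\
&= \langle\om, D(\p^{\mso}_J(D^{\mso}[f]))\rangle,
\end{align*}
where the first equality is the definition of $q^{\mso}_J$, the second is the adjunction above, the third is the definition \eqref{pJ} of $\p^{\mso}_J$, and the last is the defining property $\langle\om\cup\nu,[X]\rangle = \langle\om,D\nu\rangle$ of Poincar\'e duality on $X$. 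Since over $\Q$ the Kronecker pairing identifies $\H_*(X,\Q)$ with $\Hom_\Q(\H^*(X,\Q),\Q)$, a rational homology class is determined by its pairings against all cohomology classes; hence $q^{\mso}_J([f]) = D(\p^{\mso}_J(D^{\mso}[f]))$, i.e.\ $q^{\mso}_J([f])$ is Poincar\'e dual to $\p^{\mso}_J(D^{\mso}[f])$, as claimed. No genuine obstacle is expected beyond keeping the adjunction bookkeeping straight.
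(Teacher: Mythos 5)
Your proof follows the paper's own reasoning exactly: the chain of equalities you derive is precisely the one the paper displays immediately before stating the lemma. The only difference is that you explicitly establish the adjunction identity $\langle f_!\gamma,[M]\rangle$-to-$\langle\gamma,[M]\rangle$ via naturality of the augmentation, a step the paper leaves implicit behind the citation of the projection formula \eqref{umkehr cup}; this is a welcome clarification but not a different route.
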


The vanishing of each of the classes $\p^{\mso}_J(D^{\mso} [f])$ therefore implies the same for the homology classes $q^{\mso}_J([f])$ and hence $ \langle f^*\om\cup \p_J(-\Vv^{\rm st}_f), [M] \rangle  = 0$ for each $\om\in \H^*(X,\Q)$ and $J\<\N^\oo$ (specifically, for  $4|J| + |\om| = \dim M$), in this case.
Taking 
$$\om = c\cup p_{J\pr}(X)$$
we infer that 
$$ \langle f^*c \cup  \p_{J\pr} (f^*\Tx)\cup \p_J(-\Vv^{\rm st}_f), [M] \rangle  = 0$$
for each $c\in \H^*(X,\Q)$ and $J, J\pr\<\N^\oo$. But $\p_I(M) = \sum_{J\sqcup J\pr = I} \p_{J\pr} (f^*\Tx)\cup \p_J(-\Vv^{\rm st}_f)$
and hence 
\begin{equation}\label{CFnumbers}
 \langle f^*c \cup  \p_I(M), [M] \rangle  = 0
\end{equation}
for each $c\in \H^*(X,\Q)$ and $I\<\N^\oo$.  The rational numbers on the left-hand side of \eqref{CFnumbers} are the Conner-Floyd characteristic numbers for bordism classes of maps of oriented manifolds, whose collective vanishing was shown in \cite{Conner-Floyd1} to imply (in fact, to be equivalent to) $[f]=0$:\\

Thus, Theorem  1 \ holds \ when \ $X$ \ is \ $\H\Z$-orientable.\\

Replacing $\p_J$ by $\sw_J$ and $\Q$ by $\Z_2$, the same argument proves the unoriented bordism version of Theorem 1 (the  $\Rightarrow$ direction having been shown here  in \S3):
\begin{equation}\label{eqn swJ=0}
\a = \a\pr\ {\rm in}\ \MO^*(X) \ \ \Longleftrightarrow \ \ \sw^{\mo}_J(\a)=\sw^{\mo}_J(\a\pr) \ \ {\rm in}\ \H^*(X,\Z_2) \ \ \forall \ J \subset \mathbb{N}^\infty.
\end{equation}
$\MO^*(X)$ coincides with the unoriented bordism theory of \cite{Conner-Floyd1} and \eqref{eqn swJ=0} is equivalent to their result. A different proof of $\Leftarrow$ is given here below, and for $\etaaX$ can alternatively be shown using a stable fibrewise Thom map.\\

However, our purpose is to prove Theorem 1 for $\MSO^*(X)\ox\Q$, for which $X$ need not be orientable, and also to prove the corresponding vertical Chern class characterisation of  complex bordism cohomology  $\MU^*(X)$ --- that 
for $\mu\in\MU^*(X)$
\begin{equation}\label{eqn cJ=0}
\c^{\muu}_J (\mu) =0 \  \,\forall J\<\N^\oo \ \Rightarrow  \mu =0 \ {\rm in}  \ \MU^*(X),
\end{equation}
the converse having been shown in \S3.  We proceed via a vertical Riemann-Roch theorem on $\MU^*(X)$. This adapts ideas of \cite{Dyer} and \cite{Buchstaber}. \\

Let $\hh^* = \{\hh^k\}$ and $\kk^* = \{\kk^k\}$ be multiplicative cohomology theories. Let $\tau:\hh^*\to \kk^*$ be a natural transformation such that  for each space $N$, $\tau:\hh^*(N)\to \kk^*(N)$ is a multiplicative (ring) homomorphism, and                                                     
such that if $\a\in\hh^1(S^1, {\rm pt})$ and $\b\in\kk^1(S^1, {\rm pt})$ are suspensions of the units in $\hh^0(S^1, {\rm pt})$ and $\kk^0(S^1, {\rm pt})$, then $\tau(\a) = \b$.  $\tau$ is then said to be a multiplicative transformation \cite{Dyer}. \\

Let  $\xi$ be a stable vector bundle over a manifold $M$, defining a class in the corresponding \K ring $ (M)$ on $M$ of stable bundles with a specified $G$-structure. A multiplicative transformation $\tau:\hh^*\to \kk^*$ may be used to associate to $\xi$ a generalised Todd class 
\begin{equation}\label{gen Todd}
T_\t(\xi) \in \kk^*(M).
\end{equation}
For this, suppose $\xi$ is $\hh$-oriented and $\kk$-oriented, so one has Thom isomorphisms
$$\phi_\hh = \phi_{\hh, \xi} : \hh^r(M) \to \tilde{\hh}^{r + \rk(\xi)}(M^\xi)$$
 and 
$$\phi_\kk = \phi_{\kk, \xi} : \kk^r(M) \to \tilde{\kk}^{r + \rk(\xi)}(M^\xi).$$
Consider the composition
$$\hh^*(M)\stackrel{\phi_\hh}{\to} \tilde{\hh}^*(M^\xi)\stackrel{\t}{\to} \tilde{\kk}^*(M^\xi) \stackrel{\phi_\kk\ii}{\to}\kk^*(M)$$
and assume that for each $n\in\N$
\begin{equation}\label{stable tau}
\t(\phi_{\hh, \ul n}) = \phi_{\kk, \ul n}.
\end{equation}
Then, define \eqref{gen Todd} by 
\begin{equation*}
T_\t(\xi) = \phi_{\kk, \xi}\ii\, \t \,\phi_{\hh, \xi} (1) = \phi_{\kk, \xi}\ii\, \t (u_{\hh, \xi}(\xi))
\end{equation*}
with $u_\hh (\xi) =\phi_{\hh, \xi}(1) \in  \tilde{\hh}^*(M^\xi)$  the Thom class. For $m\in\hh^*(M)$ define 
\begin{equation*}
T_\t(\xi)(m)  = \phi_{\kk, \xi}\ii\, \t \,\phi_{\hh, \xi} (m)  \in \ \kk^*(M).
\end{equation*}

\begin{lem}\label{stable t-Todd}
The class $\xi\mapsto T_\t(\xi)$ is stable, i.e. $T_\t$ pushes-down to a group homomorphism $T_\t: \K(M)\to \kk^*(M)$ (from the appropriate $\K$-theory). 
\end{lem}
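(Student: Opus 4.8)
The plan is to reduce the stability of $T_\t$ to the single fact \eqref{stable tau}, namely that $\t$ carries the Thom class of a trivial bundle to the Thom class of the trivial bundle. Recall that $T_\t(\xi) = \phi_{\kk,\xi}\ii\,\t\,\phi_{\hh,\xi}(1)$, and that by definition of the Thom isomorphism, $\phi_{\hh,\xi}(m) = \mu^*(m)\cup u_{\hh,\xi}$ where $\mu: \xi \to M$ is the projection and $u_{\hh,\xi} = \phi_{\hh,\xi}(1)$ is the Thom class; hence $T_\t(\xi)(m) = \phi_{\kk,\xi}\ii\,\t(\mu^*(m)\cup u_{\hh,\xi}) = \phi_{\kk,\xi}\ii(\mu^*(\t(m))\cup\t(u_{\hh,\xi}))$, using that $\t$ is a multiplicative natural transformation. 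So everything is controlled by how $\t$ acts on Thom classes, and stability means: for $\xi$ both $\hh$- and $\kk$-oriented, the classes $T_\t(\xi)$ and $T_\t(\xi + \ul 1)$ agree under the identification $M^{\xi+\ul 1} \cong \Si M^\xi$, i.e. $T_\t$ only depends on the stable class of $\xi$ in $\K(M)$ and is additive.

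The key steps, in order, are as follows. First I would establish \textbf{multiplicativity of Thom classes under Whitney sum}: for oriented bundles $\xi, \eta$ over $M$ one has $u_{\hh,\xi\op\eta} = u_{\hh,\xi}\cup u_{\hh,\eta}$ under the canonical identification $M^{\xi\op\eta}\cong (M^\xi)\wedge_M(M^\eta)$ (fibrewise smash); this is the standard compatibility of Thom isomorphisms, $\phi_{\hh,\xi\op\eta} = \phi_{\hh,\eta}\circ\phi_{\hh,\xi}$ up to the identification. Second, \textbf{the trivial case}: by \eqref{stable tau}, $\t(u_{\hh,\ul n}) = u_{\kk,\ul n}$ for every $n$ (this uses the hypothesis that $\t(\a)=\b$ on the suspension of the unit, so that $\t$ preserves suspension classes, and $M^{\ul n} = \Si^n M_+$ with Thom class the $n$-fold suspension of $1$). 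Third, \textbf{invariance under adding a trivial summand}: combining the first two steps, $\t(u_{\hh,\xi\op\ul 1}) = \t(u_{\hh,\xi}\cup u_{\hh,\ul 1}) = \t(u_{\hh,\xi})\cup u_{\kk,\ul 1} = \t(u_{\hh,\xi})\cup u_{\kk,\ul 1}$, and applying $\phi_{\kk,\xi\op\ul 1}\ii = (\phi_{\kk,\ul 1}\circ\phi_{\kk,\xi})\ii$ recovers $T_\t(\xi)$ unchanged; thus $T_\t(\xi\op\ul 1) = T_\t(\xi)$ in $\kk^*(M)$. Fourth, \textbf{additivity}: for $\xi,\eta$ both oriented, the same manipulation with $u_{\hh,\xi\op\eta} = u_{\hh,\xi}\cup u_{\hh,\eta}$ and the multiplicativity of $\t$ gives $T_\t(\xi\op\eta) = T_\t(\xi)\cup T_\t(\eta)$. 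Finally, \textbf{descent to $\K(M)$}: an element of $\K(M)$ (with the relevant $G$-structure) is a formal difference $\xi - \eta$, and by adding a complementary bundle we may assume $\eta = \ul l$ is trivial; we define $T_\t(\xi - \ul l) := T_\t(\xi)\cup T_\t(\ul l)\ii = T_\t(\xi)$ (since $T_\t(\ul l) = 1$ by the trivial case), and step three guarantees this is well-defined on stable equivalence classes, while step four makes it a homomorphism from $(\K(M), +)$ to the multiplicative structure — or, written additively after taking logarithms in the usual way for characteristic classes, a group homomorphism.

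The main obstacle I expect is bookkeeping the identifications of Thom spaces and checking that the two Thom isomorphisms $\phi_{\hh}$ and $\phi_{\kk}$ are compatible with the \emph{same} geometric identification $M^{\xi\op\eta}\cong(M^\xi)\wedge_M(M^\eta)$, so that the naturality square for $\t$ genuinely closes up; this is where the hypothesis \eqref{stable tau} must be invoked with care, since it is exactly the statement that $\t$ respects the stabilisation maps $M^\xi \to \Si^{-n}M^{\xi\op\ul n}$ and hence is compatible with passing to the virtual/stable setting $M^{W - W'}$ defined earlier via $W + W'' = \ul l$. Once that compatibility is in hand the rest is formal. The hypothesis that $\t$ is multiplicative and natural does all the real work; the condition $\t(\a)=\b$ on suspensions of units is precisely what forces the trivial-bundle normalisation $\t(u_{\hh,\ul n}) = u_{\kk,\ul n}$, without which stability would fail.
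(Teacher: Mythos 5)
Your proposal is correct and follows essentially the same route as the paper: multiplicativity of Thom classes under Whitney sum, multiplicativity of $\t$, the normalisation $T_\t(\ul n)=1$ from \eqref{stable tau}, and hence $T_\t(\xi+\ul n)=T_\t(\xi)$, giving descent to $\K(M)$. You spell out the bookkeeping of Thom-space identifications in somewhat more detail than the paper, but the argument is identical in substance.
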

\begin{proof} The Thom class has the functoriality property 
\begin{equation}\label{phihh hom}
\phi_\hh (\xi + \eta) = p_1^*\phi_\hh (\xi) \cup p_2^*\phi_\hh (\eta)
\end{equation}
where $p_1, p_2$ are the respective projection maps onto $\xi$ and $\eta$, and similarly for $\phi_\kk$. Hence, since
$\t$ is multiplicative,
\begin{equation}\label{T hom}
T_\t(\xi + \eta) = T_\t(\xi)T_\t(\eta).
\end{equation}
From \eqref{stable tau} \eqref{phihh hom} and \eqref{T hom} and that $T_\t(\ul n) = 1_\hh$, we have $T_\t(\xi+\ul n) = T_\t(\xi)$ and the result follows.
\end{proof}

Thus we have a canonical map $ \MSO_*(X) \to \kk^*(M)$ given by 
$[f] \mapsto T_\t(\Vv^{\rm st}_f),$
or, more concretely, relative to an embedding $e_l: M\to \R^l$ with oriented normal bundle $\Vv_{e_l\x f}$, by  
$[f] \mapsto T_\t(\Vv_{e_l\x f}),$
and similarly for  $MU_*(X)$ with a (stable) complex structure class on $\Vv_{e_l\x f} + \ul m$ for $m$ sufficiently large, the map is $[f] \mapsto T_\t(\Vv_{e_l\x f} + \ul m)$.\\
\begin{lem}\label{T(m)} \cite{Dyer}
One has  for a virtual bundle $\xi$
\begin{equation*}
T_\t(\xi)(m) = T_\t(\xi)\cup \t(m).
\end{equation*}
\end{lem}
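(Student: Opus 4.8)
The plan is to unwind both sides of the identity using the explicit $\hh^*(M)$- and $\kk^*(M)$-module description of the Thom isomorphism, together with the fact that $\t$ is a multiplicative natural transformation. First I would reduce to the case of an honest vector bundle: writing $\xi = W - \ul l$ with $W$ a genuine bundle, one has $M^\xi = \Si^{-l}M^W$, and by construction and by \lemref{stable t-Todd} both $T_\t(\xi)(m)$ and $T_\t(\xi)$ are computed from their $W$-counterparts via the (de)suspension isomorphisms; these intertwine $\phi_{\hh,\cdot}$, $\phi_{\kk,\cdot}$ and $\t$ because of the normalisation \eqref{stable tau} on trivial bundles and the Whitney-sum functoriality \eqref{phihh hom} of Thom classes. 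So it is enough to prove the identity when $\xi$ is a vector bundle, with projection $\pi:\xi\to M$.

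For the bundle case, recall $\phi_{\hh,\xi}(c) = \pi^*(c)\cup u_{\hh,\xi}$ with $u_{\hh,\xi} = \phi_{\hh,\xi}(1)$, and likewise for $\kk$. I would then compute in $\tilde{\kk}^*(M^\xi)$:
\begin{align*}
\t\,\phi_{\hh,\xi}(m) &= \t\bigl(\pi^*(m)\cup u_{\hh,\xi}\bigr) = \pi^*\bigl(\t(m)\bigr)\cup\t(u_{\hh,\xi}),
\end{align*}
using that $\t$ is a ring homomorphism on each space and, being natural, commutes with $\pi^*$. By the very definition of $T_\t$, applying $\phi_{\kk,\xi}$ to $T_\t(\xi) = \phi_{\kk,\xi}\ii\,\t(u_{\hh,\xi})$ gives $\t(u_{\hh,\xi}) = \phi_{\kk,\xi}(T_\t(\xi)) = \pi^*(T_\t(\xi))\cup u_{\kk,\xi}$; substituting and using that $\pi^*$ is multiplicative,
\begin{align*}
\t\,\phi_{\hh,\xi}(m) &= \pi^*\bigl(\t(m)\bigr)\cup\pi^*\bigl(T_\t(\xi)\bigr)\cup u_{\kk,\xi} = \phi_{\kk,\xi}\bigl(\t(m)\cup T_\t(\xi)\bigr).
\end{align*}
Applying $\phi_{\kk,\xi}\ii$ then yields $T_\t(\xi)(m) = \phi_{\kk,\xi}\ii\,\t\,\phi_{\hh,\xi}(m) = \t(m)\cup T_\t(\xi) = T_\t(\xi)\cup\t(m)$.

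The computation in the bundle case is entirely formal once the module formula for the Thom isomorphism is in hand; the only step requiring genuine care is the reduction to honest bundles, namely checking that replacing the virtual $\xi$ by a representative $W$ is compatible with $\t$ --- but this uses nothing beyond \eqref{stable tau} and \eqref{phihh hom} and is the same bookkeeping already carried out in \lemref{stable t-Todd}, so I expect no real obstacle there. I would also double-check the degree conventions and the (harmless, by the graded-commutativity of $\kk^*$) ordering of the cup products $\pi^*(\t(m))$ and $\pi^*(T_\t(\xi))$, but these are routine.
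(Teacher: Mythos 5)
Your proof is correct and follows essentially the same route as the paper's: reduce to an honest bundle via \lemref{stable t-Todd}, write the Thom isomorphism in its module form $\phi(c)=\pi^*(c)\cup u$, and push $\t$ through using multiplicativity and naturality. The only cosmetic difference is that you substitute $\t(u_{\hh,\xi})=\pi^*(T_\t(\xi))\cup u_{\kk,\xi}$ directly and then invert $\phi_{\kk,\xi}$, whereas the paper rewrites $\phi_\kk\ii$ as $\pi_!$ and invokes the projection formula \eqref{umkehr cup} — the same calculation in slightly different clothes.
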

\begin{proof} It is enough, in view of \lemref{stable t-Todd}, to prove this for a vector bundle  $\xi$. 
Let $\pi:\xi\to M$ be the bundle projection map. Then, with $\phi_\kk = \phi_{\kk,\xi}, \phi_\hh = \phi_{\hh,\xi}$,
\begin{eqnarray*}
\phi_\kk\ii   \t    \phi_\hh(m) &=& \phi_\kk\ii  \t (\phi_\hh(1) \cup \pi^* m) \\
					&=& \phi_\kk\ii  ( \t (\phi_\hh(1)) \cup \t(\pi^* m)) \\
					&=& \phi_\kk\ii  ( \t (\phi_\hh(1)) \cup \pi^*\t(m)) \\
					&=& \pi_! ( \t (\phi_\hh(1)) \cup \pi^*\t(m)) \\
					&=& \pi_! \l( \t (\phi_\hh(1))\r) \cup \t(m) \\
					&=& \phi_\kk\ii \t (\phi_\hh(1)) \cup \t(m)
\end{eqnarray*}
\end{proof}

The following is a tweak of Dyer's Riemann-Roch theorem for oriented maps in generalised cohomologies \cite{Dyer} to the case of vertically oriented maps: 

\begin{theorem}\label{Thm RR}
Let $f:M\to X$ be a continuous map of manifolds which is both $\hh$ and $\kk$ vertically oriented, so that the Umkehr maps $f^\hh_!$ and $f^\kk_!$ are defined and the vertical Todd class  $T_\t(\Vv^{\rm st}_f)$ is defined. Then for $m\in\hh^*(M)$ one has in $\kk^*(M)$
\begin{equation}\label{vertical RR}
\t(f^\hh_!(m)) = f^\kk_! (T_\t(\Vv^{\rm st}_f)\cup \t (m)).
\end{equation}
\end{theorem}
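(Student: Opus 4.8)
The plan is to follow Dyer's argument, adapting it to the factorisation of the vertically-oriented Umkehr map. Recall from \eqref{umkehr2} that for any multiplicative theory $\E$ on which $f$ is vertically $\E$-oriented one has
\[
f^\E_! \;=\; (\mathrm{P.T.})^* \circ \phi_{\E,\,\Vv^{\rm st}_f}\;:\; \E^r(M)\;\too\; \tilde{\E}^{\,r-q}\!\big(M^{\Vv^{\rm st}_f}\big)\;\too\;\E^{\,r-q}(X),
\]
where $\phi_{\E,\,\Vv^{\rm st}_f}$ is the stable Thom isomorphism of the chosen vertical $\E$-orientation and $(\mathrm{P.T.})^*$ denotes pullback along the stable Pontryagin--Thom map $X_+\to M^{\Vv^{\rm st}_f}$ of \eqref{stablePT}. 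I would chase $\t$ through this composition for $\E=\hh$: since $\t$ is a natural transformation it commutes strictly with $(\mathrm{P.T.})^*$, so the only discrepancy arises at the Thom isomorphism, and that discrepancy is, by construction, exactly a cup product with the vertical Todd class $T_\t(\Vv^{\rm st}_f)$.

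The Thom step is essentially a restatement of the definition of the generalised Todd class. By definition $T_\t(\xi)(m)=\phi_{\kk,\xi}^{-1}\,\t\,\phi_{\hh,\xi}(m)$, i.e. $\t\big(\phi_{\hh,\xi}(m)\big)=\phi_{\kk,\xi}\big(T_\t(\xi)(m)\big)$, and $\lemref{T(m)}$ identifies $T_\t(\xi)(m)=T_\t(\xi)\cup\t(m)$. Taking $\xi=\Vv^{\rm st}_f$ yields
\[
\t\big(\phi_{\hh,\,\Vv^{\rm st}_f}(m)\big)\;=\;\phi_{\kk,\,\Vv^{\rm st}_f}\big(T_\t(\Vv^{\rm st}_f)\cup\t(m)\big)\qquad\text{in }\tilde{\kk}^{\,*}\!\big(M^{\Vv^{\rm st}_f}\big).
\]
Here one must check that the stable Thom isomorphisms are the intended ones: $T_\t$ descends to the stable $\K$-group by $\lemref{stable t-Todd}$, using the normalisation \eqref{stable tau} that $\t(\phi_{\hh,\ul n})=\phi_{\kk,\ul n}$, so this identity (and hence each side of the theorem) is independent of the embedding $e_l:M\to\R^l$ used to represent $\Vv^{\rm st}_f$ by an honest bundle $\Vv_{e_l\x f}$.

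Composing with the Pontryagin--Thom step then gives
\[
\t\big(f^\hh_!(m)\big)\;=\;(\mathrm{P.T.})^*\,\phi_{\kk,\,\Vv^{\rm st}_f}\big(T_\t(\Vv^{\rm st}_f)\cup\t(m)\big)\;=\;f^\kk_!\big(T_\t(\Vv^{\rm st}_f)\cup\t(m)\big),
\]
which is \eqref{vertical RR}. If instead one prefers the finite-dimensional model \eqref{umkehr1}, there is one further link in the chain: $\t$ must also commute with the $l$-fold suspension isomorphism $\tilde{\E}^{\,*}(\Si^l X_+)\cong\E^{\,*}(X)$. This is precisely where the hypothesis that $\t$ be a \emph{multiplicative} transformation enters --- the condition $\t(\a)=\b$ on suspended units, together with multiplicativity of $\t$, allows one to slide the suspension class through the cup product --- and it dovetails with the stability of $T_\t$ so that the outcome is independent of $l$.

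Overall this is a diagram chase once the factorisation of $f_!$ is in hand, and no input is needed beyond $\lemref{T(m)}$, $\lemref{stable t-Todd}$, naturality of $\t$, and the multiplicative-transformation axioms. I expect the only mild obstacle to be bookkeeping: matching the Thom-class and orientation conventions in the definition of $T_\t$ with those built into the Umkehr maps $f^\hh_!$ and $f^\kk_!$, and keeping the stable versus unstable Thom isomorphisms straight so that well-definedness of both sides of \eqref{vertical RR} is manifest.
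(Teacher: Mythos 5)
Your argument is essentially identical to the paper's: factor $f^\E_!$ as the Pontryagin--Thom pullback composed with the Thom isomorphism, slide $\tau$ past the pullback by naturality, and absorb the discrepancy at the Thom step via the definition $T_\tau(\xi)(m)=\phi_{\kk,\xi}^{-1}\tau\phi_{\hh,\xi}(m)$ together with Lemma~\ref{T(m)} and the stability Lemma~\ref{stable t-Todd}. Your extra remark about the suspension in the unstable model \eqref{umkehr1} is a correct observation but not needed once one works with \eqref{umkehr2}, as the paper does.
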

\begin{proof} Since $\Vv^{\rm st}_f$ is $\hh$-oriented and $\kk$-oriented we have Thom isomorphisms
$$\phi_{\hh, \Vv^{\rm st}_f}  : \hh^r(M) \to \tilde{\hh}^{r -q}(M^{\Vv^{\rm st}_f}),\ \ \aand \ \ \phi_{\kk, \Vv^{\rm st}_f}   : \kk^r(M) \to \tilde{\kk}^{r -q}(M^{\Vv^{\rm st}_f}).$$
Combining with the Pontryagin-Thom map $\tilde f : X_+\to M^{\Vv^{\rm st}_f}$, inducing
$$ \tilde f^*_\hh : \tilde{\hh}^*(M^{\Vv^{\rm st}_f}) \to \hh^*(X_+) \ \ \aand \ \ \tilde f^*_\kk : \tilde{\kk}^*(M^{\Vv^{\rm st}_f}) \to \kk^*(X_+),$$
gives the Umkehr/Gysin maps 
 \begin{equation}\label{umkehr RR}
 f^\hh_! = \tilde f^*_\hh \circ \phi_\hh  : \hh^r(M) \to \hh^{r-q}(X) \ \ \ \aand \ \ \ f^\kk_! = \tilde f^*_\kk \circ \phi_\kk  : \kk^r(M) \to \kk^{r-q}(X),
\end{equation}
and we have 
\begin{eqnarray*}
\t(f^\hh_!(m))  & =  & \t \tilde f^*_\hh  \phi_\hh(m) \\
			& =  & \tilde f^*_\kk \t    \phi_\hh(m) \\
			& =  & \tilde f^*_\kk \phi_\kk \l( \phi_\kk\ii   \t    \phi_\hh(m)\r)\\
			& =  & f^\kk_! \l( \phi_\kk\ii   \t    \phi_\hh(m)\r)\\
			& =  & f^\kk_! \l( T_\t(\Vv^{\rm st}_f)(m)\r)
\end{eqnarray*}
and so using \lemref{T(m)} and stability of $T_\t$ we reach \eqref{vertical RR}.
\end{proof}

In the case, making a stronger assumption, that $M$ and $X$ are $\hh$, $\kk$ orientable, one also has the (then) equivalent formulation of
\eqref{vertical RR} of \cite{Dyer}. Namely, $M$ and $X$ orientability gives Thom isomorphisms $\phi_{\hh, -\Tm}  : \hh^r(M) \to \tilde{\hh}^{r -q}(M^{-\Tm})$ and $\phi_{\kk, -\Tm}$, and likewise for the stable normal bundle $-\Tx$, yielding the construction \eqref{umkehr3}  of the Umkehr maps $f^\hh_!$ and $f^\kk_!$ and generalised Todd classes of the stable normal bundles. Dyer's Riemann-Roch formula is then 
\begin{equation}\label{horizontal RR}
\t(f^\hh_!(m))\cup T_\t(-\Tx) = f^\kk_! (T_\t(-\Tm)(m))
\end{equation}
which in view of  \eqref{umkehr cup} and \eqref{T hom} is the same as \eqref{vertical RR}. On the other hand, \eqref{vertical RR} holds for vertically oriented $f$ when $X$ is not orientable and \eqref{horizontal RR} is not applicable. \\

The total Novikov operation  \cite{Adams, Quillen}
\begin{equation}\label{Novikov S}
\Sbb = \sum_J \Sbb_J: \MU^*(X)\to \MU^*(X)
\end{equation}
summed over multi-indices $J$, where $\Sbb_J: \MU^k(X)\to \MU^{k+ 2|J|}(X)$, has the properties that $\Sbb_0$ is the identity, that $f^* \Sbb_J = \Sbb_J f^*$, and $\Sbb$ is multiplicative, and is of some tangential interest insofar as for a complex vector bundle $\xi\to M$ 
$$T_s(\xi) = \sum_J \cf^{\muu}_J(\xi)$$
with $\cf^{\muu}_J(\xi) = \cf^{\muu}_{j_1}(\xi)\cdots \cf^{\muu}_{j_r}(\xi)$, where $ \cf^{\muu}_j(\xi)$ is the $j^{{\rm th}}$ Conner-Floyd Chern class of $\xi$ \cite{Adams, Kochman}. 
So \eqref{vertical RR} says: 
\begin{prop}
For a vertical stably complex map $f:M\to X$ one has in $\MU^*(X)$
$$\Sbb(f^{\muu}_!(\cf^{\,\,\muu}_J(\xi))) = f^{\muu}_! (\cf^{\,\,\muu}_J(\Vv^{\rm st}_f)\cup \Sbb(\cf^{\,\,\muu}_J(\xi))).$$
\end{prop}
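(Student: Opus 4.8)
The plan is to recognise the asserted identity as the specialisation of the vertical Riemann--Roch theorem \thmref{Thm RR} to the case $\hh^*=\kk^*=\MU^*$, with $\t=\Sbb$ the total Novikov operation \eqref{Novikov S}. Accordingly the first task is to check that $\Sbb$ qualifies as a multiplicative transformation in the sense used before \thmref{Thm RR}. Naturality, $f^*\Sbb_J=\Sbb_J f^*$, and the fact that each $\Sbb\colon\MU^*(N)\to\MU^*(N)$ is a ring homomorphism are among the listed properties of $\Sbb$. The remaining normalisation --- that $\Sbb$ fixes the suspension of the unit, i.e. the distinguished class in $\MU^1(S^1,{\rm pt})$ of the definition preceding \thmref{Thm RR} --- follows because $\Sbb$ is a stable ring operation with $\Sbb_0=\id$: one has $\Sbb(1)=1$ in $\MU^0({\rm pt})$, the higher terms $\Sbb_J(1)$ lying in $\MU^{2|J|}({\rm pt})=0$, and $\Sbb$ commutes with the suspension isomorphism. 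The same remark gives the stability hypothesis \eqref{stable tau}: the Thom class $\phi_{\MU,\ul n}(1)$ of a trivial bundle is an $n$-fold suspension of $1\in\MU^0({\rm pt})$, hence is fixed by $\Sbb$. Thus the generalised Todd class $T_\Sbb$ of \eqref{gen Todd} is defined and, by \lemref{stable t-Todd}, descends to a homomorphism on stable bundles.

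Next I would identify this Todd class with the total Conner--Floyd Chern class: for a complex vector bundle $\xi\to M$ one has $T_\Sbb(\xi)=\sum_J\cf^{\muu}_J(\xi)$, the computation recorded just above the Proposition (following \cite{Adams, Kochman}). Since $f$ is vertically stably complex, its stable normal bundle $\Vv^{\rm st}_f$ carries a stable complex structure; applying stability of $T_\Sbb$ (\lemref{stable t-Todd}) to a complex virtual bundle stably equivalent to $\Vv^{\rm st}_f$ then gives $T_\Sbb(\Vv^{\rm st}_f)=\sum_J\cf^{\muu}_J(\Vv^{\rm st}_f)$, with $\cf^{\muu}_J(\Vv^{\rm st}_f)\in\MU^*(M)$ the Conner--Floyd Chern classes of that virtual bundle.

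Finally, feeding $m=\cf^{\muu}_J(\xi)\in\MU^*(M)$ into \thmref{Thm RR} yields, in $\MU^*(X)$,
\[
\Sbb\bigl(f^{\muu}_!(\cf^{\muu}_J(\xi))\bigr)=f^{\muu}_!\bigl(T_\Sbb(\Vv^{\rm st}_f)\cup\Sbb(\cf^{\muu}_J(\xi))\bigr),
\]
and substituting the expression for $T_\Sbb(\Vv^{\rm st}_f)$ from the previous step gives the displayed formula. The only non-routine point is the verification in the first step that $\Sbb$ meets the axioms of a multiplicative transformation --- in particular \eqref{stable tau} --- so that \thmref{Thm RR} and \lemref{stable t-Todd} apply; everything after that is a direct reading of those two results, and I expect that bookkeeping to be the main (mild) obstacle. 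I would also record in passing that over the compact manifold $X$ both $\Sbb=\sum_J\Sbb_J$ and $T_\Sbb=\sum_J\cf^{\muu}_J$ reduce to finite sums in each cohomological degree, so no convergence question intervenes.
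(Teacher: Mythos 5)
Your proposal matches the paper's argument exactly: the Proposition is presented there as nothing more than the specialisation of the vertical Riemann--Roch theorem (\thmref{Thm RR}) to $\hh=\kk=\MU^*$, $\tau=\Sbb$, using the identity $T_\Sbb(\xi)=\sum_J\cf^{\muu}_J(\xi)$ recalled in the paragraph immediately preceding the statement. Your explicit check that $\Sbb$ satisfies the multiplicative-transformation axioms --- in particular \eqref{stable tau}, which you get from stability of $\Sbb$ together with $\MU^{2|J|}({\rm pt})=0$ for $|J|>0$ --- fills in a hypothesis the paper leaves implicit but does not change the route.
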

\vthree

Here, any complex orientable cohomology theory has Conner-Floyd Chern classes associated to each complex bundle and these behave naturally with respect to Whitney sum and pull-back. The classes $\cf^{\,\hh}_k\in \hh^{2k}(\BU)$ of the universal bundle provide a canonical set of  generators
for the polynomial ring $\hh^*(\BU) = \hh^*[\cf^{\,\hh}_1, \cf^{\,\hh}_2, \ldots]$  \cite{Conner-Floyd2} , \cite{Adams}, \cite{Kochman}. This mirrors the classical Chern classes -- to which they reduce when $\hh= \H\Z^*$ -- however, the ring structure for general $\hh$ is  more subtle, depending on a formal group law.\\

The Novikov operation \eqref{Novikov S} is a homolog in $\MU^*$ of the Steenrod operation on $\H\Z_2$ 
\begin{equation*}
Sq = \sum_{k=0} Sq_k : \H^*(N, \Z_2) \to  \H^*(N, \Z_2),
\end{equation*}
where $Sq_k : \H^m(N, \Z_2) \to  \H^{m+ k}(N, \Z_2)$ is the $k^{{\rm th}}$ Steenrod square, for which one has the classical Stiefel-Whitney class formula
$$T_{Sq}(\zeta) = \sum_k \sw_k(\zeta) =: \sw(\zeta),$$
and \eqref{vertical RR} says  for a real bundle $\zeta\to M$
\begin{equation*}
Sq(f^{\Hsss}_!(\sw(\zeta))) = f^{\Hsss}_!(\sw(\Vv^{\rm st}_f)\cup Sq(\sw(\zeta))) \hfive \iin \ \H^*(X,\Z_2),
\end{equation*}
similarly to \cite{AtHi} and \cite{Dyer}.\\

The  multiplicative transformation we have specific need of here is the rational Chern-Dold character which defines a canonical map from a multiplicative cohomology $\hh^*$ 
\begin{equation*}
\ch^\hh : \hh^*(N)\ox\Q \too \H^*(N, \L^*_\hh \ox\Q) := \sum_{l\geq 0} \H^l(N, \L^{l-1}_\hh \ox\Q)
\end{equation*}
to singular cohomology with coefficients in the rational cobordism ring  $\L^*_\hh \ox\Q$. On finite complexes $\ch^\hh$ is a ring isomorphism. For, \cite{Dyer} Cor.4 proves that the rationalisation of any multiplicative (co)homology theory is singular cohomology, that there is an isomorphism 
$ \hh^*(N)\ox\Q \cong \H^*(N, \L_\hh^* \ox\Q)$
and by \cite{Dyer} Th.2 there is a unique such isomorphism which on $N = {\rm pt}$ coincides with the identity map 
$\L_\hh^* \ox\Q \to \L_\hh^* \ox\Q$ -- this is $\ch^\hh$. \\ 

We now apply \thmref{Thm RR} to the Chern-Dold character on vertical bordism cohomology. \\

First, note how it applies to the classical case 
$\hh = \K,  \kk = \H^* $
 for which the Chern-Dold character reduces to the usual Chern character 
$\ch^{\uu} : \K(N)\ox \Q \too \H^*(N, \Q),$
and for $\xi$ a stable complex bundle 
\begin{equation}\label{classical Todd xi}
T(-\xi) = \phi_{\Hss,-\xi}\ii  \ch^{\uu}   \phi_{\Kss,-\xi}(1) = \Td(\xi)
\end{equation}
with $\Td$ the classical Todd class, which augments for $m=E\in \K(M)$ to 
\begin{equation*} 
\phi_{\Hss,-\xi}\ii   \ch^{\uu}   \phi_{\Kss,-\xi}(E) = \Td(\xi)\,\ch^{\uu}(E),
\end{equation*}
(see for example \S12 of \cite{LaMi}).
% or, equivalently, that $\ch^{\uu}( 1_\K ) =  1_{\Hss}$, with $1_\hh$ the identity element of the cohomology ring. 
 By \eqref{vertical RR}, the Riemann-Roch theorem therefore extends to vertically oriented $f:M\to X$ as
\begin{equation}\label{classical vertical RR}
 \ch^{\uu} (f^{\Kss}_!(E)) = f^{\Hss}_! ( \Td(-\Vv^{\rm st}_f)\,\ch^{\uu}(E)).
\end{equation}
 $\K$-orientability means $\Vv^{\rm st}_f$ is  spin-c and in this case
$\Td(\Vv^{\rm st}_f) = e^{c_1(\Vv^{\rm st}_f) / 2} \Ahat(\Vv^{\rm st}_f).$ 
If  $M$ and $X$ are assumed to be $\K$-orientable almost complex manifolds  (and $\H$-orientable),  then $\Td(-\Vv^{\rm st}_f)) = \Td(M) f^*(\Td(X))\ii $
and matters reduce to a result of \cite{AtHi} (\S3 eq.$(i)\pr$) and \cite{Dyer},
and \eqref{classical vertical RR} takes its customary form  
\begin{equation*}
 \ch^{\uu} (f^{\Kss}_!(E)) \Td(X) = f^{\Hss}_! ( \ch^{\uu}(E)\Td(M)).
\end{equation*}
Similarly, with $\hh = \KO,  \kk = \H^* $ and $\t = \ch^{\so}$ the Pontryagin character 
one has
\begin{equation*}
 \ch^{\so} (f^{\KOss}_!(E)) = f^{\Hss}_! ( \Ahat(-\Vv^{\rm st}_f))\, \ch^{\so}(E))
\end{equation*}
\vskip 1mm
reverting when  $M$ and $X$  are spin manifolds ($\KO$-orientable) to 
$$ \ch^{\so} (f^{\KOss}_!(E)) \Ahat(X) = f^{\Hss}_! (\Ahat(M) \, \ch^{\so}(E)).$$

\vtwo
Now contemplate the case
$$\hh = \MU^*\ox\Q, \hfive \kk = \H^*( \  \ , \L^*_{\muu} \ox\Q) , $$
and the Chern-Dold character 
$$\t:=\ch^{\muu} : \MU^*(N)\ox\Q \too \H^*(N, \L^*_{\muu} \ox\Q).$$
A rank $n$ complex vector bundle $\xi$ is  both $\MU$ and $\H$ oriented. Let 
\begin{equation}\label{MU Thom class}
u_\xi = u_{\muu, \xi}\in\MU^{2n}(M)
\end{equation}
 be the Thom class  corresponding to the homotopy class of maps $M^\xi\to \MU_n$ 
defined by the classifying class $M\to \BU_n$ of $\xi$. Let 
$\phi_\xi  = \phi_{\muu, \xi}  : \MU^r(M) \to \tilde{\MU}^{r+2n}(M^\xi)$ be the resulting Thom isomorphism,  $\phi_{\muu, \xi} (1)= u_\xi$.
Let $\phi_{\Hss, \xi}(1)$ be the Thom class in $\H^*(M, \L^*_{\muu} \ox\Q)$ determined by the Thom class $\mu_\Z(\phi_{\muu, \xi}(1)) \in \H^*(M, \Z)$, where  $\mu_\Z : \MU^*(M)\to \H^*(M, \Z)$ is the Thom/Steenrod homomorphism in cohomology, and let $\phi_{\Hss, \xi}$ be the resulting Thom isomorphism. Then, mirroring  \eqref{classical Todd xi}, there is the Buchstaber-Todd class \cite{Buchstaber} 
\begin{equation*}
\Td^{\muu} (\xi): = T(-\xi) = \phi_{\Hss, -\xi} \,  \ch^{\muu}\,   \phi_{\muu, -\xi} (1) \ \in \H^*( M , \L^*_{\muu} \ox\Q) . 
\end{equation*}
Applying \eqref{vertical RR} we infer that
\begin{equation*}
\ch^{\muu} (\underbrace{f^{\muu}_!(1)}_{\in\, \MU^{-q}(X)})  = f^{\Hss}_! (\Td^{\muu} (\Vv^{\rm st}_f)) = f^{\Hss}_! (\phi_{\Hss, \Vv^{\rm st}_f} \,  \ch^{\muu}\,   \phi_{\muu, \Vv^{\rm st}_f} (1)).
\end{equation*}
To evaluate these terms, we have  from \eqref{umkehr RR} that for $[f:M\to X] \in \MU_{\dim M}(X)$ 
$$f^{\muu}_!(1) =  \tilde f^*_{\muu} \circ \phi_{\muu, \Vv^{\rm st}_f}(1),\hten 1\in\MU^0(M).$$
By the definition of  \eqref{MU Thom class}, the Thom class $\phi_{\muu, \Vv^{\rm st}_f}(1) = u_{\Vv^{\rm st}_f}$ associated to $[f]$ is the element of $\MU$ cohomology specified by the classifying map $M^{\Vv^{\rm st}_f}  \too \MU_{-q}$,
while $\tilde f^*_{\muu}$ is induced by the Pontryagin-Thom map $X_+\to M^{\Vv^{\rm st}_f} $. Their composition is thus precisely the vertical Atiyah-Poincar\'{e} dual element $D^{\muu}[f] \in \MU^{-q}(X)$  computed in Sect.1. That is, 
\begin{prop}\label{fMU1}
\begin{equation*}
f^{\muu}_!(1) = D^{\muu}[f].
\end{equation*}
\end{prop}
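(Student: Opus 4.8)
The plan is to unwind both sides against the explicit descriptions established in Sections~1 and~4 and observe that they are literally the same stable map. First I would recall from \eqref{umkehr RR} that, applied to the constant class $1\in\MU^0(M)$, the Umkehr map factors as
\begin{equation*}
f^{\muu}_!(1) = \tilde f^*_{\muu} \circ \phi_{\muu, \Vv^{\rm st}_f}(1),
\end{equation*}
where $\phi_{\muu, \Vv^{\rm st}_f}$ is the Thom isomorphism for the stable normal bundle and $\tilde f : X_+ \to M^{\Vv^{\rm st}_f}$ is the stable Pontryagin-Thom map of \eqref{stablePT}. Next I would identify the two ingredients separately: by the definition \eqref{MU Thom class} of the $\MU$-Thom class, $\phi_{\muu, \Vv^{\rm st}_f}(1) = u_{\Vv^{\rm st}_f}$ is precisely the class represented by the classifying map $M^{\Vv^{\rm st}_f} \to \MU_{-q}$ of $\Vv^{\rm st}_f$ (i.e.\ the map to the Thom spectrum induced by $M\to\BU$ covering $\Vv^{\rm st}_f \to \xi$), while $\tilde f^*_{\muu}$ is induced by $X_+ \to M^{\Vv^{\rm st}_f}$.

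Then I would observe that the composite
\begin{equation*}
X_+ \xrightarrow{\ \tilde f\ } M^{\Vv^{\rm st}_f} \longrightarrow \MU_{-q}
\end{equation*}
is, by the very construction of the Atiyah-Poincar\'{e} duality map in Section~1 (the complex-bordism analogue of \eqref{stablePT}, recorded in the paragraph defining $\MU^n(X)$ and the duality isomorphism $\MU_q(X)\to\MU^{\dim X - q}(X)$), exactly the element $D^{\muu}[f]\in\MU^{-q}(X)$. Concretely, both $f^{\muu}_!(1)$ and $D^{\muu}[f]$ are, after desuspension, the same homotopy class in $[X_+,\MU_{-q}]$: one chases an embedding $e_l\colon M\to\R^l$, uses that $\Vv_{e_l\x f} - \ul l$ represents $\Vv^{\rm st}_f$, takes Thom spaces of \eqref{v(exf)} to get $\Sigma^l X_+ \to M^{\Vv_{e_l\x f}} \to \MU(k)$, and passes to the stable colimit. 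The Thom class $u_{\Vv^{\rm st}_f}$ precomposed with the Pontryagin-Thom collapse is by definition that colimit, so no further argument is needed.

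The only point requiring a little care --- and the step I expect to be the mild obstacle --- is matching conventions: checking that the Thom class $u_{\Vv^{\rm st}_f}$ of \eqref{MU Thom class}, defined via the classifying map $M\to\BU_n$ and the canonical map of Thom pre-spectra $M^{\Vv_{e_l\x f}} \to \MU(l-q)$, is indeed the \emph{same} stable class (not merely a cohomologous one) that enters the duality construction of Section~1, including the handling of the rank shift by $\ul l$ and the desuspension. Since both constructions use the identical map $\Vv^{\rm st}_f \to \xi$ to the universal stable bundle in $\K(\BU)$ of rank $0$, this is a matter of tracing definitions rather than proving anything new; stability of the Thom class (as in \lemref{stable t-Todd}) guarantees the answer is independent of the auxiliary $l$. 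Assembling these identifications gives $f^{\muu}_!(1) = D^{\muu}[f]$ in $\MU^{-q}(X)$, which is the assertion of the proposition.
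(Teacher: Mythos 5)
Your proposal is correct and follows the paper's own argument essentially verbatim: both decompose $f^{\muu}_!(1)$ via \eqref{umkehr RR} as $\tilde f^*_{\muu}\circ\phi_{\muu,\Vv^{\rm st}_f}(1)$, identify the Thom class with the classifying map $M^{\Vv^{\rm st}_f}\to\MU_{-q}$ from \eqref{MU Thom class}, and recognize the composite with the Pontryagin-Thom collapse as the duality element of Section~1. The one addition you make --- flagging the convention-matching of the Thom class through the rank shift and desuspension and invoking stability --- is a sensible caution but not a different route.
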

\vtwo 
If one additionally assumes that $M$ and $X$ are individually stable complex manifolds then the Poincar\'{e} duality construction \eqref{umkehrPD} of $f^{\muu}_!$  can be used to rederive
$$f^{\muu}_!(1) = D^{\muu} \circ f^{\muu}_* \circ (D^{\muu})\ii (1_{\muu^*}) = D^{\muu} \circ f^{\muu}_*([1]) = D^{\muu}([f]),$$
where $[1]$ is the bordism class of the identity map $1:M\to M$. \\

By \thmref{vertical RR} we so far have:
\begin{prop}
\begin{equation}\label{MU vert RR 2}
\ch^{\muu} (D^{\muu}[f])  = f^{\Hss}_! (\Td^{\,\muu} (-\Vv^{\rm st}_f)).
\end{equation}
\end{prop}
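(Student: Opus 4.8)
The plan is to assemble \eqref{MU vert RR 2} from two facts already established: the vertical Riemann--Roch identity \thmref{Thm RR}, specialised to the Chern--Dold character with trivial input class $m=1$, together with the identification $f^\muu_!(1) = D^\muu[f]$ of \propref{fMU1}. In other words, all the substantive work is already done and this proposition is essentially the bookkeeping that packages it.

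First I would apply \eqref{vertical RR} with $\hh = \MU^*\ox\Q$, $\kk = \H^*(\,\cdot\,,\L^*_\muu\ox\Q)$, $\t = \ch^\muu$ and $m = 1 \in \MU^0(M)\ox\Q$. The hypotheses of \thmref{Thm RR} hold in this setting: $f$ represents a class in $\MU_*(X)$, so $\Vv^{\rm st}_f$ carries a stable complex structure and is therefore $\MU$-oriented, and the integral Thom class $\mu_\Z(u_{\muu,\Vv^{\rm st}_f})$ supplies the compatible orientation in $\H^*(\,\cdot\,,\L^*_\muu\ox\Q)$; moreover $\ch^\muu$ is a multiplicative transformation, so \eqref{stable tau} is satisfied and $T_{\ch^\muu}(\Vv^{\rm st}_f)$ is defined, and it equals $\Td^\muu(-\Vv^{\rm st}_f)$ by the very definition of the Buchstaber--Todd class combined with the stability of $T_\t$ (\lemref{stable t-Todd}). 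Since $\ch^\muu$ is a ring homomorphism we have $\ch^\muu(1)=1$, so the right-hand side of \eqref{vertical RR} collapses to $f^{\Hss}_!\big(T_{\ch^\muu}(\Vv^{\rm st}_f)\big)=f^{\Hss}_!\big(\Td^\muu(-\Vv^{\rm st}_f)\big)$, while the left-hand side is $\ch^\muu(f^\muu_!(1))$.

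Finally I would substitute \propref{fMU1}, namely $f^\muu_!(1)=D^\muu[f]\in\MU^{-q}(X)$, into the left-hand side to rewrite it as $\ch^\muu(D^\muu[f])$; this yields \eqref{MU vert RR 2}. There is no genuine obstacle here. The one point that warrants care is the coherence of the orientation data: one must use the same Thom class on $\Vv^{\rm st}_f$ — the one pulled back from the universal class $M\to\BU$ — both when forming $f^\muu_!$ in \thmref{Thm RR} and when invoking \propref{fMU1}, and the $\kk$-orientation must be the one induced from it via the Thom/Steenrod homomorphism $\mu_\Z$, exactly as fixed in the discussion preceding \propref{fMU1}. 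With that single consistent choice the argument closes.
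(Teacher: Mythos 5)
Your proposal is correct and is essentially identical to the paper's own argument: both specialise Theorem \ref{Thm RR} to $\t=\ch^{\muu}$ with $m=1$, identify $T_{\ch^{\muu}}(\Vv^{\rm st}_f)$ with $\Td^{\,\muu}(-\Vv^{\rm st}_f)$, and then substitute Proposition \ref{fMU1} to replace $f^{\muu}_!(1)$ by $D^{\muu}[f]$. Your remark on keeping the $\MU$- and $\H$-orientations coherent via $\mu_\Z$ is exactly the point the paper fixes in the paragraph preceding Proposition \ref{fMU1}.
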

\vfour

This extends Th. 1.5 of \cite{Buchstaber} to vertically oriented $f:M\to X$.\\

By stability (\lemref{stable t-Todd}) 
$\Td^{\mso} (\Vv^{\rm st}_f)  = \Td^{\muu} (\Vv_{e_l\x f} + \ul m)$
for  the normal bundle $ \Vv_{e_l\x f}$ with $l>>0$ and $m$ such that $\Vv: =\Vv_{e_l\x f} + \ul m$ has a complex structure defining the stable complex class of $\Vv^{\rm st}_f$.  Buh\v staber (loc. cit.) proves that for any complex vector bundle $\xi$
\begin{align*}
\Td^{\muu}(\xi)= \exp \left\lbrace\sum_{i\geq 1} \frac{[N^{2i}]}{d_i} \ch^{\uu}_i(\xi)\right\rbrace
\end{align*}
for unique up to bordism stably complex manifolds $N^{2i}$, and certain specific integers $d_i$. From this we infer an expansion 
\begin{equation}\label{Todd MU cf}
\Td^{\,\muu} (-\Vv^{\rm st}_f) =   \sum_J b_J\,[M_J]\,\c_J(-\Vv^{\rm st}_f)\ \in \H^*( M , \L^*_{\muu} \ox\Q)
\end{equation} 
for  some closed manifolds $M_J$ and $b_J\in\Q$ --  \eqref{Todd MU cf} can alternatively be deduced from the identity 
\begin{equation}\label{Chern-Dold MU}
\ch^{\muu}(\cf^{\muu}_1(\xi)) = \ch_1^{\uu}(\xi) + \sum_{n\geq 1} [M^{2n}] \, \ch_{n+1}^{\uu}(\xi)
\end{equation}
of  \cite{Buchstaber} (Cor. 2.4).  From \eqref{MU vert RR 2} 
\begin{equation}\label{MU vert RR 3}
\ch^{\muu} (D^{\muu}[f])  =  \sum_J b_J\,[M_J]\,f^{\Hss}_! \l(\c_J(-\Vv^{\rm st}_f)\r)  =   \sum_J b_J\,[M_J]\,\c^{\muu}_J(D^{\muu} [f]).
\end{equation}
Hence 
$$\c^{\muu}_J(D^{\muu} [f]) =0  \ \ \forall \ J \<\N^\oo \  \Too  \  \ch^{\muu} (D^{\muu}[f]) =0$$
and hence
$$\mu := D^{\muu} [f]=0 \ \iin  \ \MU^*(X)\ox\Q$$
since  $\ch^{\muu}$ is a $\Q$  ring isomorphism, or equivalently $[f] =0$ in  $\MU_*(X)\ox\Q.$ Since any cohomology class in $\MU^*(X)$ can be so written, this completes the proof of \eqref{eqn cJ=0}.

\vthree 

Let us sketch the proof for $\MO^*(X)$ and $\MSO^*(X)\ox\Q$.  Note, first, that $\MO$ is a real oriented cohomology theory, every real vector bundle $\xi \to X$ is $\MO$-oriented with Thom class $u_\xi \in \tilde{\MO}^n(X^\xi)$. Equivalently, there is a   $w_\mo\in\tilde{\MO}^1(\R P^\oo)$ mapping to $1$ in 
$\tilde{\MO}^1(\R P^\oo) \to \tilde{\MO}^1(\R P^1) \to \tilde{\MO}^1(S^1 ) \to \tilde{\MO}^0(*)$, defining  the   first universal Conner-Floyd Stiefel-Whitney class $\swf^\mo_1(L^\oo) := w_\mo \in \MO^1(\R P^\oo)$  of the tautological  line bundle $L^\oo\to \R P^\oo$. The AHSS shows that $\MO^*(\R P^\oo) = \MO^*[w].$
Pull-back by the classifying map and the Grothendieck construction yield a full spread of Conner-Floyd Stiefel-Whitney classes $\swf^\mo_k(\xi)\in \MO^k(X).$
Naturality holds for Whitney sum and pull-back, while the ring structure depends on a logarithm of the formal group law $$l_\mo(T) = \sum_{i\geq 0} s_i T^i $$ with $s_i \in \MO^*(pt)$ such that 
$\swf^\mo_1(l_\mo(\zeta_1 \otimes \zeta_2)) := \swf^\mo_1(\zeta_1) + \swf^\mo_1(\zeta_2)$
for real line bundles $\zeta_1, \zeta_2$ \cite{Quillen}.  The $s_i$ are readily determined. 

\begin{theorem} \label{MO theorem}
\begin{equation*}
\a = \a\pr\ {\rm in}\ \MO^*(X) \ \ \Longleftrightarrow \ \ \sw^{\,\mo}_J(\a)=\sw^{\,\mo}_J(\a\pr) \ \ {\rm in}\ \H^*(X,\Z_2) \ \ \forall \ J \subset \mathbb{N}^\infty.
\end{equation*}
\end{theorem}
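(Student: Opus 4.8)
The forward implication is already in hand: the proof of \thmref{pJf}, run with $\Z_2$-coefficients in place of $\Q$ and with untwisted coefficients in place of the twisted $\Z^{\sww}$ --- legitimate since every manifold is $\H\Z_2$-oriented, so no orientation bundle ever intervenes --- shows that $\sw^{\mo}_J:\MO^*(X)\to\H^*(X,\Z_2)$ is a well-defined group homomorphism, whence $\a=\a\pr$ forces $\sw^{\mo}_J(\a)=\sw^{\mo}_J(\a\pr)$ for every $J$. By linearity the converse amounts to showing that $\sw^{\mo}_J(\mu)=0$ for all $J\<\N^\oo$ implies $\mu=0$, where $\mu=D^{\mo}[f]$ is the Atiyah-Poincar\'{e} dual of some $[f,M]\in\MO_*(X)$ given by a map $f:M\to X$ from a closed manifold; any such $f$ is canonically vertically $\MO$- and $\H\Z_2$-oriented because every real bundle carries canonical $\MO$- and $\H\Z_2$-orientations.

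The plan is to transcribe the argument of \eqref{MU vert RR 2}--\eqref{MU vert RR 3} above, replacing $\MU^*\ox\Q$ by $\MO^*$ and $\Q$ by $\Z_2$. Take $\hh=\MO^*$, $\kk=\H^*(\,\cdot\,,\MO_*)$, and $\t=\ch^{\mo}$ the Chern--Dold character of $\MO$, i.e.\ the canonical ring isomorphism $\MO^*(N)\cong\H^*(N,\MO_*)$ coming from Thom's splitting of $\MO$ as a wedge of suspensions of $\H\Z_2$, normalised to be the identity on a point; this is the $\Z_2$-linear counterpart of \cite{Dyer} Cor.~4 and needs no rationalisation. One first checks that $\ch^{\mo}$ is a multiplicative transformation in the sense of \cite{Dyer} --- a ring map for each space which, after the normalisation, carries the suspended unit of $\hh^0(S^1,{\rm pt})$ to that of $\kk^0(S^1,{\rm pt})$ --- and that $\Vv^{\rm st}_f$ is both $\MO$- and $\kk$-oriented, so that \thmref{Thm RR} applies. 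Since $\ch^{\mo}(1)=1$, \eqref{vertical RR} taken with $m=1$ gives $\ch^{\mo}(f^{\mo}_!(1))=f^{\Hss}_!(\Td^{\mo}(-\Vv^{\rm st}_f))$, while exactly as in \propref{fMU1} one has $f^{\mo}_!(1)=D^{\mo}[f]=\mu$.

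It then remains to expand the $\MO$-Todd class in Stiefel--Whitney classes. From the logarithm $l_{\mo}(T)=\sum_{i\ge0}s_iT^i$ of the formal group law of $\MO$ (the $s_i\in\MO_*$ being readily computed) one reads off the Conner--Floyd identity $\ch^{\mo}(\swf^{\mo}_1(\xi))=\sum_{n\ge0}[N^n]\,\sw_{n+1}(\xi)$ in $\H^*(M,\MO_*)$, with $[N^0]=1$ --- the $\MO$-analogue of \eqref{Chern-Dold MU} --- and hence, by the splitting principle and stability (\lemref{stable t-Todd}), an expansion $\Td^{\mo}(-\Vv^{\rm st}_f)=\sum_J c_J\,[M_J]\,\sw_J(-\Vv^{\rm st}_f)$ with $c_J\in\Z_2$ and closed manifolds $M_J$. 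Applying the $\MO_*$-linear Umkehr $f^{\Hss}_!$, so that the scalars $[M_J]$ factor out, and using the definition of the vertical classes, one gets $\ch^{\mo}(\mu)=\sum_J c_J\,[M_J]\,\sw^{\mo}_J(\mu)$. Hence $\sw^{\mo}_J(\mu)=0$ for all $J$ forces $\ch^{\mo}(\mu)=0$, and since $\ch^{\mo}$ is injective on $\MO^*(X)$ we conclude $\mu=0$. (Alternatively the converse follows by the Conner--Floyd argument given earlier in this section: feeding $\om=c\cup\sw_{J\pr}(f^*\Tx)$ into the $\Z_2$-valued Kronecker pairing reduces the vanishing of all $\sw^{\mo}_J(\mu)$ to that of all unoriented Conner--Floyd characteristic numbers of $[f]$, hence to $[f]=0$ by \cite{Conner-Floyd1}; the orientability hypothesis needed there for $\MSO$ is vacuous in the $\MO$ setting, every manifold being $\H\Z_2$-oriented.)

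The main obstacle is the bookkeeping in the Todd-class expansion --- pinning down the precise polynomial shape of $\Td^{\mo}(-\Vv^{\rm st}_f)$ in the Stiefel--Whitney classes of $-\Vv^{\rm st}_f$ with coefficients in $\MO_*$, i.e.\ the $\MO$-version of Buh\v staber's formula, which turns on identifying $\ch^{\mo}(\swf^{\mo}_1)$ from the formal group logarithm $l_{\mo}$. The remaining ingredients --- that $\ch^{\mo}$ is a multiplicative transformation and an isomorphism, and that $f^{\mo}_!(1)=D^{\mo}[f]$ --- are formal, being carried over verbatim from the $\MU$ treatment above.
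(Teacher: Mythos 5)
Your proposal reproduces the paper's own argument essentially step for step: the forward direction is taken from \S3 with $\Z_2$ replacing $\Z$ and the twist dropped; the converse is obtained by specialising \thmref{Thm RR} to $\hh=\MO^*$, $\kk=\H^*(\,\cdot\,,\MO_*)$, $\t=\ch^{\mo}$, invoking $f^{\mo}_!(1)=D^{\mo}[f]$, and expanding $\Td^{\mo}(-\Vv^{\rm st}_f)$ in Stiefel--Whitney classes with $\MO_*$ coefficients via the formal group logarithm $l_{\mo}$ (the paper cites the Buh\v{s}taber argument for this, exactly as you sketch), then concluding from injectivity of $\ch^{\mo}$. Your parenthetical Conner--Floyd alternative is also the one the paper records earlier in the section as the ``same argument'' that proves the unoriented version, so both of your routes coincide with the paper's.
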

\begin{proof}  From  \cite{Quillen}  there is a canonical isomorphism 
\begin{equation}\label{Chern-Dold MO}
\MO^*(X) \too \H^*(X,  \Z_2) \ox \L^*_{\mo}
\end{equation}
 which reduces to the identity map on $X=pt$ and so  by Dold's theorem \cite{Dold} must coincide with the Chern-Dold character $\ch^{\mo}$. Concretely, the inverse to \eqref{Chern-Dold MO} is identified by Quillen to be the $\L^*_{\mo}$ extension of  the map $\H^*(X,  \Z_2)  \to  \MO^*(X) $ which sends an element of $\H^*(X,  \Z_2) $, identified with a real line bundle $L$, to $l_{\mo}(\sw^\mo_1(L))$.  Let  
 $\Td^\mo(-\Vv^{\rm st}_f) \in \H^*(X,  \Z_2) \ox \L^*_{\mo}$ be the associated $\MO$ Todd class. The identity $f^{\mo}_!(1) = D^{\mo}[f]$ holds just as in \eqref{fMU1} (via the forgetful functor) and hence from \eqref{vertical RR} we have that 
 $\ch^\mo (D^\mo[f])  = f^{\Hss}_! (\Td^\mo (-\Vv^{\rm st}_f)).$
 Entirely similar proofs to those of \cite{Buchstaber} (Lem 1.2 and latter part of Thm 1.4)
yield $$\Td^\mo(-\Vv^{\rm st}_f)  = \sum_J [M_J]\, \sw_J(-\Vv^{\rm st}_f)$$
for some $[M_J]\in \L^*_{\mo}$ (we do not need any precision on the coefficients).   Hence $\Leftarrow$ follows on applying the Umkehr map. The direction $\Rightarrow$ has been shown  in \S3.
\end{proof}

\vtwo

For oriented bordism, with
$\hh = \MSO^*\ox\Q, \kk = \H^*( \  \ , \L^*_{\mso} \ox\Q) ,  \t = \ch^{\mso},$
and $ \phi_{\mso, \xi}$ the Thom isomorphism defined by the homotopy class of the classifying map $M^\xi\to \MSO_n$, an application of  \thmref{Thm RR} gives in a similar way as for $\MU^*$
\begin{equation*}
\ch^{\mso} (D^{\mso}[f])  = f^{\Hss}_! (\Td^{\,\mso} (-\Vv^{\rm st}_f))
\end{equation*}
 where $\Td^{\,\mso} (\xi) := \phi_{\Hss, -\xi} \,  \ch^{\mso}\,   \phi_{\mso, -\xi} (1) \ \in \H^*( M , \L^*_{\mso} \ox\Q).$
\vtwo
 
Since $\K(M)\ox\Q \to \H^*( M , \L^*_{\mso} \ox\Q), \ \xi\mto \Td^{\,\mso}(\xi),$ is a characteristic class -- insofar as it satisfies Whitney sum additivity \eqref{T hom}  and pulls-back functorially (because Thom isomorphisms and the Chern-Dold character do so) -- then, by uniqueness of the Pontryagin classes, $\Td^{\,\mso} (-\Vv^{\rm st}_f)$ is of the form
\begin{equation}\label{Todd MSO cf}
\Td^{\,\mso} (-\Vv^{\rm st}_f) =  \sum_J b_J\,[M_J]\,\p_J(-\Vv^{\rm st}_f) 
\end{equation}
for some  $[M_J]\in \L^*_{\mso}$ and $b_J\in\Q$.
It thus follows as before that
$$\p^{\mso}_J(D^{\mso}[f])=0  \ \ \forall \ J \<\N^\oo \  \Too  \  D^{\muu} [f]=0 \ \iin  \ \MSO^*(X)\ox\Q,$$
completing the proof of Theorem 1. \\ 

In fact, the form of the expansion  \eqref{Todd MSO cf} can be seen directly using
$\Td^{\mso} (\Vv^{\rm st}_f)  = \Td^{\mso} (\Vv_{e_l\x f}) $
where for $l>>0$ the normal bundle $\Vv_{e_l\x f}$ is a stable representative and setting $\Vv : = \Vv_{e_l\x f}\ox\C$. Let 
$ u_{\mso} \in\tilde{\MSO}^{l-q}(X)$ be its Thom class. 
Define  the top $\MSO$ Conner-Floyd Pontryagin class of $\Vv$ as the Euler class 
\begin{equation*}
\pf^{\mso}_{l-q} (\Vv) := e^{\mso}(\Vv) := i_{\mso}^* u_{\mso}.  
\end{equation*}
Using this it is known  how to define a total $\MSO$ Conner-Floyd Pontryagin class $\pf^{\mso}(\Vv) = \sum_{j\geq 0} \pf^{\mso}_j(\Vv)$ in a natural way, via  Grothendieck's construction, such that  $\pf^{\mso}(\Vv\oplus \Vv\pr) = \pf^{\mso}(\Vv)\p^{\mso}(\Vv\pr)$ and $\pf^{\mso}(\Vv \oplus \ul n) = \pf^{\mso}(\Vv)$ over $\Q$, see \cite{Conner-Floyd2}, \cite{Quillen}.
 We have 
\begin{eqnarray*}
i_! \ch^{\mso}(\pf^{\mso}_{l-q}(\Vv^{\rm st}_f))  = i_! \ch^{\mso}(\p^{\mso}_{l-q}(\Vv)) &: = & i_!\ch^{\mso}(i_{\muu}^* u_\Vv ) \\ & = & i_! i^*_{\Hss} \,\ch^{\mso}(u_\Vv) \\
			  		   & = & \ch^{\mso}(u_\Vv) \cup u_\Vv \\	
					    & = & \phi_{\Hss} \phi_{\Hss}\ii (\ch^{\mso}(u_\Vv) )\cup u_\Vv \\	
					     & = & i_! (\Td^{\mso} (\Vv)) \cup u_\Vv \\			
 						& = & i_! (\Td^{\mso} (\Vv) \cup i^* u_\Vv )\\
						& = & i_! (\Td^{\mso} (\Vv) \cup \p_{l-q}(\Vv)),
\end{eqnarray*}
and so $\ch^{\mso}(\pf^{\mso}_{l-q}(\Vv)) = \Td^{\mso} (\Vv) \cup \p_{l-q}(\Vv)$. The result now follows from an expansion of the desired form of $ \ch^{\mso}(\pf_{l-q}(\Vv))$, which in turn holds as a consequence of the  splitting principle and the  expansion of $ \ch^{\mso}(\pf^{\mso} _1(\Vv))$  in classical Pontryagin classes, as in the complex case from  \eqref{Chern-Dold MU}, a fact which  is  a consequence of the formal group law on $\MSO^*$ (or more easily on $\MSp^*$) as in \cite{Buchstaber}.

\section{Proof of Theorem 2.} 

An oriented cobordism genus is a ring homomorphism 
$\MSO^*(X)\ox\Q \to  \H^*(X, \mathbb{Q}), $  and a complex genus is a ring homomorphism 
$\MU^*(X)\ox\Q \to  \H^*(X, \mathbb{Q}).$  
Just as in the classical case such objects are seen to be defined by multiplicative sequences. Recall, for the latter  one considers the algebra  $\mathcal{H}[z]= 1 + \sum_{j\leq 1}z^j \H^j(M, \mathbb{Q})$
of  polynomials $b(z)=1 + b_1z + b_2z^2 + \dots$ in a formal variable $z$ with  $b_j \in \H^j(M, \mathbb{Q})$, and an algebra endomorphism $\kappa$ of $\mathcal{H}[z]$. If the coefficients $\kappa_j(b_1, \dots, b_j)$ of 
\begin{align*}
\kappa(b(z))= 1 + \kappa_1(b_1)z + \kappa_2(b_1, b_2)z^2 + \dots 
\end{align*}
are polynomials, homogeneous relative to $b_j$ being assigned degree $j$, they are said to define a multiplicative sequence in view of the endomorphism property  of $\kappa$ 
\begin{align}\label{multiplicativityofendom}
\kappa(a(z)\cdot b(z))=\kappa(a(z))\cdot \kappa(b(z)).
\end{align}
A multiplicative sequence determines and is determined by a power series $g \in \mathbb{C}[[z]]$ via the correspondence $\kappa(1+z) = g(z)$ and 
$1+ b_1z + b_2z^2 + \dots=\Pi_{i=1}^M(1 + \beta_iz)$ with $ \beta_i \in \H^{k_i}(M, \mathbb{Q})$, from which (\ref{multiplicativityofendom}) gives
$\kappa(b(z))=\Pi_{i=1}^Mg(\beta_iz),$
identifying $b_j$ with the $i^\text{th}$ elementary symmetric function of $\beta_1, \dots, \beta_M$. On a closed $4m$-dimensional manifold $Y$ this is applied to its total Pontryagin class $1 + p_1 + \cdots + p_m$  to define the oriented genus 
$
\Phi_\kappa(Y)=\int_Y \kappa(p)= \int_Y \kappa_m(p_1, \dots, p_m) \ \in \Q.
$
Similarly, multiplicative sequences in Chern classes lead to complex genera. \\

Vertical genera are constructed in  the same way but using the Pontryagin class of the vertical stable normal bundle and integration over the fibre. Precisely: 

\begin{theorem}\label{vertical genera}
For each rational multiplicative sequence $\{\kappa_m\}$, in the notation of \eqref{pJ}, the correspondence 
\begin{eqnarray}\label{ring isom}
\om \to  \kappa([f^\om])&=& f^\om_! (\kappa(p(-\Vv^{\rm st}_{f^\om})))\nonumber\\[2mm]
&=&\sum_m\underbrace{f^\om_! \left(\kappa_m \left(p_1(-\Vv^{\rm st}_{f^\om}), \dots, p_m(-\Vv^{\rm st}_{f^\om})\right)\right)}_{\in \H^{4m-q}(X, \mathbb{Q})},\nonumber\\
\end{eqnarray}
 defines a vertical genus $ \MSO^*(X) \to  \H^*(X, \mathbb{Q})$
natural with respect to pull-back.
\end{theorem}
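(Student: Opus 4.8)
The plan is to verify the four properties that make \eqref{ring isom} a vertical genus -- independence of the representative $f^\om$, additivity, naturality with respect to pull-back, and multiplicativity (with $\kap(1)=1$) -- reducing everything except multiplicativity to results already in hand.

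First I would observe that, since $\{\kap_m\}$ is a rational multiplicative sequence, each $\kap_m$ is a universal homogeneous polynomial in $p_1,\dots,p_m$; expanding it in the monomial basis produces rational constants $c_J$ (depending only on $\{\kap_m\}$) with $\kap(p(\xi))=\sum_J c_J\,\p_J(\xi)$ for every stable bundle $\xi$. Hence
$$\kap([f^\om]) \;=\; \sum_J c_J\, f^\om_!\big(\p_J(-\Vv^{\rm st}_{f^\om})\big) \;=\; \sum_J c_J\, \p^{\mso}_J(\om),$$
the second equality being the definition \eqref{pJ}. By \thmref{pJf} each $\p^{\mso}_J$ is a well-defined homomorphism of abelian groups $\MSO^*(X)\ox\Q\to\H^*(X,\Q)$, so $\om\mto \kap(\om):=\kap([f^\om])$ is likewise well-defined and additive, the sum over $J$ being finite on a finite-dimensional $X$. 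Naturality then follows at once from \eqref{pJ pullback}: for $\phi:Y\to X$ one has $\phi^*\kap(\om)=\sum_J c_J\,\phi^*\p^{\mso}_J(\om)=\sum_J c_J\,\p^{\mso}_J(\phi^*\om)=\kap(\phi^*\om)$. For the unit, $1\in\MSO^0(X)$ is $D^{\mso}[\id_X]$ with $\id_X$ canonically vertically oriented and $\Vv^{\rm st}_{\id_X}=0$, so $\kap(p(0))=1$ and $(\id_X)_!(1)=1$, giving $\kap(1)=1$.

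The substantive step is multiplicativity, $\kap(\om\cup\om')=\kap(\om)\cup\kap(\om')$. Write $\om=D^{\mso}[f]$, $\om'=D^{\mso}[g]$ and, using that $\kap(\om)$ depends only on the bordism class, pick the representatives $f:M\to X$, $g:N\to X$ to be mutually transverse. By the compatibility of Atiyah--Poincar\'e duality with the fibre product displayed in \eqref{MSOduality}, $\om\cup\om'=D^{\mso}[f\xX g]$, with $f\xX g:\MxN\to X$ carrying the canonical vertical orientation of \propref{induced orientations}. By \eqref{Vv product}, $-\Vv^{\rm st}_{f\xX g}=-\mu^*\Vv^{\rm st}_f-\nu^*\Vv^{\rm st}_g$, so by rational Whitney multiplicativity of the total Pontryagin class and naturality, $p(-\Vv^{\rm st}_{f\xX g})=\mu^*p(-\Vv^{\rm st}_f)\cdot\nu^*p(-\Vv^{\rm st}_g)$; applying the algebra-endomorphism property \eqref{multiplicativityofendom} of $\kap$ together with its naturality gives $\kap(p(-\Vv^{\rm st}_{f\xX g}))=\mu^*A\cup\nu^*B$, where $A:=\kap(p(-\Vv^{\rm st}_f))\in\H^*(M,\Q)$ and $B:=\kap(p(-\Vv^{\rm st}_g))\in\H^*(N,\Q)$. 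Thus multiplicativity comes down to the identity $(f\xX g)_!(\mu^*A\cup\nu^*B)=f_!(A)\cup g_!(B)$ in the fibre-product square \eqref{fibre prod diag}.

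To finish I would prove this identity. Since $f\xX g=g\circ\nu$ and $\Vv^{\rm st}_{f\xX g}=\Vv^{\rm st}_\nu+\nu^*\Vv^{\rm st}_g$ (combine \eqref{Vv pull back} and \eqref{Vv product}), functoriality of the Umkehr map under composition -- immediate from the Pontryagin--Thom construction and this cocycle identity -- gives $(f\xX g)_!=g_!\circ\nu_!$. Viewing $\nu:\MxN\to N$ as the pull-back of $f$ along $g$ (the square \eqref{fibre prod diag} in the form \eqref{pullbacksquare}), the reciprocity formula \eqref{reciprocity} reads $\nu_!\circ\mu^*=g^*\circ f_!$, whence
$$\nu_!(\mu^*A\cup\nu^*B)=\nu_!\big(\nu^*B\cup\mu^*A\big)\stackrel{\eqref{umkehr cup}}{=}B\cup\nu_!(\mu^*A)=B\cup g^*f_!(A),$$
and therefore
$$(f\xX g)_!(\mu^*A\cup\nu^*B)=g_!\big(g^*f_!(A)\cup B\big)\stackrel{\eqref{umkehr cup}}{=}f_!(A)\cup g_!(B),$$
as required. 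The analogous statement for vertical stably complex genera is proved the same way, with $\p^{\mso}_J$, Pontryagin classes and \eqref{Vv product} replaced by $\c^{\muu}_J$, Chern classes and the corresponding Whitney sum formula. The only delicate points are the choice of transverse representatives within the bordism classes -- possible, and harmless by \thmref{pJf} -- and the bookkeeping of the Umkehr through the fibre-product square; everything else is formal once \thmref{pJf}, \propref{induced orientations}, \eqref{reciprocity} and \eqref{umkehr cup} are available, so I do not foresee a serious obstacle.
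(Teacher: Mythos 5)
Your proof is correct and follows essentially the same route as the paper's: well-definedness and additivity from \thmref{pJf}, the reduction of the ring product to the fibre-product representative via \eqref{MSOduality}, the decomposition $\Vv^{\rm st}_{f\xX g}=\mu^*\Vv^{\rm st}_f+\nu^*\Vv^{\rm st}_g$ from \propref{induced orientations}, and the identical chain $(f\xX g)_!=g_!\circ\nu_!$, \eqref{reciprocity}, \eqref{umkehr cup} for the key push-forward identity $(f\xX g)_!(\mu^*A\cup\nu^*B)=f_!(A)\cup g_!(B)$. The few places where you differ are improvements in presentation rather than substance: you make explicit the linear expansion $\kap=\sum_J c_J\,\p^{\mso}_J$ so that additivity and naturality drop straight out of \thmref{pJf} and \eqref{pJ pullback} (the paper instead re-derives naturality directly from \eqref{reciprocity}), you check preservation of the unit, and you flag that $(f\xX g)_!=g_!\circ\nu_!$ rests on the cocycle identity for stable normal bundles, a point the paper uses silently.
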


\vthree

Analogous statements hold for vertical complex genera in terms of Chern classes and vertical unoriented  genera $\MO^*(X) \to  \H^*(X, \Z_2)$ in terms of vertical Stiefel-Whitney classes.\\ 

\begin{proof} By \thmref{pJf} the map \eqref{ring isom} is a  vertical cobordism invariant.
From the commutativity of  \eqref{MSOduality}, the ring product of $\om^f = D^{\mso}[f], \,\om^g =  D^{\mso}[g] \in\MSO^*(X)$  is the element 
$$\om^{f\xX g} = D^{\mso}[f\xX g] \in\MSO^*(X).$$ 
From the identity $\Vv^{\rm st}_{f \times_{{\tiny X}} g}  = \mu^* \Vv^{\rm st}_f + \nu^* \Vv^{\rm st}_g$ 
of \propref{induced orientations}  for   transverse maps $f:M\to X$ and $g:N\to X$ defining a commutative diagram 
$$\begin{tikzcd}
\MxN \arrow{r}{\mu} \arrow[swap]{dr}[above]{\hfive \    f \times_{{\tiny X}} g}\arrow{d}{\nu} &  M\arrow{d}{f} \\
N \arrow{r}{g}   & X,
\end{tikzcd}$$
we have 
\begin{equation}\label{pontryagin product}
\p(-\Vv^{\rm st}_{f \times_{{\tiny X}} g})  = \mu^* \p(-\Vv^{\rm st}_f) \cup \nu^*\p(-\Vv^{\rm st}_g)
\end{equation}
in $\H^*(M\times_{{\tiny X}} N)$ and hence
\begin{equation*}
\kappa(\p(-\Vv^{\rm st}_{f \times_{{\tiny X}} g}))  = \mu^* \kappa(\p(-\Vv^{\rm st}_f)) \cup \nu^* \kappa(\p(-\Vv^{\rm st}_g)).
\end{equation*}
From $f \times_{{\tiny X}} g = g\circ \nu$  %$(f \times_{{\tiny X}} g)_! = g_!\circ \nu_!$, and so 
we have for $a\in \H^*(N,\Q), b\in \H^*(M,\Q)$
\begin{align*}
(f \times_{{\tiny X}} g)_! (\mu^* a\cup \nu^*  b) & = g_!\circ \nu_!(\mu^* a \cup \nu^* b)\\
& = g_! (\nu_! \mu^*a \cup  b)\\
& = g_!(g^*f_!a  \cup  b)\\
& = f_! a \cup g_! b
\end{align*}
using \eqref{reciprocity} for the third equality. Consequently,

\begin{align*}
(f \times_{{\tiny X}} g)_!(\kappa(\p(-\Vv^{\rm st}_{f \times_{{\tiny X}} g})))  
&= (f \times_{{\tiny X}} g)_!  (\mu^* \kappa(\p(-\Vv^{\rm st}_f)) \cup \nu^*\kappa(\p(-\Vv^{\rm st}_g)))  \\[2mm]
& = f_!(\kappa(-\p(\Vv^{\rm st}_f))) \cup g_!(\kappa(\p(-\Vv^{\rm st}_g))).
\end{align*}
\eqref{ring isom} hence defines a ring homomorphism, and so a vertical genus. \\

That the genus  \eqref{ring isom} is natural for pull-back is the equality 
$$ g^* (f_! (\kappa(p(-\Vv^{\rm st}_f))))= \mu_! (\kappa(p(- \Vv^{\rm st}_\mu))).$$ 
But by \eqref{reciprocity} we have $g^* \circ f_! = \mu_!\circ \nu^*$ from which the above identity follows on recalling 
 $\mu^*\Vv^{\rm st}_f = \Vv^{\rm st}_\nu$ and the functoriality of the Pontryagin classes. 
\end{proof}

\vthree 

\begin{ex}[Vertical $\hat{A}$-genus on fibrations]

Consider fibre bundles   $\pi: M \to  X$ and  $\pi': M' \to  X$ , with product 
\begin{equation*}
\begin{tikzcd}
M \times_X M'  \arrow{r}{b'} \arrow{dr}[dashed]{\pi\times_X \pi'} \arrow{d}{b} & M' \arrow{d}{\pi'} \\
M \arrow{r}{\pi}   & X.
\end{tikzcd}
\end{equation*}

Let $p^\pi= 1 + p_1^\pi + \dots$ be the Pontryagin class of the stable normal bundle  $-\Vv^{\rm st}_\pi $ of $\pi$. Since $\pi$ is a submersion there is a (non canonical) vector bundle isomorphism 
\begin{equation*}
T_M \cong T_\pi  + \pi^* T_X
\end{equation*}
where $T_\pi := \Ker (d\pi) $ is the tangent bundle along the fibres (the vertical tangent bundle). Thus 
  $$-\Vv^{\rm st}_\pi  = T_\pi$$
is an honest vector bundle and a vertical orientation is an orientation on each fibre $M_x$ in the usual sense, while  $ q = \dim \, M_x$. 
The $J^{{\rm th}}$ vertical Pontryagin class  is 
\begin{align*}
\p_J(\pi) :=\pi_!(\p_J(T_\pi)) \in \H^*(X, \mathbb{Q}).
\end{align*}
The $\hat{A}$-class on $M$ of  $-\Vv^{\rm st}_\pi  = T_\pi$ is a sum
\begin{align*}
\hat{A}^\pi = 1+ \hat{A}^\pi_4 + \hat{A}^\pi_8 + \dots \in \H^{4*}(M, \mathbb{Q})
\end{align*}
with 
\begin{align*}
\hat{A}^\pi_4 = - \frac{p_1^\pi}{24}, \qquad \hat{A}^\pi_8=\frac{7(p_1^\pi)-4p_2^\pi}{5760}, \qquad \dots 
\end{align*}
giving the vertical $\hat{A}$-genus in $\H^*(X,\Q)$
\begin{align*}
\hat{A}(\pi) :=\pi_!(\hat{A}^\pi) = \underbrace{\hat{A}_{4-q}(\pi)}_{\in \H^{4-q}(X, \mathbb{Q})} + \underbrace{\hat{A}_{8-q}(\pi)}_{\in \H^{8-q}(X, \mathbb{Q})} + \dots\ \ ,
\end{align*}
where
\begin{equation}\label{A8k-q}
\hat{A}_{4-q}(\pi)=\pi_!\left(- \frac{p_1^\pi}{24}\right), \qquad \hat{A}_{8-q}(\pi)=\pi_!\left(\frac{7(p_1^\pi)^2-4p_2^\pi}{5760}\right), \qquad \dots\ \ . 
\end{equation}
Thus
\begin{align*}
\hat{A}(\pi)\cdot\hat{A}(\pi)=\underbrace{\hat{A}_{4-q}(\pi)\hat{A}_{4-q'}(\pi')}_{\in \H^{8-(q+q')}(X, \mathbb{Q})} + \underbrace{\hat{A}_{4-q}(\pi)\hat{A}_{8-q'}(\pi')+ \hat{A}_{8-q}(\pi')\hat{A}_{4-q}(\pi)}_{\in \H^{12-(q+q')}(X, \mathbb{Q})} + \dots 
\end{align*}
which by (\ref{(0.22)}) is expected to coincide with
\begin{align*}
\hat{A}(\pi \times_X \pi')= \underbrace{\hat{A}_{8-(q+q')}(\pi \times_X \pi')}_{\in \H^{8-(q+q')}(X, \mathbb{Q})} + \underbrace{\hat{A}_{12-(q+q')}(\pi \times_X \pi')}_{\in \H^{12-(q+q')}(X, \mathbb{Q})}+ \dots \ \  . 
\end{align*}
To check the first of these equalities $\hat{A}_{8-(q+q')}(\pi \times_X \pi')= \hat{A}_{4-q}(\pi)\hat{A}_{4-q'}(\pi')$ note
\begin{align}\label{(1.23)}
\hat{A}_{4-q}(\pi)\hat{A}_{4-q'}(\pi')= \frac{1}{576}p_1(\pi)p_1(\pi')
\end{align}
from \eqref{A8k-q}, and 
\begin{align*}
\hat{A}_{8-(q+q')}(\pi\times_X \pi') = (\pi \times_X \pi')_!\left( \frac{7(p_1^{\pi \times_X \pi'})^2-4p_2^{\pi \times_X \pi'}}{5760}\right).
\end{align*}
But 
\begin{align*}
(p_1^{\pi \times \pi'})^2=(b^*p_1^* + (b')^* p_1^{\pi'})^2=b^*(p_1^\pi p_1^{\pi}) + (b')^*(p_1^{\pi'} p_1^{\pi'})+ 2 b^*p_1^\pi (b')^*p_1^{\pi'},
\end{align*}
and since $(\pi \times_X \pi')_!=(\pi \circ b)_!= \pi_! \circ b_!= \pi_!' \circ b_!'$ and $b_!b^*=0$, then 
$$
(\pi\times_X \pi')_!(p_1^{\pi \times \pi'})^2  = 2 \pi_! b_!(b^* p_1^\pi(b')^*p_1^{\pi'})
 = 2 p_1(\pi) p_1(\pi')
$$
by the same steps as in the proof of \thmref{vertical genera}. On the other hand, from \eqref{pontryagin product}
\begin{align*}
p_2^{\pi \times_X \pi'}= b^* p_2^\pi + (b')^*p_2^{\pi'} + b^* p_1^\pi (b')^* p_1^{\pi'},
\end{align*}
so
\begin{align*}
(\pi \times_X \pi')(p_2^{\pi \times_X \pi'})= \pi_! b_! (b^* p_1^\pi (b')^*p_1^{\pi'})= p_1(\pi)p_1(\pi').
\end{align*}
Thus
\begin{align*}
(\pi \times_X \pi')_!\left( \frac{7(p_1^{\pi \times_X \pi'})-4p_2^{\pi \times_X \pi'}}{5760}\right) & = \frac{7 \times 2p_1(\pi)p_1(\pi') - 4 p_1(\pi)p_1(\pi')}{5760}\\
& =\frac{1}{576}p_1(\pi)p_1(\pi'),
\end{align*}
which is \ref{(1.23)}.
\end{ex}

\vskip 8mm

{\small \textsc{Department of Mathematics, King's College
London.}}

\end{document}